\def\figurename{Figure} 
\renewcommand{\fnum@figure}[1]{\figurename~\thefigure.}
\def\tablename{Table} 
\renewcommand{\fnum@table}[1]{\tablename~\thetable.}
\newtheorem{theorem}{Theorem}[section]
\newtheorem{lemma}[theorem]{Lemma}
\theoremstyle{definition}
\newtheorem{definition}[theorem]{Definition}
\theoremstyle{remark}
\newtheorem{remark}[theorem]{Remark}
\numberwithin{equation}{section}
\begin{document}

\title{Stability equivalence for stochastic differential
equations, stochastic differential delay equations and their
corresponding Euler-Maruyama methods in $G$-framework
 \thanks{The work is supported
by the National Natural Science Foundation of China (11871076) and
the Natural Science Foundation of Shandong Province (ZR2017MA015).}}

\author{Wen Lu \footnote{E-mail: llcxw@163.com}
\\
   School of Mathematics and Information Sciences,
 \\ \small Yantai University, Yantai 264005, China  }
\date{}
\maketitle
\begin{abstract} In this paper, we investigate the stability equivalence problem for stochastic
differential delay equations, the auxiliary stochastic differential
equations and their corresponding Euler-Maruyama (EM) methods under
$G$-framework. More precisely, for $p\geq 2$, we prove the
equivalence of practical exponential stability in $p$-th moment
sense among stochastic differential delay equations driven by
$G$-Brownian motion ($G$-SDDEs), the auxiliary stochastic
differential equations driven by $G$-Brownian motion ($G$-SDEs), and
their corresponding Euler-Maruyama methods, provided the delay or
the step size is small enough. Thus, we can carry out careful
simulations to examine the practical exponential stability of the
underlying $G$-SDDE or $G$-SDE under some reasonable assumptions.
\end{abstract}

\vspace{.08in} \noindent \textbf{Keywords}  $G$-Brownian motion;
$G$-stochastic differential equation; $G$-stochastic differential
delay equations; Euler-Maruyama method; practical exponential
stability; equivalence

\vspace{.08in} \noindent \textbf{Mathematics Subject Classification}
 60H10; 60H35

\section{Introduction}
It is acknowledged that stochastic differential equations (SDEs) and
stochastic differential delay equations (SDDEs) are widely used in
many branches of science and engineering fields. As the vital
important characteristic of the dynamical systems, the stability of
SDEs or SDDEs has been studied intensively for the theories and
applications, one can see Hasminskii \cite{Hasm12}, Mao
\cite{Mao07}, Mao and Yuan \cite{MaoYuan06} and references therein
for more details.

Motivated by the potential applications in uncertainty problems,
risk measures and the super-hedging in finance, a nonlinear
framework named $G$-framework was established by Peng \cite{Peng07}.
Under this framework, Peng \cite{Peng07, Peng10} introduced the
$G$-expectation, $G$-Brownian motion and set up the associated
stochastic calculus of It\^{o}'s type. As a counterpart in the
classical Brownian motion framework, stochastic differential
equations driven by $G$-Brownian motion ($G$-SDEs) were firstly
introduced by  Peng \cite{Peng10}. Under the global Lipschitz
assumption, Peng \cite{Peng10} and Gao \cite{Gao09} have proved the
wellposedness of $G$-SDEs.  Comparing with the classical SDE, the
$G$-SDEs has wide applications owing to the model uncertainty. Since
then, many works have been done on $G$-expectation, $G$-Brownian
motion and the stability of the related $G$-SDEs, see Denis et al.
\cite{DHP11}, Hu and Peng \cite {HuPeng09}, Hu et al.
\cite{Hu14,Hu20}, Li et al. \cite{LLL16}, Li and Peng \cite {LiP11},
Ren \cite{ren17, ren18},  Soner et al. \cite{Soner11}, Song
\cite{Song11, Song12}, Zhang and Chen \cite{ZhCh12} and the
references therein.

Taking into account the time-delay influence in $G$-framework, Fei
et al. \cite{FFY19} firstly studied the existence and uniqueness and
the second kind of stability of the solutions to stochastic
differential delay equations driven by $G$-Brownian motion
($G$-SDDEs) under the local Lipschitz and the linear growth
conditions. Fei et al. \cite{FFMY22} investigated the
delay-dependent asymptotic stability for a class of highly nonlinear
$G$-SDDEs, Yao and Zong \cite{YZ20} discussed delay-dependent
stability of a class of stochastic delay systems driven by
$G$-Brownian motion. Besides, Zhu and Huang \cite{ZhuHuang20}
studied the $p$-th moment exponential stability for a class of
stochastic delay nonlinear systems by $G$-Brownian motion  by using
the stochastic delay feedback controls.

Since most SDEs or SDDEs cannot be solved explicitly due to its
nonlinearity, the numerical solution is introduced naturally. In the
classical framework, the numerical methods on stochastic
differential equations has been well established and we refer the
reader to, for example, \cite{Higham02}, \cite{Hu96},
\cite{Kloeden92},\cite{LMY19},
 \cite{Mao07}, \cite{Yuan06}and
references therein. Similarly, in the case of $G$-framework, for the
numerical scheme  of $G$-SDEs, see Li and Yang \cite{LiYang18}, Liu
and Lu \cite{LiuLu24}, Ullah and Faizullah \cite{RF17}, Yuan
\cite{Yhy21} and references therein for more details. It is well
known that the most powerful tool to judge the stability of the
dynamical systems is the so-called Lyapunov second method, which is
firstly established by Lyapunov in 1892. Using this method, one can
judge the stability of systems without knowledge of the solution of
the systems explicitly, and then this type stability of systems is
called the Lyapunov stability.

However, finding an appropriate Lyapunov function is not an easy
task in applications, therefore numerical methods become important.
This arises a natural problem: in the absence of Lyapunov function,
can we judge the stability of the systems and the corresponding
numerical methods from each other? In the classical framework, there
are a great deal of literatures to answer this problem positively.
For example, Higham et al. \cite{Higham03} initialized the study on
the equivalence of stabilities between the exact solutions of SDEs
and their numerical solutions, the authors in \cite{Higham03} proved
that underlying solutions are mean square exponential stable if and
only if numerical solutions are also exponential stable in the mean
square sense. Mao \cite{Mao2007} proved the similar equivalence
result to SDDEs under a global Lipschitz assumption. Furthermore,
Mao \cite{Mao2015} generalized the results in Higham et al.
\cite{Higham03} to the case of $p$-th moments. Liu et al.
\cite{LLD2018} investigated the equivalence of the mean square
exponential stability between the neutral delayed stochastic
differential equations and the Euler-Maruyama numerical scheme. More
recently, under the assumptions of convergence and boundedness on
the numerical methods, Bao et al. \cite{Bao21} established the
equivalence of $p$-th ($p>0$) moment stability between SDDEs and
their numerical methods. Very recently, under a global Lipschitz
assumption, Zhang et al. \cite{zhsongLiu23} investigated the
equivalence of the exponential stability in $p$-th moment sense
among the SDDEs, the auxiliary SDEs and their corresponding
Euler-Maruyama numerical scheme. In $G$-framework, Deng et al.
\cite{DFFM19} extended results in Mao \cite{Mao2007} to SDDEs driven
by $G$-Brownian motion. Yang and Li \cite{LiYang19} discussed the
equivalence of the $p$-th ($p\in (0, 1)$) moment exponential
stability between $G$-SDEs and the stochastic $\theta$-method.

As we know, a significant characteristic of the Lyapunov stability
is that the origin is an equilibrium point. However, there are many
systems where the origin is not an equilibrium point. In the case
that the origin is not an equilibrium point, it is still possible to
analyze the stability of the systems in a small neighborhood of the
origin, therefore the practical stability is introduced, see Sucec
\cite{Sucec87},  Lakshmikantham et al. \cite{LLM90}. Meanwhile,
owing to extensive applications in the qualitative behavior and
quantitative properties of systems, the practical stability has
attracted the attention of many researchers, seeCaraballo et al.
\cite{Caraballo14, {Caraballo17}}, Dlala and Hammami\cite{Dlala07},
Feng and Liu\cite{FengLiu94}, Yao et al. \cite{YLWW20}, Zhao
\cite{zhao08} and references therein. Moreover, in $G$-framework,
e.g., Zhu \cite{Zhu22} investigated the practical exponential
stability of a class of $G$-SDDEs via vector $G$-Lyapunov function.
Zhu et al. \cite{ZYL22} studied the global practical uniform
exponential stability for impulsive $G$-SDDEs.
 Caraballo et al. \cite{Caraballo23} obtained the practical stability with respect to
a part of the variables of $G$-SDEs.

We mention that, all the works about the stability equivalence
mentioned above were devoted to the exponential stability in the
mean square or $p$-th moment sense. To our best knowledge, there are
few works on the equivalence of the practical stability, especially
in the case of $G$-framework, it seems that Yuan and Zhu
\cite{YuanZhu24} is the only one. In \cite{YuanZhu24}, the authors
established the practical mean square stability equivalence between
the discrete and the continuous EM approximations for $G$-SDDE. In
addition to that, they also proved that the continuous EM
approximation shares the practical exponential stability of $G$-SDDE
in mean square sense.

Motivated by the previous literature, especially enlightened by the
recent work \cite{zhsongLiu23} due to Zhang et al., in this paper,
we shall study the equivalence of the practical stability among
$G$-SDDEs, the auxiliary $G$-SDEs and their EM methods in $p$-th
moment sense. More precisely,  we prove that, for any $p\geq 2$, the
practical exponential stability in $p$-th moment sense among
 $G$-SDDEs, $G$-SDEs, and their corresponding
EM methods can be deduced from each other, provided the time delay
or the step size is small enough. With such a theoretical result, in
the absence of Lyapunov function, the practical exponential
stability of $G$-SDDEs or $G$-SDEs can be affirmed by the numerical
simulation in practice. The rest of the paper is organized as
follows. Section 2 presents some notations and preliminaries for the
whole work. In Section 3, we establish the boundedness of the exact
and numerical solution for  $G$-SDDEs and $G$-SDEs. In Section 4, we
will present our main results, namely we will prove the equivalence
of practical exponential stability in $p$-th moment sense among
$G$-SDDEs, $G$-SDEs, and their corresponding EM methods. We conclude
the paper in the last section.

 \section{Notations and preliminaries}
Throughout this paper, unless otherwise specified, we use the
following notations.
 Let $\mathbb{R}=(-\infty, +\infty)$, $\mathbb{R}^{+}=[0, +\infty)$,  $a^{+}=\max\{0, a\}$ and $a^{-}=-\min\{0, a\}$ for
$a\in\mathbb{R}$. Let $|\cdot|$ be the Euclidean norm and
$\langle\cdot,\cdot\rangle$ be the scalar product in
$\mathbb{R}^{n}$. If $A$ is a vector or matrix, its transpose is
denoted by $A^{T}$. If $A$ is a matrix, its trace norm is denoted by
$|A|=\sqrt{\operatorname{trace}\left(A^{T} A\right)}$. If $x$ is a
real number, its integer part is denoted by $\lfloor x\rfloor$.

Let $\Omega$ be a given nonempty fundamental space and $\mathcal{H}$
a linear space of real functions defined on $\Omega$ such that (1)
$1 \in \mathcal{H} ;(2) \mathcal{H}$ is stable with respect to
bounded Lipschitz functions, i.e., for all $n \geq 1, X_1, \cdots,
X_n \in \mathcal{H}$ and $\varphi \in C_{b, lip
}\left(\mathbb{R}^n\right)$, it holds also $\varphi\left(X_1,
\cdots, X_n\right) \in \mathcal{H}$, where $C_{b, li
p}\left(\mathbb{R}^n\right)$ is the space of linear function
$\varphi: \mathbb{R}^n \rightarrow \mathbb{R}$, for $\forall x, y
\in \mathbb{R}^n$
$$
C_{b, lip }\left(\mathbb{R}^n\right):=\left\{\varphi \mid \exists C
\in \mathbb{R}^{+}, m \in N \text { s.t. }|\varphi(x)-\varphi(y)|
\leq C\left(1+|x|^m+|y|^m\right)|x-y|\right\}.
$$

\begin{definition}
A sublinear expectation $\mathbb{E}(\cdot)$ on $\mathcal{H}$ is a
functional $\mathbb{E}(\cdot): \mathcal{H} \rightarrow \mathbb{R}$
with the following properties: for each $X, Y \in \mathcal{H}$, we
have
\\\indent(1) \textbf{Monotonicity}: if $X \geq Y$, then $\mathbb{E}(X)
\geq \mathbb{E}(Y)$;
\\\indent(2) \textbf{Preservation of constants}:
$\mathbb{E}(c)=c$, for all $c \in \mathbb{R}$;
\\\indent(3)
\textbf{Sub-additivity}: $\mathbb{E}(X)-\mathbb{E}(Y) \leq
\mathbb{E}(X-Y)$;
\\\indent(4) \textbf{Positive homogeneity}:
$\mathbb{E}(\lambda X)=\lambda \mathbb{E}(X)$, for all $\lambda \in
\mathbb{R}^{+}$.
\end{definition}
The triple $(\Omega, \mathcal{H}, \mathbb{E})$ is called a sublinear
expectation space.
\begin{definition}In a sublinear expectation space $(\Omega,
\mathcal{H}, \mathbb{E})$, an $n$-dimensional random vector $Y=(Y_1,
. ., Y_n) \in \mathcal{H}^n$ is said to be independent from an
$m$-dimensional random vector $X=(X_1, . ., X_m) \in \mathcal{H}^m$
under the sublinear expectation $\mathbb{E}$, if for any test
function $\varphi \in C_{b, lip}\left(\mathbb{R}^{m+n}\right)$
$$
\mathbb{E}(\varphi(X,
Y))=\mathbb{E}\left(\left.\mathbb{E}(\varphi(x,
Y))\right|_{x=X}\right) .
$$
\end{definition}
\begin{definition} Let $X_1$ and $X_2$ be two $n$-dimensional random
vectors defined on sublinear expectation spaces $\left(\Omega_1,
\mathcal{H}_1, \mathbb{E}_1\right)$ and $\left(\Omega_2,
\mathcal{H}_2, \mathbb{E}_2\right)$, respectively. They are called
identically distributed, denoted by $X_1 \stackrel{d}{=} X_2$, if
$$
\mathbb{E}_1(\varphi(X_1))=\mathbb{E}_2(\varphi(X_2)), \quad \forall
\varphi \in C_{b, lip}(\mathbb{R}^n).
$$
$\bar{X}$ is said to be an independent copy of $X$ if $\bar{X}
\stackrel{d}{=} X$ and $\bar{X}$ is independent from $X$.
\end{definition}

\begin{definition} A $d$-dimensional random vector $X$ in a sublinear
expectation space $(\Omega, \mathcal{H}, \mathbb{E})$ is called
$G$-normal distributed if for each $\varphi \in C_{b,
lip}(\mathbb{R}^d)$,
$$
u(t, x):=\mathbb{E}(\varphi(x+\sqrt{t} X)), \quad t \in
\mathbb{R}^{+}, \quad x \in \mathbb{R}^d,
$$
is the viscosity solution to the following PDE defined on
$\mathbb{R}^{+} \times \mathbb{R}^d$ :
$$
\left\{\begin{array}{l}
\frac{\partial u}{\partial t}-G\left(D^2 u\right)=0 \\
\left.u\right|_{t=0}=\varphi,
\end{array}\right.
$$
where $G=G_X(A): \mathbb{S}^d \rightarrow \mathbb{R}$ is defined by
$$
G_X(A):=\frac{1}{2} \mathbb{E}((A X, X)),
$$
$D^2 u=\left(\partial_{x_i x_i}^2 u\right)_{i, j=1}^d$ and
$\mathbb{S}^d$ denoted the space of $d\times d$ symmetric matrices.
\end{definition}

Let $\Omega$ be the space of all $\mathbb{R}^d$-valued continuous
paths $\left(\omega_t\right)_{t \geq 0}$ with $\omega_0=0$. We
assume moreover that $\Omega$ is a metric space equipped with the
following distance:
$$
\rho\left(\omega^1, \omega^2\right):=\sum_{N=1}^{\infty}
2^{-N}\left(\max _{0 \leq t \leq
N}\left(\left|\omega_t^1-\omega_t^2\right|\right) \wedge 1\right),
$$
and consider the canonical process $B_t(\omega)=\omega_t, t \in
\mathbb{R}^{+}$, for $\omega \in \Omega$. For each $T\geq 0$, set
$$
L_{ip}^0\left(\Omega_T\right):=\{\varphi\left(B_{t_1}, B_{t_2},
\ldots, B_{t_n}\right): n \geq 1,0 \leq t_1 \leq \ldots \leq t_n
\leq T, \varphi \in C_{b, lip}(\mathbb{R}^{d \times n})\},
$$
where $C_{b, lip}(\mathbb{R}^{d \times n})$ denotes the space of
bounded and Lipschitz continuous functions on $\mathbb{R}^{d \times
n}$. Let $p \geq 1$,  for $T\in\mathbb{R}^+$, we define
$$
M_G^{p, 0}([0, T]):=\left\{\eta_t=\sum_{k=0}^{N-1} \xi_k
\mathbf{1}_{\left[t_k, t_{k+1}\right)}(t): \forall N\in \mathbb{N},
0=t_0<\cdots<t_N=T, \xi_k \in L_{i
p}^0\left(\Omega_{t_k}\right)\right\}
$$
and we denote by $M_G^p([0, T])$ the completion of $M_G^{p, 0}([0,
T])$ under the norm
$$\|\eta\|_{M_G^p([0, T])}:=\left(\mathbb{E}\int_0^T
\left|\eta_t\right|^p d t\right)^{\frac{1}{p}}.
$$

Now, let's introduce the definition of $G$-Brownian motion and
$G$-expectation.
\begin{definition} Let $(\Omega,
L_{ip}^0(\Omega_T), \mathbb{E})$ be a sublinear expectation space,
the sublinear expectation $\mathbb{E}(\cdot)$ is called
$G$-expectation if the canonical process $\left(B_t\right)_{t \geq
0}$ is  a
$G$-Brownian motion under $\mathbb{E}(\cdot)$ , that is: \\
(i) $B_0=0$; \\
(ii) for $t, s \geq 0$, the increment $B_{t+s}-B_t
\stackrel{\text{d}}{=} \sqrt{s} X$, where $X$ is $G$-normal
distributed;
\\ (iii) for $t, s
\geq 0$, the increment $B_{t+s}-B_t$ is independent from
$\left(B_{t_1}, B_{t_2}, \ldots, B_{t_n}\right)$ for each $n \in
\mathbb{N}$, and $0 \leq t_1 \leq t_2 \leq \ldots \leq t_n \leq t$.
\end{definition}
Let $\mathcal{H}_{t}$ be a filtration generated by $G$-Brownian
motion $(B(t))_{t \geq 0}$. Throughout this paper, , if not stated
otherwise, we set $p \geq 2$ , $\tau>0$ and denote by $C([-\tau, 0]
; \mathbb{R}^{n})$ the family of all continuous
$\mathbb{R}^{n}$-valued functions $\xi$ defined on $[-\tau, 0]$ with
the norm $\|\xi\|:=\sup _{-\tau \leq \theta \leq 0}|\xi(\theta)|$.
Let $L_{\mathcal{H}_{0}}^{p}\left([-\tau, 0] ;
\mathbb{R}^{n}\right)$ be the family of all
$\mathcal{H}_{0}$-measurable, $C([-\tau, 0]; \mathbb{R}^{n})$-valued
random variable $\xi=\{\xi(r): -\tau\leq r \leq 0\}$, such that
$\left\|\xi\right\|_{\mathbb{E}}^{p}:=\sup _{-\tau \leq r \leq 0}
\mathbb{E}|\xi(r)|^{p}<\infty$.
 If $x(t)$ is a continuous
$\mathbb{R}^{n}$-valued stochastic process on $t \in[-\tau,
\infty)$, we let $x_{t}:=\{x(t+\theta):-\tau \leq \theta \leq 0\}$
which is regarded as a $C\left([-\tau, 0] ;
\mathbb{R}^{n}\right)$-valued stochastic process.

In this article, we consider the following $G$-SDDE
 \begin{eqnarray}\label{eq:1}
d x(t)=f(x(t), x(t-\tau)) d t+g(x(t), x(t-\tau))d\langle B\rangle(t)
+h(x(t), x(t-\tau)) d B(t), \quad t \geq 0
\end{eqnarray}
where $f, g, h: \mathbb{R}^{n} \times \mathbb{R}^{n} \rightarrow
\mathbb{R}^{n}$, as well as $f, g, h \in M_{G}^{p}\left([-\tau, T] ;
\mathbb{R}^{n}\right), \forall T \geq 0$, and $B(t)$ be a
one-dimensional $G$-Brownian motion under the $G$-expectation space
$(\Omega, L_{ip}^0(\Omega_T), \mathbb{E}(\cdot))$ with
$G(a):=\frac{1}{2} \mathbb{E}\left(a
B(1)^{2}\right)=\frac{1}{2}\left({\bar\sigma}^{2}
a^{+}-{\underline\sigma}^{2} a^{-}\right)$, for $a \in \mathbb{R}$,
where ${\bar\sigma}^{2}=\mathbb{E}\left(B(1)^{2}\right)$,
${\underline\sigma}^{2}=-\mathbb{E}\left(-B(1)^{2}\right)$, $0\leq
\underline\sigma \leq\bar\sigma<\infty$. The quadratic variation
process of $G$-Brownian motion $B(t)$  is denoted by $\langle B
\rangle (t)$. The initial data $x_0=\xi=\{\xi(\theta):-\tau \leq
\theta \leq 0\} \in L_{\mathcal{H}_{0}}^{p}\left([-\tau, 0] ;
\mathbb{R}^{n}\right) $.
 For $G$-SDDE \eqref{eq:1}, we impose the
following assumptions:

\textbf{(H1)} The coefficients $f(\cdot, \cdot), g(\cdot, \cdot),
h(\cdot, \cdot): \mathbb{R}^{n} \times \mathbb{R}^{n} \rightarrow
\mathbb{R}^{n}$ are Borel measurable functions and satisfy global
Lipschitz condition, i.e., for each $x, x^{\prime},y, y^{\prime}\in
\mathbb{R}^{n}$, there exists a positive constant $L$ such that
$$
\left|f(x,y)-f(x^{\prime},y^{\prime})\right|^2\vee |
g(x,y)-g(x^{\prime},y^{\prime})|^2\vee |
h(x,y)-h(x^{\prime},y^{\prime})|^2 \leq
L(|x-x^{\prime}|^2+|y-y^{\prime}|^2).
$$
For the purpose of stability study in this paper, we assume that
$f(0,0)\neq 0$ or $h(0,0)\neq 0$ or $g(0,0)\neq 0$, i.e., the
$G$-SDDE \eqref{eq:1} does not exist the trivial solution $x\equiv
0$. In addition,  (H1) means the following linear growth condition
\begin{eqnarray}\label{lgc:2}
\left|f(x,y)\right|^2\vee | g(x,y)|^2\vee | h(x,y)|^2 \leq
\widehat{L}(1+|x|^2+|y|^2), \forall x, y\in \mathbb{R}^{n},
\end{eqnarray}
where $\widehat{L}=2\max\{L,
|f(0,0)|^2\vee|g(0,0)|^2\vee|h(0,0)|^2\}$. Under  (H1), the $G$-SDDE
\eqref{eq:1} admits a unique solution  on $T\geq -\tau$, see
\cite{FFY19}, we denote the solution by $x(t; 0, \xi)$. Similar to
the result in the classical Brownian motion framework, the solutions
to the $G$-SDDE \eqref{eq:1} have the following flow property:
$$x(t; 0, \xi)=x(t; s, x_s),\ \forall 0\leq s<t<\infty.$$
The auxiliary (nondelay) $G$-SDE for the $G$-SDDE \eqref{eq:1} is
\begin{eqnarray}\label{eq:2}
dy(t)=f(y(t), y(t)) dt+g(y(t), y(t))d\langle B\rangle(t) +h(y(t),
y(t))dB(t), \ t \geq 0,
\end{eqnarray}
where we define the initial value $y(0)=x(0)=\xi(0)$.

Let's now define the Euler-Maruyama (EM) approximate solutions to
the $G$-SDDE \eqref{eq:1} and its auxiliary  $G$-SDE \eqref{eq:2},
respectively. Given a step size $\Delta=\tau/m$ for a positive
integer $m$, let $t_n=n \Delta$ for $n \geq-m$. We set
$X_n=\xi(n\Delta), \forall -m\leq n \leq 0$ and $Y_0=\xi(0)$, then
the discrete EM solutions to the $G$-SDDE \eqref{eq:1} and the
$G$-SDE \eqref{eq:2} are defined by
\begin{eqnarray}\label{eq:d1}
X_{n+1}=X_{n}+f(X_{n}, X_{n-m})\Delta+g(X_{n}, X_{n-m})
\Delta\langle B\rangle_n+h(X_{n}, X_{n-m})\Delta B_n , \ n\geq 0
\end{eqnarray}
and
\begin{eqnarray}\label{eq:d2}
Y_{n+1}=Y_{n}+f(Y_{n}, Y_{n})\Delta+g(Y_{n}, Y_{n})\Delta\langle
B\rangle_n+h(Y_{n}, Y_{n})\Delta B_n, \ n\geq 0,
\end{eqnarray}
respectively, where $\Delta\langle B\rangle_n=\langle
B\rangle(t_{n+1})-\langle B\rangle(t_{n})$  and $\Delta
B_n=B(t_{n+1})-B(t_n)$.

Define $X_{\Delta}(t)=X_n$, for $t \in\left[t_n, t_{n+1}\right)$
with the initial value $X_{\Delta}(t)=\xi(t)$ on $[-\tau, 0]$. We
extend the discrete EM solution for the $G$-SDDE \eqref{eq:1} to the
continuous one by the following
\begin{eqnarray}\label{eq:3}
X(t)&=&\xi(0)+\int_0^t f(X_{\Delta}(r), X_{\Delta}(r-\tau))
dr+\int_0^t g(X_{\Delta}(r), X_{\Delta}(r-\tau)) d\langle
B\rangle(r)\nonumber
\\&&+\int_0^t h(X_{\Delta}(r),
X_{\Delta}(r-\tau))d B(r), \ t\geq 0.
\end{eqnarray}
Accordingly, the continuous EM approximate solution for the $G$-SDE
\eqref{eq:2} is defined by
\begin{eqnarray}\label{eq:4}
Y(t)&=&\xi(0)+\int_0^t f(Y_{\Delta}(r), Y_{\Delta}(r)) dr+\int_0^t
g(Y_{\Delta}(r), Y_{\Delta}(r)) d\langle B\rangle(r)\nonumber
\\&&+\int_0^t h(Y_{\Delta}(r), Y_{\Delta}(r))dB(r),\ t\geq 0,
\end{eqnarray}
where $Y_{\Delta}(t)=Y_n$, for $t \in\left[t_n, t_{n+1}\right)$. For
convenience, the equations \eqref{eq:3} and \eqref{eq:4} are
referred to as $G$-EMSDDE and $G$-EMSDE, respectively.

Let us now give the definitions of the $p$-th moment practical
exponential stability for the $G$-SDDE \eqref{eq:1}, the $G$-SDE
\eqref{eq:2} and their corresponding continuous EM methods.

\begin{definition} The $G$-SDDE \eqref{eq:1} and
$G$-SDE\eqref{eq:2} are said to be practically exponential stable in
$p$-th moment if there are positive numbers $M_1, \lambda_1, d_1$
and $M_2,\lambda_2, d_2$ such that
\begin{eqnarray}\label{def-pes-gsdde}
\mathbb{E}|x(t)|^p \leq M_1\|\xi\|_{\mathbb{E}}^p e^{-\lambda_1
t}+d_1,\ \forall t \geq 0
\end{eqnarray}
and
\begin{eqnarray}\label{def-pes-gsde}
\mathbb{E}|y(t)|^p \leq M_2 \mathbb{E}|\xi(0)|^p e^{-\lambda_2
t}+d_2,\ \forall t \geq 0
\end{eqnarray}
for all $\xi \in \mathcal{L}_{\mathcal{H}_0}^p\left([-\tau, 0] ;
\mathbb{R}^n\right)$.
\end{definition}
\begin{definition}
Given a step size $\Delta=\tau/m$ for a positive integer $m$, the
continuous EM methods $G$-EMSDDE \eqref{eq:3} and $G$-EMSDE
\eqref{eq:4} are said to be practically exponential stable in $p$-th
moment if there are positive numbers $L_1, \gamma_1, k_1$ and $L_2,
\gamma_2, k_2$ such that
\begin{eqnarray}\label{def-pes-EMgsdde}
\mathbb{E}|X(t)|^p \leq L_1\|\xi\|_{\mathbb{E}}^p e^{-\gamma_1
t}+k_1,\  \forall t \geq 0
\end{eqnarray}
and
\begin{eqnarray}\label{def-pes-EMgsde}
\mathbb{E}|Y(t)|^p \leq L_2 \mathbb{E}|\xi(0)|^p e^{-\gamma_2
t}+k_2,\ \forall t \geq 0
\end{eqnarray}
for all $\xi \in \mathcal{L}_{\mathcal{H}_0}^p\left([-\tau, 0] ;
\mathbb{R}^n\right)$.
\end{definition}
For more details of the following Burkholder-Davis-Gundy type
inequalities in the $G$-framework, see Gao \cite{Gao09} and Fei et.
al \cite{FFMY22}.
\begin{lemma}\label{lemma:2.1}
 For each $p \geq 1, \eta \in M_G^p([0, T])$ and $0 \leq
 t \leq T$, we have
$$
\mathbb{E}\left[\sup_{0\leq r \leq t}\left|\int_0^r \eta(s) d\langle
B\rangle(s)\right|^p\right] \leq \bar{\sigma}^{2p} t^{p-1} \int_0^t
\mathbb{E}\left(\left|\eta(s)\right|^p\right)ds.
$$
\end{lemma}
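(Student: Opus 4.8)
The plan is to reduce the statement to a purely pathwise estimate and to invoke the sublinear expectation only at the very last step. The engine of the argument is the defining feature of the quadratic variation of $G$-Brownian motion: since $\langle B\rangle$ is a nondecreasing process whose increments are squeezed between $\underline{\sigma}^2(r-s)$ and $\bar\sigma^2(r-s)$ (this is exactly what the choice $G(a)=\frac12(\bar\sigma^2 a^+-\underline\sigma^2 a^-)$ encodes), the random measure $d\langle B\rangle(s)$ is positive and is dominated by $\bar\sigma^2\,ds$ quasi-surely. I would first record the two working inequalities $\langle B\rangle(t)\le\bar\sigma^2 t$ and, for any nonnegative integrand $\phi$, $\int_0^t \phi(s)\,d\langle B\rangle(s)\le \bar\sigma^2\int_0^t \phi(s)\,ds$.

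First I would dispose of the supremum in $r$. Because $d\langle B\rangle$ is a positive measure, the triangle inequality for Stieltjes integrals gives, for every $r\le t$,
$$
\left|\int_0^r \eta(s)\,d\langle B\rangle(s)\right|\le \int_0^r|\eta(s)|\,d\langle B\rangle(s)\le \int_0^t|\eta(s)|\,d\langle B\rangle(s),
$$
the last step using that the integral of a nonnegative function over a growing domain is nondecreasing. Since the right-hand side no longer depends on $r$, taking the supremum over $r\in[0,t]$ costs nothing and leaves $\int_0^t|\eta(s)|\,d\langle B\rangle(s)$.

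Next I would apply Hölder's inequality with exponents $p$ and $p/(p-1)$ against the positive measure $d\langle B\rangle$ on $[0,t]$, writing $|\eta(s)|=|\eta(s)|\cdot 1$, which yields
$$
\left(\int_0^t|\eta(s)|\,d\langle B\rangle(s)\right)^p\le \bigl(\langle B\rangle(t)\bigr)^{p-1}\int_0^t|\eta(s)|^p\,d\langle B\rangle(s).
$$
Bounding $\langle B\rangle(t)\le\bar\sigma^2 t$ in the first factor and using $d\langle B\rangle(s)\le\bar\sigma^2\,ds$ in the second turns the right-hand side into $(\bar\sigma^2 t)^{p-1}\,\bar\sigma^2\int_0^t|\eta(s)|^p\,ds=\bar\sigma^{2p}t^{p-1}\int_0^t|\eta(s)|^p\,ds$, which already carries the exact constant claimed.

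Finally I would apply $\mathbb{E}$ to both sides. The only genuinely $G$-specific point is that the sublinear expectation does not commute with the time integral, but sub-additivity and positive homogeneity yield the one-sided inequality $\mathbb{E}\int_0^t|\eta(s)|^p\,ds\le \int_0^t\mathbb{E}|\eta(s)|^p\,ds$ (approximate the integral by Riemann sums, push $\mathbb{E}$ through the sum by sub-additivity, and pass to the limit), which is precisely the direction needed to reach the stated right-hand side. For $\eta\in M_G^{p,0}([0,T])$ all the integrals above are elementary and every inequality is immediate; the general case $\eta\in M_G^p([0,T])$ then follows by density in the $M_G^p$-norm. I expect the main obstacle to be the careful bookkeeping around these two $G$-framework subtleties, namely the quasi-sure domination $d\langle B\rangle\le\bar\sigma^2\,ds$ and the one-sided interchange of $\mathbb{E}$ with $\int_0^t$, rather than the Hölder estimate, which is entirely classical.
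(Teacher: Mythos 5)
Your argument is correct and is essentially the standard proof of this inequality: the paper itself states Lemma \ref{lemma:2.1} without proof, citing Gao \cite{Gao09} and Fei et al.\ \cite{FFMY22}, and the proof given there rests on exactly the two ingredients you isolate, namely the quasi-sure domination $d\langle B\rangle(s)\le\bar{\sigma}^{2}\,ds$ of the quadratic variation followed by H\"{o}lder's inequality, with the sub-additivity of $\mathbb{E}$ used at the end to pass the expectation inside the time integral. The only cosmetic difference from the usual presentation is that you apply H\"{o}lder against the measure $d\langle B\rangle$ before invoking the domination, rather than dominating first and applying H\"{o}lder against Lebesgue measure; both orderings yield the same constant $\bar{\sigma}^{2p}t^{p-1}$.
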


\begin{lemma}\label{lemma:2.2}
Let $ p>0, \eta\in M_G^p([0, T])$ and $0 \leq t \leq T$. Then,
$$
\mathbb{E}\left[\sup_{0 \leq r \leq t}\left|\int_0^r \eta(s)d
B(s)\right|^p\right] \leq C(p) \bar{\sigma}^p
\mathbb{E}\left[\left(\int_0^t \left|\eta(s)\right|^2
ds\right)^{\frac{p}{2}}\right].
$$
Especially, for $p\geq 2$,
$$
\mathbb{E}\left[\sup _{0 \leq r \leq t}\left|\int_0^r \eta(s) d
B_s\right|^p\right] \leq C(p)\bar{\sigma}^p   t^{\frac{p}{2}-1}
\int_0^t \mathbb{E}\left(\left|\eta(s)\right|^p\right)ds,
$$
where  the positive constant $C(p)$ is defined as follows
\begin{eqnarray*}
C(p)=\left\{ \begin{aligned}
        &\left(\frac{p^{p+1}}{2(p-1)^{p-1}}\right)^{\frac{p}{2}},\  p\geq 2, \\
                  &\left(\frac{32}{p}\right)^{\frac{p}{2}}, \ 0<p<2.
                          \end{aligned}\right.
                          \end{eqnarray*}
\end{lemma}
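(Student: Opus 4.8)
The plan is to obtain the first (general) inequality as the $G$-analogue of the classical Burkholder-Davis-Gundy estimate for continuous martingales, and then to derive the ``especially'' case $p\geq 2$ from it by a purely deterministic Hölder estimate in the time variable, followed by a Fubini-type passage of the expectation inside the $dt$-integral.

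For the first inequality I would exploit the representation of the $G$-expectation as an upper expectation. By the Denis-Hu-Peng representation there is a weakly compact family $\mathcal{P}$ of probability measures on $\Omega$ with $\mathbb{E}(X)=\sup_{P\in\mathcal{P}}E_P(X)$, and under each $P\in\mathcal{P}$ the canonical process $B$ is a continuous martingale whose quadratic variation satisfies $\underline{\sigma}^2\,ds\leq d\langle B\rangle_s\leq \bar{\sigma}^2\,ds$, $P$-a.s. The structural fact imported from Gao \cite{Gao09} is that, for $\eta\in M_G^p([0,T])$, the intrinsic $G$-Itô integral $\int_0^r\eta(s)\,dB(s)$ coincides $P$-a.s. with the classical Itô integral of $\eta$ against $B$ under $P$, simultaneously for all $r\in[0,t]$ by path-continuity. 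Hence, for fixed $P$ the process $M_r:=\int_0^r\eta\,dB$ is a continuous $P$-martingale, and the classical BDG inequality with the explicit constant $C(p)$ quoted in the statement gives
$$
E_P\Big[\sup_{0\leq r\leq t}|M_r|^p\Big]\leq C(p)\,E_P\Big[\Big(\int_0^t|\eta(s)|^2\,d\langle B\rangle_s\Big)^{p/2}\Big].
$$
Using $d\langle B\rangle_s\leq\bar{\sigma}^2\,ds$ pulls out a factor $(\bar{\sigma}^2)^{p/2}=\bar{\sigma}^p$, so that $E_P[\sup_{r\leq t}|M_r|^p]\leq C(p)\bar{\sigma}^p\,E_P[(\int_0^t|\eta|^2ds)^{p/2}]$. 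Since this holds for every $P\in\mathcal{P}$, I take the supremum over $P$ on both sides and invoke the representation once more to land on the claimed $G$-inequality.

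For the case $p\geq 2$ I would start from this and estimate the right-hand side pathwise. Hölder's inequality on $[0,t]$ with exponents $p/2$ and $p/(p-2)$ yields $\int_0^t|\eta(s)|^2ds\leq t^{1-2/p}\big(\int_0^t|\eta(s)|^p ds\big)^{2/p}$, whence $\big(\int_0^t|\eta(s)|^2ds\big)^{p/2}\leq t^{p/2-1}\int_0^t|\eta(s)|^p ds$. Applying the monotone sublinear expectation and the Fubini-type sub-additivity $\mathbb{E}\int_0^t|\eta(s)|^p ds\leq \int_0^t\mathbb{E}|\eta(s)|^p ds$ (which follows from property (3) of the $G$-expectation by discretising the time integral and passing to the limit) converts the first inequality into
$$
\mathbb{E}\Big[\sup_{0\leq r\leq t}\Big|\int_0^r\eta(s)\,dB_s\Big|^p\Big]\leq C(p)\bar{\sigma}^p\, t^{p/2-1}\int_0^t\mathbb{E}\big(|\eta(s)|^p\big)\,ds,
$$
as asserted.

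The main obstacle is the first inequality, and within it the rigorous identification, under each $P\in\mathcal{P}$, of the intrinsic $G$-stochastic integral with the classical Itô integral, together with the almost-sure density bound on $\langle B\rangle$ and the transfer of the constant $C(p)$; this is precisely the content imported from \cite{Gao09,FFMY22}. Once that identification and the upper-expectation representation are available, both the carry-over of the classical constant and the Hölder reduction to the $p\geq 2$ form are routine. A more self-contained route would avoid the representation theorem by applying the $G$-Itô formula to $|M_r|^p$ to control $\mathbb{E}|M_t|^p$ and then inserting the supremum via a $G$-martingale Doob inequality; however, the $G$-Doob inequality is itself normally proved through the same upper-expectation representation, so the argument above is the most economical.
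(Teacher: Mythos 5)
The paper does not prove this lemma at all: it is stated as a quoted auxiliary result, with the reader referred to Gao \cite{Gao09} and Fei et al. \cite{FFMY22}, so there is no internal proof to compare yours against. Your reconstruction is nevertheless correct and is essentially the standard route to these $G$-BDG inequalities. The representation $\mathbb{E}=\sup_{P\in\mathcal{P}}E_P$, the quasi-sure bound $d\langle B\rangle_s\leq\bar{\sigma}^2\,ds$, and the $P$-a.s. identification of the $G$-It\^{o} integral with the classical It\^{o} integral under each $P$ together reduce the first inequality to the classical Burkholder--Davis--Gundy estimate with Mao's explicit constants $\left(\frac{p^{p+1}}{2(p-1)^{p-1}}\right)^{p/2}$ for $p\geq 2$ and $(32/p)^{p/2}$ for $0<p<2$, and the factor $\bar{\sigma}^p=(\bar{\sigma}^2)^{p/2}$ comes out exactly as you describe. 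The passage to the $p\geq2$ form via H\"{o}lder with exponents $p/2$ and $p/(p-2)$, followed by the sub-Fubini inequality $\mathbb{E}\int_0^t|\eta(s)|^p\,ds\leq\int_0^t\mathbb{E}|\eta(s)|^p\,ds$ (immediate from the representation, since $E_P$ commutes with the time integral and one then takes suprema), is routine and correctly executed. You rightly identify the only genuinely delicate point, namely that the intrinsic $G$-stochastic integral of an $M_G^p$ integrand agrees with the classical one simultaneously for all $r$ under each $P$; that identification (via approximation by simple processes and path continuity) is precisely the content supplied by the cited references, so flagging it as imported rather than reproved is appropriate here.
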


\section{Boundedness of $p$-th moment }

From now on, for given $s\geq0$, we will use $x(t; s, \eta)$ to
denote the solution to the $G$-SDDE \eqref{eq:1} with initial data
$x_s=\eta=\{\eta(s+r),-\tau \leq r \leq 0\} \in
\mathcal{L}_{\mathcal{H}_s}^p\left([-\tau, 0] ;
\mathbb{R}^n\right)$. Note that the initial data $s$ and $\eta$ will
be chosen appropriately in our proofs, in particular, if $s=0$, we
take $\eta=\xi$. Let's establish the $p$-th moment boundedness for
the $G$-SDDE \eqref{eq:1},  $G$-SDE \eqref{eq:2} and their
continuous EM methods, respectively.

\begin{lemma}\label{lemma:3.1}  Let $x$ (resp. $X$) be the solution for the $G$-SDDE \eqref{eq:1}
(resp.  $G$-EMSDDE \eqref{eq:2}) with initial data $\eta$, under
 (H1), for any $p\geq 2$, it holds that
$$
\begin{aligned}
&\sup_{s-\tau\leq t\leq T}\mathbb{E}|x(t; s, \eta)|^p\vee
\sup_{s-\tau\leq t\leq T}\mathbb{E}|X(t; s, \eta)|^p\\&\leq
\left[(\widehat{L}+\bar{\sigma}^2p\widehat{L})(T-s)+
(1+\widehat{L}\tau+\bar{\sigma}^2p\widehat{L}\tau)\|\eta\|_{\mathbb{E}}^{p}\right]
e^{\frac{p+3p\widehat{L}-2\widehat{L}+\bar{\sigma}^2(p+2p^2\widehat{L}-p\widehat{L})}{2}(T-s)},
\forall T>s\geq 0.
\end{aligned}
$$
\end{lemma}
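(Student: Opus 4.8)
The plan is to establish the stated moment bound simultaneously for the exact solution $x$ and the continuous EM solution $X$ by applying It\^o's formula to $|x(t)|^p$ (respectively writing out $X(t)$ via the integral representation \eqref{eq:3}) and then setting up a Gronwall-type inequality. First I would fix $T>s\geq 0$ and apply the $G$-It\^o formula to the function $V(z)=|z|^p$ along the solution of \eqref{eq:1} started from $\eta$ at time $s$. This produces a drift term coming from $f$, a term involving $g\,d\langle B\rangle$, a term involving the second-order correction against $d\langle B\rangle$ (the Hessian of $|z|^p$ paired with $h\,h^{T}$), and a $G$-martingale term $\int h\cdot dB$ whose $\mathbb{E}$-expectation is nonpositive and can be discarded after taking sublinear expectations. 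The exponent $\frac{p+3p\widehat L-2\widehat L+\bar\sigma^2(p+2p^2\widehat L-p\widehat L)}{2}$ in the statement is exactly what one expects to read off from collecting these contributions, so the bookkeeping is guided by the target constant.

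Next I would bound each term using the linear growth condition \eqref{lgc:2} together with the elementary inequality $|z|^{p-1}\le 1+|z|^{p}$ (to absorb the cross terms $|z|^{p-1}|f(z,\cdot)|$ into a constant plus a multiple of $|z|^p$), and the Hessian estimate $|D^2|z|^p|\le p(p-1)|z|^{p-2}$ controlling the $h\,h^{T}$ correction. Here the factor $\bar\sigma^2$ enters because for the $G$-framework the quadratic variation satisfies $d\langle B\rangle(t)\le\bar\sigma^2\,dt$, so every $d\langle B\rangle$ integral is dominated by $\bar\sigma^2$ times the corresponding $dt$ integral; this is the mechanism producing the $\bar\sigma^2$-weighted terms in both the additive and the exponential constants. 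After these substitutions one arrives at an inequality of the schematic form
\begin{eqnarray*}
\mathbb{E}|x(t)|^p\le \mathbb{E}|\eta(0)|^p + C_1(T-s) + C_2\int_s^t\bigl(\mathbb{E}|x(r)|^p+\mathbb{E}|x(r-\tau)|^p\bigr)\,dr,
\end{eqnarray*}
where $C_1$ and $C_2$ are explicit constants built from $\widehat L$, $p$ and $\bar\sigma^2$.

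The delayed term $\mathbb{E}|x(r-\tau)|^p$ is the one subtlety: I would handle it by taking the supremum over $[s-\tau,t]$ on the left, using that on the initial window $[s-\tau,s]$ the process equals $\eta$ so $\mathbb{E}|x(r-\tau)|^p\le\|\eta\|_{\mathbb{E}}^p$, and that for $r-\tau\ge s$ the delayed supremum is bounded by the running supremum itself. This converts the delayed integral into a contribution of the form $C_2\tau\|\eta\|_{\mathbb{E}}^p$ (explaining the $\widehat L\tau$ and $\bar\sigma^2 p\widehat L\tau$ terms multiplying $\|\eta\|_{\mathbb{E}}^p$) plus $2C_2\int_s^t\sup_{s-\tau\le u\le r}\mathbb{E}|x(u)|^p\,dr$, and then the classical Gronwall inequality yields the exponential factor. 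For the EM solution $X$ the identical argument applies to \eqref{eq:3} using Lemmas \ref{lemma:2.1} and \ref{lemma:2.2} for the $d\langle B\rangle$ and $dB$ integrals, noting that the step-function arguments $X_\Delta(r)$ satisfy the same growth bound and that $\sup_{s-\tau\le u\le r}\mathbb{E}|X_\Delta(u)|^p\le\sup_{s-\tau\le u\le r}\mathbb{E}|X(u)|^p$. I expect the main obstacle to be purely organizational rather than conceptual: carefully tracking all the $p$-dependent and $\bar\sigma^2$-dependent constants so that they assemble into precisely the exponent and prefactors claimed, and in particular justifying the interchange of supremum and expectation in the $G$-expectation setting, for which the Burkholder--Davis--Gundy inequalities of Lemmas \ref{lemma:2.1}--\ref{lemma:2.2} are exactly the required tool.
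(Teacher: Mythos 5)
Your proposal is correct and follows essentially the same route as the paper: apply the $G$-It\^{o} formula to $|x(t)|^p$, drop the $dB$ term under $\mathbb{E}$, dominate the $d\langle B\rangle$ integrals by $\bar{\sigma}^2$ times $dt$ integrals via Lemma \ref{lemma:2.1}, absorb the delayed integral by splitting off the initial window $[s-\tau,s]$ (yielding the $\tau\|\eta\|_{\mathbb{E}}^p$ term), and conclude with Gronwall; the same computation is then repeated for $X$. The only cosmetic differences are that you run Gronwall on the running supremum rather than on $\mathbb{E}|x(t)|^p$ directly after a change of variables, and that no interchange of supremum and expectation is actually needed since the lemma bounds $\sup_t\mathbb{E}|x(t)|^p$ rather than $\mathbb{E}\sup_t|x(t)|^p$.
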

\begin{proof} Write $x(t; s, \eta)=x(t)$, by the $G$-It\^{o}'s
formula, we have
$$
\begin{aligned}
d|x(r)|^p= & p|x(r)|^{p-2}\langle x(r), f(x(r), x(r-\tau))\rangle d
r +\left(\frac{p(p-2)}{2} |x(r)|^{p-4}|\langle x(r), h(x(r),
x(r-\tau))\rangle|^2\right.
 \\
& \left.+\frac{p}{2} |x(r)|^{p-2}|h(x(r), x(r-\tau))|^2 + p|x(r)|^{p-2}\langle x(r), g(x(r), x(r-\tau))\rangle \right) d\langle B\rangle(r) \\
& +p|x(r)|^{p-2}\langle x(r), h(x(r), x(r-\tau))\rangle dB(r).
\end{aligned}
$$
Using the elementary inequality and Lemma \ref{lemma:2.1}, we obtain
$$
\begin{aligned}
&\mathbb{E}|x(t)|^p-\mathbb{E}|x(s)|^p
 \\
 \leq &\frac{p}{2}\mathbb{E}\int_s^t|x(r)|^{p-2}(|x(r)|^2+|f(x(r), x(r-\tau))|^2) dr \\
& +\bar{\sigma}^2 \mathbb{E} \left[\int_s^t\left(\frac{p(p-2)}{2} |x(r)|^{p-4}|x(r)|^2|h(x(r), x(r-\tau))|^2\right.\right.\\
& \left.\left.+\frac{p}{2} |x(r)|^{p-2}|h(x(r),
x(r-\tau))|^2+\frac{p}{2}|x(r)|^{p-2}(|x(r)|^2+|g(x(r),
x(r-\tau))|^2)\right)dr\right].
\end{aligned}
$$
By \eqref{lgc:2} and the H\"{o}lder inequality, we derive that
$$
\begin{aligned}
\mathbb{E}|x(t)|^p-\mathbb{E}|x(s)|^p
\leq & \frac{p(1+\widehat{L})+\bar{\sigma}^2(p+p^2\widehat{L})}{2}\mathbb{E}\int_s^t|x(r)|^{p}dr\\
& +
\frac{p\widehat{L}+\bar{\sigma}^2p^2\widehat{L}}{2}\left(\mathbb{E}\int_s^t|x(r)|^{p-2}dr
+\mathbb{E}\int_s^t|x(r)|^{p-2}|x(r-\tau)|^2dr\right)
\\
  \leq &(\widehat{L}+\bar{\sigma}^2p\widehat{L})(t-s)
 +\frac{p+3p\widehat{L}-4\widehat{L}+\bar{\sigma}^2(p+2p^2\widehat{L}-3p\widehat{L})}{2}\mathbb{E}\int_s^t|x(r)|^{p}dr \\
& +
\tau(\widehat{L}+\bar{\sigma}^2p\widehat{L})\|\eta\|_{\mathbb{E}}^{p}
+(\widehat{L}+\bar{\sigma}^2p\widehat{L})\mathbb{E}\int_{s}^t|x(r)|^pdr\\
\leq &  (\widehat{L}+\bar{\sigma}^2p\widehat{L})(t-s)+
\tau(\widehat{L}+\bar{\sigma}^2p\widehat{L})\|\eta\|_{\mathbb{E}}^{p}
\\&
+\frac{p+3p\widehat{L}-2\widehat{L}+\bar{\sigma}^2(p+2p^2\widehat{L}-p\widehat{L})}{2}\mathbb{E}\int_s^t|x(r)|^{p}dr.
\end{aligned}
$$
Applying the Gronwall inequality yields
\begin{eqnarray}\label{ineq3.1}
\mathbb{E}|x(t)|^p\leq
\left[(\widehat{L}+\bar{\sigma}^2p\widehat{L})(t-s)+
(1+\widehat{L}\tau+\bar{\sigma}^2p\widehat{L}\tau)\|\eta\|_{\mathbb{E}}^{p}\right]
e^{\frac{p+3p\widehat{L}-2\widehat{L}+\bar{\sigma}^2(p+2p^2\widehat{L}-p\widehat{L})}{2}(t-s)}.
\end{eqnarray}
Thus,
$$
\begin{aligned}
\sup_{s-\tau\leq t\leq T}\mathbb{E}|x(t)|^p\leq&
\left[(\widehat{L}+\bar{\sigma}^2p\widehat{L})(T-s)+
(1+\widehat{L}\tau+\bar{\sigma}^2p\widehat{L}\tau)\|\eta\|_{\mathbb{E}}^{p}\right]
e^{\frac{p+3p\widehat{L}-2\widehat{L}+\bar{\sigma}^2(p+2p^2\widehat{L}-p\widehat{L})}{2}(T-s)}.
\end{aligned}
$$
Similarly, write $X(t; s, \eta)=X(t)$, we can show that
$$
\begin{aligned}
\sup_{s-\tau\leq t\leq T}\mathbb{E}|X(t)|^p\leq&
\left[(\widehat{L}+\bar{\sigma}^2p\widehat{L})(T-s)+
(1+\widehat{L}\tau+\bar{\sigma}^2p\widehat{L}\tau)\|\eta\|_{\mathbb{E}}^{p}\right]
e^{\frac{p+3p\widehat{L}-2\widehat{L}+\bar{\sigma}^2(p+2p^2\widehat{L}-p\widehat{L})}{2}(T-s)}.
\end{aligned}
$$
 The proof is complete.
\end{proof}
\begin{lemma}\label{lemma:3.2}
Let $y$ (resp. $Y$) be the solution for the $G$-SDE \eqref{eq:3}
(resp.  $G$-EMSDE \eqref{eq:4}) with initial data $\eta(s)$, under
(H1), for any $p\geq 2$, it holds that
$$
\begin{aligned}
&\sup_{s\leq t\leq T}\mathbb{E}|y(t; s, \eta(s))|^p \vee \sup_{s\leq
t\leq T}\mathbb{E}|Y(t; s, \eta(s))|^p \\&
\leq\left[\mathbb{E}|\eta(s)|^p+(\widehat{L}+\bar{\sigma}^2p\widehat{L})(T-s)\right]
e^{\frac{p+3p\widehat{L}-2\widehat{L}+\bar{\sigma}^2p(1-2\widehat{L}+3p\widehat{L})}{2}(T-s)},
\forall T>s\geq 0.
\end{aligned}
$$
\end{lemma}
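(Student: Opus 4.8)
The plan is to mirror the proof of Lemma~\ref{lemma:3.1}, exploiting the fact that the auxiliary $G$-SDE \eqref{eq:2} is obtained from \eqref{eq:1} by setting the delayed argument equal to the current state; consequently the term that previously produced the $\|\eta\|_{\mathbb{E}}^p$ contribution now folds back into the $|y(r)|^p$ term and merely changes the constants. First I would write $y(t;s,\eta(s))=y(t)$ and apply the $G$-It\^o formula to $|y(r)|^p$, which gives an expansion identical in form to the one displayed in Lemma~\ref{lemma:3.1} but with every occurrence of $x(r-\tau)$ replaced by $y(r)$, i.e. with integrands $f(y(r),y(r))$, $g(y(r),y(r))$, $h(y(r),y(r))$. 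Taking the $G$-expectation, the $dB(r)$ term is a $G$-martingale and drops out, while the $d\langle B\rangle(r)$ term is controlled by Lemma~\ref{lemma:2.1}, which effectively replaces $d\langle B\rangle(r)$ by $\bar\sigma^2\,dr$.

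Next I would bound the drift and the quadratic-variation integrand by elementary means: the inner products are estimated through $\langle a,b\rangle\le \tfrac12(|a|^2+|b|^2)$, the term $|y|^{p-4}|\langle y,h\rangle|^2$ is dominated via Cauchy--Schwarz by $|y|^{p-2}|h|^2$ (so that the two $h$-contributions combine into $\tfrac{p(p-1)}{2}|y|^{p-2}|h|^2$), and then the linear growth condition \eqref{lgc:2} with both arguments equal to $y(r)$ gives $|f(y,y)|^2\vee|g(y,y)|^2\vee|h(y,y)|^2\le \widehat{L}(1+2|y|^2)$. The remaining lower-order power is removed with Young's inequality in the form $|y|^{p-2}\le \tfrac{p-2}{p}|y|^p+\tfrac{2}{p}$ (valid for $p\ge 2$). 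Collecting the coefficients of $|y(r)|^p$ and of the constant separately yields the integral inequality
$$\mathbb{E}|y(t)|^p-\mathbb{E}|y(s)|^p\le (\widehat{L}+\bar\sigma^2 p\widehat{L})(t-s)+\frac{p+3p\widehat{L}-2\widehat{L}+\bar\sigma^2 p(1-2\widehat{L}+3p\widehat{L})}{2}\,\mathbb{E}\int_s^t|y(r)|^p\,dr,$$
with $\mathbb{E}|y(s)|^p=\mathbb{E}|\eta(s)|^p$. Since the affine-in-$(t-s)$ prefactor is non-decreasing, the standard integral Gronwall inequality then produces exactly the asserted bound, and because the right-hand side is increasing in $t$ the supremum over $[s,T]$ is attained at $T$.

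The only point needing extra care is the Euler--Maruyama solution $Y$. Unlike $y$, the process $Y$ defined by \eqref{eq:4} is a $G$-It\^o process whose integrands are evaluated at the frozen value $Y_\Delta(r)=Y(t_n)$ rather than at $Y(r)$, so the $G$-It\^o expansion of $|Y(r)|^p$ produces mixed terms of the type $|Y(r)|^{p-2}|Y_\Delta(r)|^2$. After Young's inequality these split into $\tfrac{p-2}{p}|Y(r)|^p+\tfrac{2}{p}|Y_\Delta(r)|^p$, and since $Y_\Delta(r)$ is a past grid value one has $\mathbb{E}|Y_\Delta(r)|^p\le \sup_{s\le u\le r}\mathbb{E}|Y(u)|^p$; feeding this into a Gronwall argument phrased in terms of the running supremum closes the estimate with the same constants. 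I expect this bookkeeping---tracking the frozen term and justifying the sup-Gronwall step for $Y$---to be the main (though routine) obstacle, the continuous case $y$ being a direct transcription of Lemma~\ref{lemma:3.1}.
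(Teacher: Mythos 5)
Your proposal is correct and follows essentially the same route as the paper: apply the $G$-It\^{o} formula, drop the $G$-martingale term, control the quadratic-variation term via Lemma \ref{lemma:2.1}, use the linear growth bound and Young's inequality on the $|y|^{p-2}$ terms, and close with Gronwall; your bookkeeping reproduces the stated coefficients exactly. Your treatment of the frozen integrand $Y_\Delta(r)$ for the EM solution (splitting the mixed term and running a sup-Gronwall) is a legitimate filling-in of a detail the paper dismisses with ``repeating the procedures above,'' and does not change the constants.
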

\begin{proof}  Write $y(t; s, \eta(s))=y(t)$, by the same procedure
as in the proof of Lemma \ref{lemma:3.1}, we have
$$
\begin{aligned}
\mathbb{E}|y(t)|^p-\mathbb{E}|y(s)|^p \leq
(\widehat{L}+\bar{\sigma}^2p\widehat{L})(t-s)+
\frac{p+3p\widehat{L}-2\widehat{L}+\bar{\sigma}^2p(1-2\widehat{L}+3p\widehat{L})}{2}\mathbb{E}\int_s^t|y(r)|^{p}dr.
\end{aligned}
$$
Applying the Gronwall inequality, we get
$$
\begin{aligned}
\mathbb{E}|y(t)|^p\leq
\left[\mathbb{E}|\eta(s)|^p+(\widehat{L}+\bar{\sigma}^2p\widehat{L})(t-s)\right]
e^{\frac{p+3p\widehat{L}-2\widehat{L}+\bar{\sigma}^2p(1-2\widehat{L}+3p\widehat{L})}{2}(t-s)}.
\end{aligned}
$$
As a result,
$$
\begin{aligned}
\sup_{s\leq t\leq T}\mathbb{E}|y(t)|^p\leq
\left[\mathbb{E}|\eta(s)|^p+(\widehat{L}+\bar{\sigma}^2p\widehat{L})(T-s)\right]
e^{\frac{p+3p\widehat{L}-2\widehat{L}+\bar{\sigma}^2p(1-2\widehat{L}+3p\widehat{L})}{2}(T-s)}.
\end{aligned}
$$
Similarly, write $Y(t; s, \eta(s))=Y(t)$, repeating the procedures
above we can show that
$$
\begin{aligned}
\sup_{s\leq t\leq T}\mathbb{E}|Y(t)|^p\leq
\left[\mathbb{E}|\eta(s)|^p+(\widehat{L}+\bar{\sigma}^2p\widehat{L})(T-s)\right]
e^{\frac{p+3p\widehat{L}-2\widehat{L}+\bar{\sigma}^2p(1-2\widehat{L}+3p\widehat{L})}{2}(T-s)}.
\end{aligned}
$$
The proof is complete.
\end{proof}

\section{Main results}
In this section, we will prove that the equivalence of practical
exponential stability in $p$-th moment sense among the $G$-SDDE, the
auxiliary $G$-SDE and their corresponding EM methods. The whole
section will be divided into four subsections.

\subsection{$G$-SDE shares stability with $G$-SDDE}

\begin{lemma}\label{lemma:3.3} Let $x$ (resp. $X$) be the solution of the $G$-SDDE \eqref{eq:1}
(resp.  $G$-EMSDDE \eqref{eq:3}). Under (H1), for any $T>s\geq 0$,
it holds that
$$
\begin{aligned}
& \sup_{s+\tau \leq t \leq T} \mathbb{E}\left|x(t ; s,
\eta)-x(t-\tau ; s, \eta)\right|^p \ \vee \sup_{s+\tau \leq t \leq
T} \mathbb{E}\left|X(t ; s, \eta)-X(t-\tau ; s, \eta)\right|^p  \\
&\leq  \tau^{\frac{p}{2}}\left(K_1 \|\eta\|_{\mathbb{E}}^{p}
e^{\frac{p+2p\widehat{L}+\bar{\sigma}^2(p+p^2\widehat{L}+p\widehat{L})}{2}(T-s)}+N_1\right),\\
&\sup_{s \leq t \leq s+\tau} \mathbb{E}\left|x(t ; s, \eta)-x(t-\tau
; s, \eta)\right|^p \  \vee \sup_{s \leq t \leq s+\tau}
\mathbb{E}\left|X(t ; s, \eta)-X(t-\tau ; s, \eta)\right|^p \leq
 K_2\|\eta\|_{\mathbb{E}}^{p}+N_2,
\end{aligned}
$$
where
$$
\begin{aligned}
& K_1=3^{\frac{3p}{2}-1}
\widehat{L}^{\frac{p}{2}}(1+\widehat{L}\tau+\bar{\sigma}^2p\widehat{L}\tau)\left[\tau^{\frac{p}{2}}+\bar{\sigma}^{2p}\tau^{\frac{p}{2}}
+\left(\frac{p^{p+1}}{2(p-1)^{p-1}}\right)^{\frac{p}{2}}\bar{\sigma}^{p}\right], \\
& N_1=N_1(s, T)=3^{\frac{3p}{2}-2} \widehat{L}^{\frac{p}{2}}
\left[1+2(\widehat{L}+\bar{\sigma}^2p\widehat{L})(T-s)
e^{\frac{p+2p\widehat{L}+\bar{\sigma}^2(p+p^2\widehat{L}+p\widehat{L})}{2}(T-s)}\right]
\\
&\ \ \qquad \times
\left[\tau^{\frac{p}{2}}+\bar{\sigma}^{2p}\tau^{\frac{p}{2}}
+\left(\frac{p^{p+1}}{2(p-1)^{p-1}}\right)^{\frac{p}{2}}\bar{\sigma}^{p}\right],\\
&
K_2=2^{p-1}\left[1+(1+\widehat{L}\tau+\bar{\sigma}^2p\widehat{L}\tau)
e^{\frac{p+3p\widehat{L}-2\widehat{L}+\bar{\sigma}^2(p+2p^2\widehat{L}-p\widehat{L})}{2}\tau}\right], \\
& N_2=2^{p-1} \tau(\widehat{L}+\bar{\sigma}^2p\widehat{L})
e^{\frac{p+3p\widehat{L}-2\widehat{L}+\bar{\sigma}^2(p+2p^2\widehat{L}-p\widehat{L})}{2}\tau}.
\end{aligned}
$$
\end{lemma}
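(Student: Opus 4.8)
The plan is to handle the two time-windows separately, reducing in each case the increment $x(t)-x(t-\tau)$ to stochastic integrals over the short interval $[t-\tau,t]$, whose length $\tau$ is the source of the small factor $\tau^{p/2}$. For the first range $s+\tau\le t\le T$ we have $t-\tau\ge s$, so both endpoints lie in the solution regime. Subtracting the integral forms of $x(t)$ and $x(t-\tau)$ (which both start from $x(s)$) gives
$$x(t)-x(t-\tau)=\int_{t-\tau}^{t} f(x(r),x(r-\tau))\,dr+\int_{t-\tau}^{t} g(x(r),x(r-\tau))\,d\langle B\rangle(r)+\int_{t-\tau}^{t} h(x(r),x(r-\tau))\,dB(r).$$
I would then apply $|a+b+c|^{p}\le 3^{p-1}(|a|^{p}+|b|^{p}+|c|^{p})$ to split the $p$-th moment into three pieces, which accounts for the leading $3^{p-1}$ in the constants.

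Each piece is estimated by its natural tool: Hölder's inequality on the $dr$-integral yields $\tau^{p-1}\int_{t-\tau}^{t}\mathbb{E}|f|^{p}\,dr$; the (re-centered) Lemma \ref{lemma:2.1} on the $d\langle B\rangle$-integral yields $\bar{\sigma}^{2p}\tau^{p-1}\int_{t-\tau}^{t}\mathbb{E}|g|^{p}\,dr$; and the (re-centered) Lemma \ref{lemma:2.2} on the $dB$-integral yields $C(p)\bar{\sigma}^{p}\tau^{p/2-1}\int_{t-\tau}^{t}\mathbb{E}|h|^{p}\,dr$. Since each window has length $\tau$, the first two terms scale like $\tau^{p}$ while the stochastic-integral term scales like $\tau^{p/2}$; factoring out the common $\tau^{p/2}$ is precisely what produces the bracket $[\tau^{p/2}+\bar{\sigma}^{2p}\tau^{p/2}+C(p)\bar{\sigma}^{p}]$ appearing in $K_1$ and $N_1$.

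Next I would insert the linear growth bound \eqref{lgc:2}: from $|f|^{2}\vee|g|^{2}\vee|h|^{2}\le\widehat{L}(1+|x|^{2}+|x(r-\tau)|^{2})$ together with the power-mean inequality $(1+a+b)^{p/2}\le 3^{p/2-1}(1+a^{p/2}+b^{p/2})$ one gets $\mathbb{E}|f|^{p}\le\widehat{L}^{p/2}3^{p/2-1}(1+\mathbb{E}|x(r)|^{p}+\mathbb{E}|x(r-\tau)|^{p})$, and similarly for $g,h$; this explains the $\widehat{L}^{p/2}$ and the residual powers of $3$. I would then invoke Lemma \ref{lemma:3.1} to bound $\sup_{s-\tau\le r\le T}\mathbb{E}|x(r)|^{p}$ (note that $r-\tau$ only reaches $s-\tau$, which is inside that range), whose $\|\eta\|_{\mathbb{E}}^{p}$-coefficient is exactly $(1+\widehat{L}\tau+\bar{\sigma}^{2}p\widehat{L}\tau)$ and whose $(T-s)$-coefficient is $(\widehat{L}+\bar{\sigma}^{2}p\widehat{L})$, matching $K_1$ and $N_1$; the exponential rate is inherited from the Gronwall constant of that lemma. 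For the second range $s\le t\le s+\tau$ the argument is immediate: here $t-\tau\le s$, so $x(t-\tau;s,\eta)=\eta(t-\tau)$ belongs to the initial segment and $\mathbb{E}|x(t)-x(t-\tau)|^{p}\le 2^{p-1}(\mathbb{E}|x(t)|^{p}+\|\eta\|_{\mathbb{E}}^{p})$; bounding $\mathbb{E}|x(t)|^{p}$ by Lemma \ref{lemma:3.1} with horizon $T-s=\tau$ gives $K_2$ and $N_2$ directly. The Euler--Maruyama case $X$ repeats verbatim on the continuous interpolation \eqref{eq:3}, since $X(t)-X(t-\tau)$ is the same triple of integrals with integrands evaluated at the step values $X_{\Delta}(r),X_{\Delta}(r-\tau)$, the linear growth applies unchanged, and $\mathbb{E}|X_{\Delta}(r)|^{p}$ is controlled by the $X$-part of Lemma \ref{lemma:3.1} with identical constants.

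The step I expect to be the main obstacle is not any single estimate but the constant bookkeeping: tracking the powers of $3$, $\widehat{L}^{p/2}$, $\bar{\sigma}$, and $\tau$ coming from the triple splitting, the power-mean inequality, and the three distinct BDG-type scalings, while keeping $\tau^{p/2}$ factored out cleanly. A secondary point needing justification is the application of Lemmas \ref{lemma:2.1}--\ref{lemma:2.2} over the shifted window $[t-\tau,t]$ rather than $[0,t]$; this is legitimate because the $G$-Brownian increments are stationary and independent of the past, so the stochastic integrals may be re-centered at $t-\tau$ without changing the form of the bound.
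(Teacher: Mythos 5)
Your proposal follows essentially the same route as the paper's proof: the same decomposition of $x(t)-x(t-\tau)$ into three integrals over $[t-\tau,t]$, the same use of H\"{o}lder's inequality and Lemmas \ref{lemma:2.1}--\ref{lemma:2.2} to extract the $\tau^{p/2}$ factor, the same linear-growth estimate \eqref{pthlg} combined with Lemma \ref{lemma:3.1}, and the same $2^{p-1}$ splitting on $[s,s+\tau]$ using the initial segment. The argument and constants match; your remark about re-centering the BDG-type bounds on the shifted window is a point the paper passes over silently but is correctly justified.
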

\begin{remark}
We emphasis that the notion $N_1=N_1(s, T)$ depends on the interval
length $T-s$ and therefore, in the following text, the variables $s$
and $T$ in $N_1$ should be replaced appropriately according to the
corresponding time interval.
\end{remark}

\begin{proof} Write $x(t; s, \eta)=x(t)$. For $t \geq
s+\tau$, applying the elementary inequality, the H\"{o}lder
inequality, Lemmas \ref{lemma:2.1} and \ref{lemma:2.2}, we get that
$$
\begin{aligned}
\mathbb{E}|x(t)-x(t-\tau)|^p
 \leq &  3^{p-1}\left[\tau^{p-1} \mathbb{E} \int_{t-\tau}^t|f(x(r),
x(r-\tau))|^p dr+\bar{\sigma}^{2p}\tau^{p-1} \mathbb{E}
\int_{t-\tau}^t|g(x(r), x(r-\tau))|^p dr\right.
\\&
\left.+\left(\frac{p^{p+1}}{2(p-1)^{p-1}}\right)^{\frac{p}{2}}\bar{\sigma}^{p}\tau^{\frac{p}{2}-1}
\mathbb{E}\int_{t-\tau}^t |h(x(r), x(r-\tau))|^pdr\right].
\end{aligned}
$$
By \eqref{lgc:2}, we have
\begin{eqnarray}\label{pthlg}
|f(x(t), x(t-\tau))|^p \vee|g(x(t), x(t-\tau))|^p \vee|h(x(t),
x(t-\tau))|^p\leq 3^{\frac{p}{2}-1}
\widehat{L}^{\frac{p}{2}}\left(1+|x(t)|^p+|x(t-\tau)|^p\right).
\end{eqnarray}
Thus,
$$
\begin{aligned}
& \mathbb{E}|x(t)-x(t-\tau)|^p
\\
& \leq 3^{\frac{3p}{2}-2}
\widehat{L}^{\frac{p}{2}}\left[\tau^{p}+\bar{\sigma}^{2p}\tau^{p}
+\left(\frac{p^{p+1}}{2(p-1)^{p-1}}\right)^{\frac{p}{2}}\bar{\sigma}^{p}\tau^{\frac{p}{2}}
\right]\left(1+2\sup_{s-\tau\leq u\leq t}|x(u)|^p\right).
\end{aligned}
$$
By Lemma \ref{lemma:3.1} , we obtain that
$$
\begin{aligned}
& \sup_{s+\tau\leq t\leq T}\mathbb{E}|x(t)-x(t-\tau)|^p
\\
& \leq 3^{\frac{3p}{2}-2}
\widehat{L}^{\frac{p}{2}}\left[\tau^{p}+\bar{\sigma}^{2p}\tau^{p}
+\left(\frac{p^{p+1}}{2(p-1)^{p-1}}\right)^{\frac{p}{2}}\bar{\sigma}^{p}\tau^{\frac{p}{2}}
\right]\\&
\quad\times\left[1+2\left((\widehat{L}+\bar{\sigma}^2p\widehat{L})(t-s)
+(1+\widehat{L}\tau+\bar{\sigma}^2p\widehat{L}\tau)\|\eta\|_{\mathbb{E}}^{p}\right)
e^{\frac{p+2p\widehat{L}+\bar{\sigma}^2(p+p^2\widehat{L}+p\widehat{L})}{2}(T-s)}\right]
\\
& \leq \tau^{\frac{p}{2}} \left(K_1  \|\eta\|_{\mathbb{E}}^{p}
e^{\frac{p+2p\widehat{L}+\bar{\sigma}^2(p+p^2\widehat{L}+p\widehat{L})}{2}(T-s)}+N_1\right).
\end{aligned}
$$
When $t \in[s, s+\tau]$, by \eqref{ineq3.1}, we have
$$
\begin{aligned}
& \mathbb{E}|x(t)-x(t-\tau)|^p \leq 2^{p-1} \mathbb{E}|x(t)|^p+2^{p-1} \mathbb{E}|x(t-\tau)|^p \\
&\leq 2^{p-1} \left[(\widehat{L}+\bar{\sigma}^2p\widehat{L})(t-s)+
(1+\widehat{L}\tau+\bar{\sigma}^2p\widehat{L}\tau)\|\eta\|_{\mathbb{E}}^{p}\right]
e^{\frac{p+3p\widehat{L}-2\widehat{L}+\bar{\sigma}^2(p+2p^2\widehat{L}-p\widehat{L})}{2}(t-s)}
+2^{p-1}\|\eta\|_{\mathbb{E}}^{p}
\end{aligned}
$$
and then
$$
\begin{aligned}
\sup_{s\leq t\leq s+\tau}\mathbb{E}|x(t)-x(t-\tau)|^p \leq
K_2\|\eta\|_{\mathbb{E}}^{p}+N_2.
\end{aligned}
$$
By the same procedure above, we can get the same result for $X(t; s,
\eta)=X(t)$. The proof is complete.
\end{proof}

\begin{lemma}\label{lamma:3.4}
Let $x, y$ be the solution to the $G$-SDDE \eqref{eq:1} and its
auxiliary  $G$-SDE \eqref{eq:2}, respectively. Under (H1), for any
$T> s \geq 0$, it holds that
$$
\begin{aligned}
&\sup_{s\leq t\leq T}\mathbb{E}|x(t)-y(t)|^p
\\
\leq &(2L+2\bar{\sigma}^2pL)\left[\left(K_2
\|\eta\|_{\mathbb{E}}^{p}+N_2 \right)\tau+ \tau^{\frac{p}{2}}
\left(K_1\|\eta\|_{\mathbb{E}}^{p}
e^{\frac{p+2p\widehat{L}+\bar{\sigma}^2(p+p^2\widehat{L}+p\widehat{L})}{2}(T-s)}+N_1(T-s)\right)\right]
\\&
\times e^{\frac{p+5pL-4L+\bar{\sigma}^2(p+5p^2L-4pL)}{2}(T-s)}.
\end{aligned}
$$
\end{lemma}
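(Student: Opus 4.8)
The plan is to run the same $G$-It\^{o}/Gronwall scheme as in Lemma \ref{lemma:3.1}, but applied to the difference process $z(r):=x(r)-y(r)$, which records how far the delayed equation \eqref{eq:1} sits from its auxiliary nondelay version \eqref{eq:2}. Since $x$ and $y$ are launched from the same point $x(s)=y(s)=\eta(s)$, we have $\mathbb{E}|z(s)|^p=0$, so the initial term drops out; this is what ensures no constant survives the Gronwall step and forces the whole right-hand side to vanish as $\tau\to0$. Writing $F$, $Q$, $H$ for the coefficient increments $f(x(r),x(r-\tau))-f(y(r),y(r))$, $g(x(r),x(r-\tau))-g(y(r),y(r))$ and $h(x(r),x(r-\tau))-h(y(r),y(r))$, I would apply the $G$-It\^{o} formula to $|z(r)|^p$ precisely as in Lemma \ref{lemma:3.1}, take $\mathbb{E}$ so that the $dB(r)$ term vanishes, and bound the drift and $d\langle B\rangle$ contributions using $2\langle z,F\rangle\leq|z|^2+|F|^2$, $|\langle z,H\rangle|^2\leq|z|^2|H|^2$, the combination $\frac{p(p-2)}{2}+\frac{p}{2}=\frac{p(p-1)}{2}$ for the $|z|^{p-2}|H|^2$ coefficient, and $d\langle B\rangle(r)\leq\bar{\sigma}^2\,dr$.

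Next comes the Lipschitz reduction. By (H1),
$$
|F|^2\vee|Q|^2\vee|H|^2\leq L\bigl(|z(r)|^2+|x(r-\tau)-y(r)|^2\bigr),
$$
and the cross term is split via $|x(r-\tau)-y(r)|^2\leq 2|x(r)-x(r-\tau)|^2+2|z(r)|^2$. Substituting these everywhere and collecting yields an integral inequality of the form
$$
\mathbb{E}|z(t)|^p\leq A\int_s^t\mathbb{E}|z(r)|^p\,dr+B\int_s^t\mathbb{E}\bigl(|z(r)|^{p-2}|x(r)-x(r-\tau)|^2\bigr)\,dr,
$$
where $B=pL+\bar{\sigma}^2p^2L$. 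Applying the Young inequality $|z|^{p-2}w^2\leq\frac{p-2}{p}|z|^p+\frac{2}{p}w^p$ with $w=|x(r)-x(r-\tau)|$ converts the mixed term into an extra $|z|^p$ piece plus the pure delay-difference term $|x(r)-x(r-\tau)|^p$; the coefficient of the latter is then exactly $\frac{2}{p}B=2L+2\bar{\sigma}^2pL$, while the total coefficient of $|z|^p$ assembles into $A'=\frac{p+5pL-4L+\bar{\sigma}^2(p+5p^2L-4pL)}{2}$, matching the exponent in the statement.

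It then remains to control $\mathbb{E}|x(r)-x(r-\tau)|^p$ via Lemma \ref{lemma:3.3}. As that estimate splits into two regimes, I would break $\int_s^t$ at $s+\tau$: on $[s,s+\tau]$ the bound $K_2\|\eta\|_{\mathbb{E}}^p+N_2$ integrates to $(K_2\|\eta\|_{\mathbb{E}}^p+N_2)\tau$, and on $[s+\tau,t]$ the bound $\tau^{p/2}\bigl(K_1\|\eta\|_{\mathbb{E}}^p e^{\frac{p+2p\widehat{L}+\bar{\sigma}^2(p+p^2\widehat{L}+p\widehat{L})}{2}(T-s)}+N_1\bigr)$ integrates to at most $\tau^{p/2}\bigl(K_1\|\eta\|_{\mathbb{E}}^p e^{\frac{p+2p\widehat{L}+\bar{\sigma}^2(p+p^2\widehat{L}+p\widehat{L})}{2}(T-s)}+N_1\bigr)(T-s)$. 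Multiplying by $2L+2\bar{\sigma}^2pL$ produces exactly the bracketed, nondecreasing quantity $C(T)$ in the statement, and since $C$ is nondecreasing a Gronwall argument gives $\sup_{s\leq t\leq T}\mathbb{E}|z(t)|^p\leq C(T)e^{A'(T-s)}$, which is the asserted inequality.

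I expect the main obstacle to be purely the constant bookkeeping: keeping the drift, the $d\langle B\rangle$, and the quadratic-variation correction $\frac{p(p-2)}{2}|z|^{p-4}|\langle z,H\rangle|^2$ correctly weighted so that the $|z|^p$ coefficient collapses to the stated exponent, and, per the Remark following Lemma \ref{lemma:3.3}, remembering that $N_1=N_1(s,T)$ must be read off on the correct subinterval when it enters the integration.
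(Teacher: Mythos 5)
Your proposal follows essentially the same route as the paper: $G$-It\^{o} formula applied to $|x-y|^p$, the Lipschitz bound with the split $|x(r-\tau)-y(r)|^2\leq 2|x(r)-x(r-\tau)|^2+2|x(r)-y(r)|^2$, Young's inequality to isolate $(2L+2\bar{\sigma}^2pL)\mathbb{E}|x(r)-x(r-\tau)|^p$, Lemma \ref{lemma:3.3} on the two subintervals, and Gronwall, with the same constants emerging. The only (harmless) divergence is bookkeeping: your integration places the factor $(T-s)$ on the whole $\tau^{p/2}$ bracket rather than only on $N_1$ as the paper writes it, which still yields a valid bound of the stated form.
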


\begin{proof}
Write $x(t ; s, \eta)=x(t)$ and $y(t; s, \eta(s))=y(t)$, then
$$
\begin{aligned}
 x(t)-y(t) = &  \int_{s}^t(f(x(r),
x(r-\tau))-f(y(r), y(r)))dr+\int_{s}^t(g(x(r), x(r-\tau))-g(y(r),
y(r)))d\langle B\rangle(r)  \\
&+\int_{s}^t(h(x(r), x(r-\tau))-h(y(r), y(r)))dB(r) .
\end{aligned}
$$
By the $G$-It\^{o}'s formula, the elementary inequality and Lemma
\ref{lemma:2.1}, we have
$$
\begin{aligned}
&\mathbb{E}|x(t)-y(t)|^p \\
\leq &\frac{p}{2}\mathbb{E}\int_s^t|x(r)-y(r)|^{p-2}(|x(r)-y(r)|^2+|f(x(r), x(r-\tau))-f(y(r), y(r))|^2) dr \\
& +\bar{\sigma}^2 \mathbb{E}\left [\int_s^t\left(\frac{p(p-2)}{2} |x(r)-y(r)|^{p-4}|x(r)-y(r)|^2|h(x(r), x(r-\tau))-h(y(r), y(r))|^2\right.\right. \\
& +\frac{p}{2} |x(r)-y(r)|^{p-2}|h(x(r), x(r-\tau))-h(y(r), y(r))|^2 \\
&  \left.\left.+\frac{p}{2}|x(r)-y(r)|^{p-2}(|x(r)-y(r)|^2+|g(x(r),
x(r-\tau))-g(y(r), y(r))|^2) \right) dr \right].
\end{aligned}
$$
Combining  (H1) with the H\"{o}lder inequality, we can get
$$
\begin{aligned}
&\mathbb{E}|x(t)-y(t)|^p
 \\
\leq &
\frac{p+pL}{2}\mathbb{E}\int_s^t|x(r)-y(r)|^{p}dr+\frac{pL}{2}\mathbb{E}\int_s^t|x(r)-y(r)|^{p-2}|x(r-\tau)-y(r)|^2
dr
\\
& +\bar{\sigma}^2 \mathbb{E}\left(\int_s^t \frac{p+p^2L}{2}
|x(r)-y(r)|^{p}dr
+\frac{p^2L}{2}\int_s^t|x(r)-y(r)|^{p-2}|x(r-\tau)-y(r)|^2dr\right)
\\
\leq & \frac{p+3pL}{2}\mathbb{E}\int_s^t|x(r)-y(r)|^{p}dr+pL\mathbb{E}\int_s^t|x(r)-y(r)|^{p-2}|x(r)-x(r-\tau)|^2 dr \\
& +\bar{\sigma}^2 \mathbb{E}\left(\int_s^t \frac{p+3p^2L}{2}
|x(r)-y(r)|^{p}dr +p^2L\int_s^t|x(r)-y(r)|^{p-2}|x(r)-x(r-\tau)|^2
dr\right)
\\
\leq
&\frac{p+5pL-4L+\bar{\sigma}^2(p+5p^2L-4pL)}{2}\mathbb{E}\int_s^t|x(r)-y(r)|^{p}dr
\\ &+(2L+2\bar{\sigma}^2pL) \mathbb{E}\int_s^t |x(r)-x(r-\tau)|^p
dr
\\
\leq
&\frac{p+5pL-4L+\bar{\sigma}^2(p+5p^2L-4pL)}{2}\mathbb{E}\int_s^t|x(r)-y(r)|^{p}dr
\\ &+(2L+2\bar{\sigma}^2pL)\left[\left(K_2\|\eta\|_{\mathbb{E}}^{p}+N_2\right)\tau+ \tau^{\frac{p}{2}} \left(K_1\|\eta\|_{\mathbb{E}}^{p}
e^{\frac{p+2p\widehat{L}+\bar{\sigma}^2(p+p^2\widehat{L}+p\widehat{L})}{2}(t-s)}+N_1(s,
t)(t-s)\right)\right].
\end{aligned}
$$
The desired assertion then follows by applying the Gronwall
inequality.
\end{proof}
The following theorem shows that the auxiliary $G$-SDE \eqref{eq:2}
shares the practical exponential stability with the $G$-SDDE
\eqref{eq:1}, provided the delay $\tau$ is small enough.
\begin{theorem} \label{th:4} Assume that the $G$-SDDE \eqref{eq:1} is  practically exponential
stable in $p$-th moment and that (H1) holds. For any given $\delta
\in(0,1)$, let $T=\frac{\ln\left(2^{p-1} M_1 /
\delta\right)}{\lambda_1}+\tau$, if $\tau>0$ is small enough such
that
$$
R(\tau):=\delta+2^{p}
(L+\bar{\sigma}^2pL)\left(K_2\tau+K_1\tau^{\frac{p}{2}}
e^{\frac{p+2p\widehat{L}+\bar{\sigma}^2(p+p^2\widehat{L}+p\widehat{L})}{2}(2T-\tau)}
\right)e^{\frac{p+5pL-4L+\bar{\sigma}^2(p+5p^2L-4pL)}{2}(2T-\tau)}<1,
$$
then  the $G$-SDE \eqref{eq:2}  is also practically exponential
stable in $p$-th moment.
\end{theorem}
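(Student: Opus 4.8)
The plan is to transfer the practical exponential stability from the $G$-SDDE \eqref{eq:1} to the auxiliary $G$-SDE \eqref{eq:2} by a window-and-iteration argument, with the comparison estimate of Lemma~\ref{lamma:3.4} as the engine. The decisive point behind the choice $T=\frac{\ln(2^{p-1}M_1/\delta)}{\lambda_1}+\tau$ is the exact identity $2^{p-1}M_1e^{-\lambda_1(T-\tau)}=\delta$: over one window the delayed solution will contribute exactly a factor $\delta$ to the $p$-th moment of $y$, while Lemma~\ref{lamma:3.4} contributes a factor that is $O(\tau^{p/2})$ small, and their sum is precisely $R(\tau)$. The hypothesis $R(\tau)<1$ then makes the iteration a genuine contraction.

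First I would record that, since \eqref{eq:1} is autonomous and the $G$-Brownian increments are stationary and independent, the practical stability estimate \eqref{def-pes-gsdde} holds from an arbitrary initial time, namely $\mathbb{E}|x(t;s,\eta)|^p\le M_1\|\eta\|_{\mathbb{E}}^p e^{-\lambda_1(t-s)}+d_1$ for $t\ge s$ and $\eta\in\mathcal{L}^p_{\mathcal{H}_s}$. I would then set $s_k=kT+\tau$ and, at each stage, introduce the delayed comparison solution $x(\cdot\,;s_k,y_{s_k})$ whose initial segment is the $y$-history $y_{s_k}=\{y(s_k+\theta):-\tau\le\theta\le0\}$. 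The flow property of \eqref{eq:2} gives $y(t)=y(t;s_k,y(s_k))$, so Lemma~\ref{lamma:3.4} applies verbatim on the interval $[s_k,(k+2)T]$, whose length is exactly $2T-\tau$; this is the origin of the exponents $e^{c_1(2T-\tau)}$ and $e^{c_2(2T-\tau)}$ in $R(\tau)$, where $c_1,c_2$ denote the two rate exponents of Lemma~\ref{lamma:3.4}.

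Next, for $t\in[(k+1)T,(k+2)T]$ I would split $\mathbb{E}|y(t)|^p\le 2^{p-1}\mathbb{E}|x(t;s_k,y_{s_k})|^p+2^{p-1}\mathbb{E}|x(t;s_k,y_{s_k})-y(t)|^p$. Since $t-s_k\ge T-\tau$, the shifted stability estimate bounds the first term with $\|y_{s_k}\|_{\mathbb{E}}^p$-coefficient $2^{p-1}M_1e^{-\lambda_1(T-\tau)}=\delta$, while Lemma~\ref{lamma:3.4} bounds the second with $\|y_{s_k}\|_{\mathbb{E}}^p$-coefficient $2^{p}(L+\bar{\sigma}^2pL)(K_2\tau+K_1\tau^{p/2}e^{c_1(2T-\tau)})e^{c_2(2T-\tau)}$. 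Writing $u_k:=\sup_{kT\le t\le(k+1)T}\mathbb{E}|y(t)|^p$ and noting $\|y_{s_k}\|_{\mathbb{E}}^p\le u_k$, the two coefficients add to exactly $R(\tau)$, and the additive pieces ($2^{p-1}d_1$ together with the $N_1(s_k,(k+2)T),N_2$ contributions, all evaluated on the fixed length $2T-\tau$) combine into a constant $C$ independent of $k$. This yields the recursion $u_{k+1}\le R(\tau)\,u_k+C$.

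Finally, iterating this with $R(\tau)<1$ gives $u_k\le R(\tau)^k u_0+C/(1-R(\tau))$; Lemma~\ref{lemma:3.2} bounds the base term $u_0=\sup_{0\le t\le T}\mathbb{E}|y(t)|^p$ by a multiple of $\mathbb{E}|\xi(0)|^p$ plus a constant, and converting the geometric decay $R(\tau)^{\lfloor t/T\rfloor}$ into $e^{-\lambda_2 t}$ with $\lambda_2=-\ln R(\tau)/T>0$ produces exactly \eqref{def-pes-gsde}. I expect the main obstacle to be organizational rather than computational: one must choose the restart times and the comparison horizon ($s_k=kT+\tau$, horizon $2T-\tau$) so that the delayed part decays by precisely $\delta$ while Lemma~\ref{lamma:3.4} supplies a contribution whose $\|y_{s_k}\|_{\mathbb{E}}^p$-coefficient, together with every additive constant, is uniform in $k$. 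The mismatch between the two notions of practical stability (path norm $\|\cdot\|_{\mathbb{E}}^p$ for the $G$-SDDE versus point moment $\mathbb{E}|\cdot|^p$ for the $G$-SDE) is resolved by feeding the delayed equation its comparison solution's own $y$-history, so that $\|y_{s_k}\|_{\mathbb{E}}^p$ is controlled by $u_k$ and the recursion closes.
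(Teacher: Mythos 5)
Your proposal is correct and follows essentially the same route as the paper: restart the delayed equation at each stage with the $y$-history as initial segment, combine the time-shifted practical stability of the $G$-SDDE with the comparison bound of Lemma~\ref{lamma:3.4} over a window of length $2T-\tau$ so that the coefficients sum to exactly $R(\tau)$, and iterate the resulting contraction. The only differences are cosmetic (your restart times $s_k=kT+\tau$ and windows $[kT,(k+1)T]$ versus the paper's restarts at $iT$ and windows $[(i+1)T-\tau,(i+2)T-\tau]$, and your use of Lemma~\ref{lemma:3.2} for the base window where the paper reuses the $i=0$ comparison).
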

\begin{proof}
Write $y(t)=y(t ; 0, \xi(0)), x(t)=x(t ; 0, \xi)$, by the elementary
inequality,  the practical exponential stability in $p$-th moment of
the $G$-SDDE \eqref{eq:1} and Lemma \ref{lamma:3.4}, we have
\begin{eqnarray}\label{ineq3.3}
\mathbb{E}|y(t)|^p &\leq &  2^{p-1} (\mathbb{E}|x(t)|^p+ \mathbb{E}|x(t)-y(t)|^p) \nonumber \\
&\leq & 2^{p-1}  M_1\|\xi\|_{\mathbb{E}}^{p} e^{-\lambda_1 t}+
 2^{p-1}d_1
\nonumber
\\
&& +2^{p}
(L+\bar{\sigma}^2pL)\left(\left(K_2\|\xi\|_{\mathbb{E}}^{p}+N_2\right)\tau+
\tau^{\frac{p}{2}} \left(K_1\|\xi\|_{\mathbb{E}}^{p}
e^{\frac{p+2p\widehat{L}+\bar{\sigma}^2(p+p^2\widehat{L}+p\widehat{L})}{2}t}+N_1(0,
t)t\right)\right)\nonumber
\\
&&\times e^{\frac{p+5pL-4L+\bar{\sigma}^2(p+5p^2L-4pL)}{2}t}.
\end{eqnarray}
By the definition of $T$, we observe that
$$
2^{p-1} M_1 e^{-\lambda_1(T-\tau)}=\delta .
$$
Thus
\begin{eqnarray*}
&&\sup _{T-\tau \leq t \leq 2T-\tau}\mathbb{E}|y(t)|^p
\nonumber\\&\leq & \left[\delta+2^{p}
(L+\bar{\sigma}^2pL)\left(K_2\tau+K_1\tau^{\frac{p}{2}}
e^{\frac{p+2p\widehat{L}+\bar{\sigma}^2(p+p^2\widehat{L}+p\widehat{L})}{2}(2T-\tau)}
\right)e^{\frac{p+5pL-4L+\bar{\sigma}^2(p+5p^2L-4pL)}{2}(2T-\tau)}\right]\|\xi\|_{\mathbb{E}}^{p}\nonumber
\\
&& +
 2^{p-1}d_1+2^{p}
(L+\bar{\sigma}^2pL)(N_2\tau+N_1(0, T)\tau^{\frac{p}{2}}(2T-\tau))
e^{\frac{p+5pL-4L+\bar{\sigma}^2(p+5p^2L-4pL)}{2}(2T-\tau)}\nonumber
\\
&=&R(\tau)\|\xi\|_{\mathbb{E}}^{p}+d_3,
\end{eqnarray*}
where $d_3=2^{p-1}d_1+2^{p} (L+\bar{\sigma}^2pL)(N_2\tau+N_1(0,
T)\tau^{\frac{p}{2}}(2T-\tau))
e^{\frac{p+5pL-4L+\bar{\sigma}^2(p+5p^2L-4pL)}{2}(2T-\tau)}$. Since
$R(\tau)<1$,  choosing $\lambda_2>0$ such that
$R(\tau)=e^{-\lambda_2 T}$, we have
$$
\sup _{T-\tau \leq t \leq 2T-\tau}\mathbb{E}|y(t)|^p\leq
e^{-\lambda_2 T}\|\xi\|_{\mathbb{E}}^{p} +d_3.
$$
Using the flow property of the solution to the $G$-SDE \eqref{eq:2},
for $y(t ; 0, \xi(0))=y(t ; iT, y(iT)), i=0,1,2, \cdots$, we write
$x(t ; iT, y_{iT})=x(t)$, then the $p$-th moment practical
exponential stability of the $G$-SDDE \eqref{eq:1} implies that
$$
\mathbb{E}|x(t)|^p \leq  M_1\|y_{iT}\|_{\mathbb{E}}^{p}
 e^{-\lambda_1(t-iT)}+d_1,
$$
where $\|y_{iT}\|_{\mathbb{E}}^{p}=\sup _{iT-\tau \leq u \leq iT}
\mathbb{E}|y(u)|^p$. For any $t \geq i T$, using Lemma
\ref{lamma:3.4} again, we have
\begin{eqnarray*}
\mathbb{E}|y(t)|^p&\leq &  2^{p-1} \mathbb{E}|x(t; iT,
y_{iT})|^p+2^{p-1} \mathbb{E}|x(t ; iT, y_{iT})-y(t ; iT, y(iT))|^p \nonumber \\
&\leq & 2^{p-1} M_1 \left\|y_{i T}\right\|_{\mathbb{E}}^{p} e^{-\lambda_1(t-iT)}
+ 2^{p-1}d_1 +2^{p}(L+\bar{\sigma}^2pL)\nonumber \\
 &&\times\left[\left(K_2\left\|y_{i T}\right\|_{\mathbb{E}}^{p}+N_2\right)\tau+
\tau^{\frac{p}{2}} \left(K_1\left\|y_{i T}\right\|_{\mathbb{E}}^{p}
e^{\frac{p+2p\widehat{L}+\bar{\sigma}^2(p+p^2\widehat{L}+p\widehat{L})}{2}(t-iT)}+N_1(iT,
t)(t-iT)\right)\right] \nonumber \\&& \times
e^{\frac{p+5pL-4L+\bar{\sigma}^2(p+5p^2L-4pL)}{2}(t-iT)}.
\end{eqnarray*}
Hence
\begin{eqnarray*}
&&\sup _{(i+1)T-\tau \leq t \leq(i+2)T-\tau}\mathbb{E}|y(t)|^p
\nonumber \\&\leq &
 2^{p-1} M_1 \|y_{iT}\|_{\mathbb{E}}^{p} e^{-\lambda_1(T-\tau)} + 2^{p-1}d_1+2^{p}(L+\bar{\sigma}^2pL)\nonumber \\
 &&\times\left[\left(K_2\|y_{iT}\|_{\mathbb{E}}^{p}+N_2\right)\tau+
\tau^{\frac{p}{2}} \left(K_1\left\|y_{i T}\right\|_{\mathbb{E}}^{p}
e^{\frac{p+2p\widehat{L}+\bar{\sigma}^2(p+p^2\widehat{L}+p\widehat{L})}{2}(2T-\tau)}+N_1(0,
T)(2T-\tau)\right)\right] \nonumber \\&& \times
e^{\frac{p+5pL-4L+\bar{\sigma}^2(p+5p^2L-4pL)}{2}(2T-\tau)}\nonumber\\
&\leq & R(\tau)\|y_{iT}\|_{\mathbb{E}}^{p} + d_3
\nonumber\\
&\leq & e^{-\lambda_2 T} \sup _{iT-\tau \leq t \leq
iT}\mathbb{E}|y(t)|^P  + d_3.
\end{eqnarray*}
Recalling that $T \geq \tau$, by iteration, we obtain that
\begin{eqnarray*}
&&\sup_{(i+1) T-\tau \leq t \leq(i+2) T-\tau}
\mathbb{E}|y(t)|^P\nonumber
\\
&& \leq e^{-i\lambda_2 T}  \sup _{T-\tau \leq t \leq
2T-\tau}\mathbb{E}|y(t)|^P+d_3(e^{-\lambda_2(i-1)T}+e^{-\lambda_2(i-2)T}+\cdots+e^{-\lambda_2T}+1)
 \nonumber\\
&& \leq e^{-(i+1)\lambda_2 T}
\|\xi\|_{\mathbb{E}}^{p}+d_3(e^{-\lambda_2iT}+e^{-\lambda_2(i-1)T}+e^{-\lambda_2(i-2)T}+\cdots+e^{-\lambda_2T}+1)
 \nonumber\\
&& \leq e^{-(i+1)\lambda_2 T}
\|\xi\|_{\mathbb{E}}^{p}+\frac{d_3}{1-e^{-\lambda_2T}}.
\end{eqnarray*}
Moreover, using \eqref{ineq3.3}, we have
$$
\begin{aligned}
\sup _{0 \leq t \leq T-\tau} \mathbb{E}|y(t)|^P \leq & 2^{p-1}
M_1\|\xi\|_{\mathbb{E}}^{p} +
 2^{p-1}d_1
 +2^{p}(L+\bar{\sigma}^2pL)
\\
&\times\left[\tau\left(K_2\|\xi\|_{\mathbb{E}}^{p}+N_2\right)+
\tau^{\frac{p}{2}} \left(K_1\|\xi\|_{\mathbb{E}}^{p}
e^{\frac{p+2p\widehat{L}+\bar{\sigma}^2(p+p^2\widehat{L}+p\widehat{L})}{2}(T-\tau)}+N_1(T-\tau)\right)\right]\nonumber
\\
&\times e^{\frac{p+5pL-4L+\bar{\sigma}^2(p+5p^2L-4pL)}{2}(T-\tau)}
\\
\leq & 2^{p-1} M_1 \|\xi\|_{\mathbb{E}}^{p}+R(\tau)\|\xi\|_E^p + d_3
\\
\leq & (2^{p-1} M_1e^{\lambda_2T}+1)
\|\xi\|_{\mathbb{E}}^{p}e^{-\lambda_2T} + d_3.
\end{aligned}
$$
This, together with \eqref{ineq3.3}, implies that
$$
\mathbb{E}|y(t)|^p \leq M_2 \mathbb{E}|\xi(0)|^{p}e^{-\lambda_2 t} +
d_2, \ \forall t \geq 0,
$$
where $M_2= \frac{(2^{p-1}
M_1+1)\|\xi\|_{\mathbb{E}}^{p}e^{\lambda_2T}}{\mathbb{E}|\xi(0)|^{p}}$
and $d_2=\frac{d_3}{1-e^{-\lambda_2T}}$, i.e., the $G$-SDE
\eqref{eq:2} is practically exponential stable in $p$-th moment. We
complete the proof.
\end{proof}

\subsection{$G$-EMSDE shares stability with $G$-SDE}

\begin{lemma}\label{lemma:3.5}
Let $Y$ be the solution of the $G$-EMSDE \eqref{eq:4} and let $s
\geq 0$ be a multiple of $\Delta$. Under  (H1), it holds that
$$
\begin{aligned}
\sup_{s \leq t \leq T}\mathbb{E}\left|Y(t)-Y_{\Delta}(t)\right|^p
\leq &
D_1\left[1+2\left(\mathbb{E}|\eta(s)|^p+(\widehat{L}+\bar{\sigma}^2p\widehat{L})(T-s)\right)
e^{\frac{p+3p\widehat{L}-2\widehat{L}+\bar{\sigma}^2p(1-2\widehat{L}+3p\widehat{L})}{2}(T-s)}
\right]\Delta^{\frac{p}{2}},
\end{aligned}
$$
where $D_1=3^{\frac{3 p}{2}-2}\widehat{L}^{\frac{p}{2}}
\left(\tau^{\frac{p}{2}}+\bar{\sigma}^{2p}\tau^{\frac{p}{2}}+\bar{\sigma}^{p}\sqrt{(2p-1)!!
}\right)$.
\end{lemma}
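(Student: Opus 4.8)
The plan is to exploit the fact that the continuous $G$-EMSDE interpolation $Y$ agrees with its step process $Y_{\Delta}$ at the grid points, so that on each subinterval the difference $Y(t)-Y_{\Delta}(t)$ is a short-time increment with a frozen (constant) integrand. Concretely, fix $t\in[s,T]$ and let $n$ be the index with $t\in[t_n,t_{n+1})$; since $s$ is a multiple of $\Delta$ we have $t_n\geq s$, and because $Y_{\Delta}(t)=Y_n=Y(t_n)$, formula \eqref{eq:4} gives
$$
Y(t)-Y_{\Delta}(t)=\int_{t_n}^t f(Y_n,Y_n)\,dr+\int_{t_n}^t g(Y_n,Y_n)\,d\langle B\rangle(r)+\int_{t_n}^t h(Y_n,Y_n)\,dB(r).
$$
Applying the elementary inequality $|a+b+c|^p\leq 3^{p-1}(|a|^p+|b|^p+|c|^p)$ and taking $\mathbb{E}$ splits the estimate into three pieces, each treated separately.

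For the drift piece, since the integrand is constant in $r$, Hölder's inequality (or a direct computation) yields $\mathbb{E}|\int_{t_n}^t f(Y_n,Y_n)\,dr|^p\leq (t-t_n)^p\,\mathbb{E}|f(Y_n,Y_n)|^p$. For the $d\langle B\rangle$ piece I invoke Lemma \ref{lemma:2.1} on the subinterval $[t_n,t]$ to obtain the factor $\bar\sigma^{2p}(t-t_n)^p\,\mathbb{E}|g(Y_n,Y_n)|^p$. The delicate piece is the stochastic integral: using again that the integrand is frozen, $\int_{t_n}^t h(Y_n,Y_n)\,dB(r)=h(Y_n,Y_n)(B(t)-B(t_n))$, and since $h(Y_n,Y_n)$ is $\mathcal{H}_{t_n}$-measurable while $B(t)-B(t_n)$ is independent of $\mathcal{H}_{t_n}$, the independence property of the $G$-expectation lets me factor $\mathbb{E}[|h(Y_n,Y_n)|^p|B(t)-B(t_n)|^p]=\mathbb{E}|h(Y_n,Y_n)|^p\cdot\mathbb{E}|B(t)-B(t_n)|^p$. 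To produce the constant $\sqrt{(2p-1)!!}$ appearing in $D_1$, I bound the increment moment by Cauchy--Schwarz, $\mathbb{E}|B(t)-B(t_n)|^p\leq(\mathbb{E}|B(t)-B(t_n)|^{2p})^{1/2}$, and use the explicit even-moment identity $\mathbb{E}(B(t)-B(t_n))^{2p}=(2p-1)!!\,\bar\sigma^{2p}(t-t_n)^p$ for the $G$-normal increment, giving $\mathbb{E}|B(t)-B(t_n)|^p\leq\sqrt{(2p-1)!!}\,\bar\sigma^p(t-t_n)^{p/2}$.

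To close the three estimates I control the coefficient moments uniformly. From the linear growth \eqref{lgc:2}, raised to the power $p/2$ exactly as in \eqref{pthlg}, I get $\mathbb{E}|f(Y_n,Y_n)|^p\vee\mathbb{E}|g(Y_n,Y_n)|^p\vee\mathbb{E}|h(Y_n,Y_n)|^p\leq 3^{p/2-1}\widehat{L}^{p/2}(1+2\mathbb{E}|Y_n|^p)$, and Lemma \ref{lemma:3.2} (applied with initial time $s$ and using $s\leq t_n\leq T$) bounds $\mathbb{E}|Y_n|^p=\mathbb{E}|Y(t_n)|^p$ by $[\mathbb{E}|\eta(s)|^p+(\widehat{L}+\bar\sigma^2 p\widehat{L})(T-s)]\exp(\tfrac{p+3p\widehat{L}-2\widehat{L}+\bar\sigma^2 p(1-2\widehat{L}+3p\widehat{L})}{2}(T-s))$. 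Finally I use $t-t_n\leq\Delta\leq\tau$ to rewrite $(t-t_n)^p\leq\tau^{p/2}\Delta^{p/2}$ and $(t-t_n)^{p/2}\leq\Delta^{p/2}$, so that collecting the prefactors $3^{p-1}\cdot 3^{p/2-1}=3^{3p/2-2}$ together with the three bracketed terms $\tau^{p/2}+\bar\sigma^{2p}\tau^{p/2}+\bar\sigma^p\sqrt{(2p-1)!!}$ reproduces exactly $D_1\Delta^{p/2}$. Taking the supremum over $t\in[s,T]$ is harmless, since the bound on $\mathbb{E}|Y_n|^p$ is uniform in $n$, and this yields the claim.

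The main obstacle is the stochastic-integral term: one must resist simply applying Lemma \ref{lemma:2.2}, which would produce the constant $C(p)$ rather than $\sqrt{(2p-1)!!}$. The sharper constant forces the route through freezing the integrand, the $G$-independence factorization, and the explicit $2p$-th moment of the $G$-normal increment; care is also needed to justify the factorization within the sublinear $G$-expectation, via the independent-increment property of $G$-Brownian motion and, if necessary, a routine $C_{b,lip}$ approximation of $|\cdot|^p$.
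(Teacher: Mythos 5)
Your proposal is correct and follows essentially the same route as the paper: freeze the integrand on the subinterval $[t_n,t)$, split with the $3^{p-1}$ inequality, bound the stochastic-integral term via the independence factorization and the $2p$-th moment $(2p-1)!!\,\bar\sigma^{2p}(t-t_n)^p$ of the $G$-Brownian increment (the paper cites Theorem 5.3 of Peng for this), and close with \eqref{pthlg} and Lemma \ref{lemma:3.2}. The only cosmetic difference is that you treat the $d\langle B\rangle$ term with Lemma \ref{lemma:2.1} while the paper uses the moment bound $\mathbb{E}|\langle B\rangle(t)-\langle B\rangle(t_n)|^{2p}\le\bar\sigma^{4p}(t-t_n)^{2p}$; both give the same factor $\bar\sigma^{2p}\tau^{p/2}\Delta^{p/2}$.
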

\begin{proof}
We write $Y(t ; s, \eta(s))=Y(t)$ and $Y_{\Delta}(t ; s,
\eta(s))=Y_{\Delta}(t)$. Since $s$ is  a multiple of $\Delta$, we
assume that $s=n_s \Delta$, $n_s \in \mathbb{N}$. Given $t \geq s$,
let $n$ be the integer for which $t \in [s+n \Delta, s+(n+1)
\Delta)$. Then
$$
\begin{aligned}
Y(t)-Y_{\Delta}(t) & =Y(t)-Y_{n+n_s} \\
& =\int_{s+n \Delta}^t f\left(Y_{n+n_s}, Y_{n+n_s}\right) \mathrm{d}
r+\int_{s+n \Delta}^t g\left(Y_{n+n_s}, Y_{n+n_s}\right) \mathrm{d}
\langle B\rangle(r) +\int_{s+n \Delta}^t h\left(Y_{n+n_s},
Y_{n+n_s}\right) \mathrm{d} B(r) .
\end{aligned}
$$
With the help of the fundamental inequality $|a+b+c|^p \leq
3^{p-1}\left(|a|^p+|b|^p+|c|^p\right)$ for $p \geq 1$,  the
H\"{o}lder inequality, \eqref{pthlg} and Theorem 5.3 in Peng
\cite{Peng10}, we derive that
$$
\begin{aligned}
\mathbb{E}\left|Y(t)-Y_{\Delta}(t)\right|^p \leq & 3^{p-1} \mathbb{E}\left|f\left(Y_{n+n_s}, Y_{n+n_s}\right)\right|^p(t-(s+n \Delta))^p \\
& +3^{p-1} \mathbb{E}\left|g\left(Y_{n+n_s}, Y_{n+n_s}\right)\right|^p\left(\mathbb{E}|\langle B\rangle(t)-\langle B\rangle(s+n \Delta)|^{2 p}\right)^{\frac{1}{2}} \\
& +3^{p-1} \mathbb{E}\left|h\left(Y_{n+n_s}, Y_{n+n_s}\right)\right|^p\left(\mathbb{E}|B(t)-B(s+n \Delta)|^{2 p}\right)^{\frac{1}{2}}\\
\leq&  3^{p-1} \mathbb{E}\left|f\left(Y_{n+n_s},
Y_{n+n_s}\right)\right|^p \Delta^p
+3^{p-1} \sqrt{(2p-1)!! \bar{\sigma}^{2p}\Delta^{p}} \mathbb{E}\left|g\left(Y_{n+n_s}, Y_{n+n_s}\right)\right|^p  \\
&+3^{p-1} \sqrt{\bar{\sigma}^{4p}(t-(s+n \Delta))^{2p}}
\mathbb{E}\left|h\left(Y_{n+n_s}, Y_{n+n_s}\right)\right|^p
 \\
\leq & 3^{\frac{3 p}{2}-2}\widehat{L}^{\frac{p}{2}}
\left(\Delta^{p}+\bar{\sigma}^{2p}\Delta^{p}+\bar{\sigma}^{p}\sqrt{(2p-1)!!
}\Delta^{\frac{p}{2}}\right)
\left(1+2\mathbb{E}\left|Y_{n+n_s}\right|^p\right).
 \end{aligned}
$$
By Lemma \ref{lemma:3.2}, we obtain that
$$
\begin{aligned}
\sup _{s \leq t \leq T}\mathbb{E}\left|Y(t)-Y_{\Delta}(t)\right|^p
\leq &
D_1\left[1+2\left(\mathbb{E}|\eta(s)|^p+(\widehat{L}+\bar{\sigma}^2p\widehat{L})(T-s)\right)
e^{\frac{p+3p\widehat{L}-2\widehat{L}+\bar{\sigma}^2p(1-2\widehat{L}+3p\widehat{L})}{2}(T-s)}
\right]\Delta^{\frac{p}{2}}.
\end{aligned}
$$
The proof is complete.
\end{proof}

\begin{lemma}\label{lemma:3.6}  Let $y$ and $Y$ be the solution of the $G$-SDE \eqref{eq:2} and the
$G$-EMSDE \eqref{eq:4}, respectively. Assume that $s \geq 0$ is a
multiple of $\Delta$. Under  (H1), for any $T>s \geq 0$, it holds
that
$$\begin{aligned}
 \sup_{s \leq t \leq T}\mathbb{E}|y(t)-Y(t)|^p
 \leq &
4D_1(L+\bar{\sigma}^2pL)\Delta^{\frac{p}{2}}
e^{\left(4pL-4L+\frac{p}{2}+\bar{\sigma}^2(\frac{p}{2}+4p^2L-4pL)\right)(T-s)}
\\
&\times
\left[(T-s)+2\left(\mathbb{E}|\eta(s)|^p+(\widehat{L}+\bar{\sigma}^2p\widehat{L})(T-s)\right)
e^{\frac{p+3p\widehat{L}-2\widehat{L}+\bar{\sigma}^2p(1-2\widehat{L}+3p\widehat{L})}{2}(T-s)}
\right].
\end{aligned}$$
 \end{lemma}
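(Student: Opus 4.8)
The plan is to treat this as the Euler--Maruyama analogue of Lemma \ref{lamma:3.4}: whereas there one compares the delay solution $x$ with its non-delay companion $y$, here I compare the exact $G$-SDE solution $y$ with the continuous EM approximation $Y$, and the skeleton of the argument is the same. Writing $y(t;s,\eta(s))=y(t)$ and $Y(t;s,\eta(s))=Y(t)$, I would first subtract the integral form of \eqref{eq:2} from \eqref{eq:4} to get
$$
y(t)-Y(t)=\int_s^t\big(f(y(r),y(r))-f(Y_\Delta(r),Y_\Delta(r))\big)dr+\int_s^t(\cdots)\,d\langle B\rangle(r)+\int_s^t(\cdots)\,dB(r),
$$
and apply the $G$-It\^o formula to $|y(t)-Y(t)|^p$ exactly as in Lemmas \ref{lemma:3.1} and \ref{lamma:3.4}. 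Taking $\mathbb{E}$ removes the $dB$-martingale term; the elementary inequality $\langle a,b\rangle\leq\frac12(|a|^2+|b|^2)$ together with Lemma \ref{lemma:2.1} (which replaces the $d\langle B\rangle$-integral by a $\bar{\sigma}^2$-weighted $dr$-integral) then reduces the right-hand side to time integrals of $|y-Y|^{p-2}$ multiplied by $|y-Y|^2$ and by the squared coefficient differences.

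The single structural departure from Lemma \ref{lamma:3.4} appears at the point where (H1) is invoked. Because every coefficient in \eqref{eq:2} and \eqref{eq:4} is evaluated with its two arguments equal, each Lipschitz estimate collapses to $|f(y,y)-f(Y_\Delta,Y_\Delta)|^2\vee|g(\cdots)|^2\vee|h(\cdots)|^2\leq 2L|y(r)-Y_\Delta(r)|^2$, so the error is driven by $|y-Y_\Delta|$ instead of the delay increment $|x-x_\tau|$. I would then split $|y(r)-Y_\Delta(r)|^2\leq 2|y(r)-Y(r)|^2+2|Y(r)-Y_\Delta(r)|^2$, sending the first piece into the Gronwall coefficient and isolating the EM error $|Y-Y_\Delta|$ in the second. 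Linearizing the cross terms by Young's inequality $|y-Y|^{p-2}|Y-Y_\Delta|^2\leq\frac{p-2}{p}|y-Y|^p+\frac{2}{p}|Y-Y_\Delta|^p$ and collecting the drift and $d\langle B\rangle$ contributions produces
$$
\mathbb{E}|y(t)-Y(t)|^p\leq C_1\int_s^t\mathbb{E}|y(r)-Y(r)|^p\,dr+4(L+\bar{\sigma}^2pL)\int_s^t\mathbb{E}|Y(r)-Y_\Delta(r)|^p\,dr,
$$
where $C_1=\tfrac12\big[(p+8pL-8L)+\bar{\sigma}^2(p+8p^2L-8pL)\big]$ is exactly the exponent stated in the lemma, and $4(L+\bar{\sigma}^2pL)$ is the advertised prefactor.

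To finish, I would bound the EM error integral by Lemma \ref{lemma:3.5}; this is precisely where the hypothesis that $s$ is a multiple of $\Delta$ is needed, since that lemma is stated only under it. Inserting the $\Delta^{p/2}$-estimate of Lemma \ref{lemma:3.5} (whose right-hand side carries the $\mathbb{E}|\eta(s)|^p$ term and the Lemma \ref{lemma:3.2} exponential rate $\tfrac12[p+3p\widehat{L}-2\widehat{L}+\bar{\sigma}^2p(1-2\widehat{L}+3p\widehat{L})]$) and applying the Gronwall inequality to the displayed integral inequality yields the asserted product, with the factor $e^{C_1(T-s)}$ supplied by Gronwall. I expect the only real difficulty to be bookkeeping rather than analysis: one must keep the two distinct exponential rates separate and track how $D_1$, the prefactor $4(L+\bar{\sigma}^2pL)$ and the time factor assemble into the final bound, but every analytic ingredient is already furnished by Lemmas \ref{lemma:2.1}, \ref{lemma:3.2} and \ref{lemma:3.5} and by the template of Lemma \ref{lamma:3.4}.
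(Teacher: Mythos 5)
Your proposal is correct and follows essentially the same route as the paper: $G$-It\^{o} formula on $|y-Y|^p$, Lemma \ref{lemma:2.1} for the $d\langle B\rangle$ term, the Lipschitz collapse to $2L|y(r)-Y_\Delta(r)|^2$, the split $|y-Y_\Delta|^2\le 2|y-Y|^2+2|Y-Y_\Delta|^2$, Young/H\"{o}lder to linearize $|y-Y|^{p-2}|Y-Y_\Delta|^2$, Lemma \ref{lemma:3.5} for the EM increment, and Gronwall, with exactly the Gronwall coefficient $\frac{p}{2}+4pL-4L+\bar{\sigma}^2(\frac{p}{2}+4p^2L-4pL)$ and prefactor $4(L+\bar{\sigma}^2pL)$ obtained in the paper. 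No gaps.
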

\begin{proof} Write $y(t ; s, \eta(s))=y(t), Y(t ; s, \eta(s))=Y(t)$. For any
$t \in[s, T]$, we have
$$
\begin{aligned}
y(t)-Y(t)= & \int_s^t (f(y(r), y(r))-f (Y_{\Delta}(r),
Y_{\Delta}(r))) dr +\int_s^t (g(y(r), y(r))-g (Y_{\Delta}(r),
Y_{\Delta}(r)) d
\langle B \rangle(r)\\
&
 +\int_s^t(h(y(r),
y(r))-h(Y_{\Delta}(r), Y_{\Delta}(r))d B(r).
\end{aligned}
$$
By the $G$-It\^{o} formula, the elementary inequality, (H1) and
Lemmas \ref{lemma:2.1} and \ref{lemma:2.2}, we get
$$
\begin{aligned}
\mathbb{E}|y(t)-Y(t)|^p
\leq &  \frac{p}{2}\mathbb{E}\int_s^t|y(r)-Y(r)|^{p-2}\left(|y(r)-Y(r)|^2+ 2L|y(r)-Y_{\Delta}(r)|^2\right) dr \\
& +\bar{\sigma}^2 \mathbb{E}\left[\int_s^t \left(p(p-2)L|y(r)-Y(r)|^{p-2} |y(r)-Y_{\Delta}(r)|^2 \right.\right. \\
& +pL|y(r)-Y(r)|^{p-2} |y(r)-Y_{\Delta}(r)|^2  \\
& \left.\left.+\frac{p}{2}|y(r)-Y(r)|^{p-2}(|y(r)-Y(r)|^2+
2L|y(r)-Y_{\Delta}(r)|^2 )\right) dr\right]
\\
\leq &
\left[\frac{p}{2}(1+\bar{\sigma}^2)+2(pL+\bar{\sigma}^2p^2L)\right]\mathbb{E}\int_s^t|y(r)-Y(r)|^{p}dr
\\
&+2(pL+\bar{\sigma}^2p^2L)\mathbb{E}\int_s^t|y(r)-Y(r)|^{p-2}|Y(r)-Y_{\Delta}(r)|^2dr.
\end{aligned}
$$
With the help of the  H\"{o}lder inequality and Lemma
\ref{lemma:3.5}, together with $p\geq 2$, we have
$$
\begin{aligned}
&\mathbb{E}|y(t)-Y(t)|^p \\ \leq &
(4pL-4L+\frac{p}{2}+\bar{\sigma}^2(\frac{p}{2}+4p^2L-4pL))\mathbb{E}\int_s^t|y(r)-Y(r)|^{p}dr
\\
&+4(L+\bar{\sigma}^2pL)\mathbb{E}\int_s^t|Y(r)-Y_{\Delta}(r)|^pdr
\\
\leq
&(4pL-4L+\frac{p}{2}+\bar{\sigma}^2(\frac{p}{2}+4p^2L-4pL))\mathbb{E}\int_s^t|y(r)-Y(r)|^{p}dr
+4D_1(L+\bar{\sigma}^2pL)\Delta^{\frac{p}{2}}
\\
&\times
\left[(t-s)+2\left(\mathbb{E}|\eta(s)|^p+(\widehat{L}+\bar{\sigma}^2p\widehat{L})(t-s)\right)
e^{\frac{p+3p\widehat{L}-2\widehat{L}+\bar{\sigma}^2p(1-2\widehat{L}+3p\widehat{L})}{2}(t-s)}
\right].
\end{aligned}
$$
Applying the Gronwall inequality, we get
$$
\begin{aligned}
&\sup_{s \leq t \leq T}\mathbb{E}|y(t)-Y(t)|^p \\ \leq &
4D_1(L+\bar{\sigma}^2pL)\Delta^{\frac{p}{2}}
e^{\left(4pL-4L+\frac{p}{2}+\bar{\sigma}^2(\frac{p}{2}+4p^2L-4pL)\right)(T-s)}
\\
&\times
\left[(T-s)+2\left(\mathbb{E}|\eta(s)|^p+(\widehat{L}+\bar{\sigma}^2p\widehat{L})(T-s)\right)
e^{\frac{p+3p\widehat{L}-2\widehat{L}+\bar{\sigma}^2p(1-2\widehat{L}+3p\widehat{L})}{2}(T-s)}
\right].
\end{aligned}
$$
\end{proof}

The theorem below indicates that if $\Delta$ is sufficiently small,
then the EM method can reproduce the practical exponential
stability in $p$-th moment for the underlying $G$-SDE.

\begin{theorem} \label{th:7} Assume that the $G$-SDE \eqref{eq:2} is practically
exponential stable in $p$-th moment and that (H1) holds. For any
given $\delta \in(0,1)$, let $T=\left(\left\lfloor\frac{\ln
\left(2^{p-1} M_2 / \delta\right)}{\lambda_2
\Delta}\right\rfloor+1\right) \Delta.$ If $\Delta$ satisfies
$$
U(\Delta):=\left[\delta+
2^{p+2}D_1(L+\bar{\sigma}^2pL)e^{\left(p+4pL-8L+\bar{\sigma}^2(p+8p^2L-8pL)\right)T}
e^{\left(p+3p\widehat{L}-2\widehat{L}+\bar{\sigma}^2(p-2p\widehat{L}+3p^2\widehat{L})\right)T}\right]
\Delta^{\frac{p}{2}}<1,
$$
then the $G$-EMSDE \eqref{eq:4} is also practically exponential
stable in $p$-th moment.
\end{theorem}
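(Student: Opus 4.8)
The plan is to mirror the proof of Theorem~\ref{th:4}, now interchanging the comparison estimate: Lemma~\ref{lamma:3.4} (between $x$ and $y$) is replaced by Lemma~\ref{lemma:3.6} (between $y$ and $Y$), the delay $\tau$ is replaced by the step size $\Delta$, and the assumed practical stability of the $G$-SDE \eqref{eq:2} plays the role that the $G$-SDDE stability played there. First I would set $Y(t)=Y(t;0,\xi(0))$, $y(t)=y(t;0,\xi(0))$ and split, via the elementary inequality,
\[
\mathbb{E}|Y(t)|^p \le 2^{p-1}\mathbb{E}|y(t)|^p + 2^{p-1}\mathbb{E}|y(t)-Y(t)|^p,
\]
estimating the first term by \eqref{def-pes-gsde} and the second by Lemma~\ref{lemma:3.6}. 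The decisive structural feature is the definition $T=\big(\lfloor \ln(2^{p-1}M_2/\delta)/(\lambda_2\Delta)\rfloor+1\big)\Delta$: the floor forces $T$ to be an integer multiple of $\Delta$, so that simultaneously $2^{p-1}M_2e^{-\lambda_2 T}\le\delta$ and every node $iT$ remains a grid point. This last fact is exactly the hypothesis ``$s$ a multiple of $\Delta$'' required to invoke Lemma~\ref{lemma:3.6} at each restart.

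Working on the window $[(i+1)T-\Delta,\,(i+2)T-\Delta]$ with the above decomposition applied at $s=iT$, the decay factor $e^{-\lambda_2(t-iT)}$ contributes the $\delta$-term while Lemma~\ref{lemma:3.6} contributes an $O(\Delta^{p/2})$ error; collecting these I expect a bound of the form $\sup\mathbb{E}|Y(t)|^p \le U(\Delta)\,\mathbb{E}|Y(iT)|^p + k_3$, where $k_3$ absorbs the $N$-type (initial-value-free) contributions. Since $U(\Delta)<1$ by assumption, I would fix $\gamma_2>0$ by $U(\Delta)=e^{-\gamma_2 T}$. The key tool for iterating is the Markov/flow property of the continuous EM scheme, $Y(t;0,\xi(0))=Y(t;iT,Y(iT))$ for $t\ge iT$, which holds precisely because $iT$ is a grid point; restarting both $y$ and $Y$ from the single value $Y(iT)$ (no delay segment is needed, as \eqref{eq:2} is nondelay) and bounding $\mathbb{E}|Y(iT)|^p$ by the supremum over the preceding window yields the one-step recursion
\[
\sup_{(i+1)T-\Delta\le t\le(i+2)T-\Delta}\mathbb{E}|Y(t)|^p \le e^{-\gamma_2 T}\sup_{iT-\Delta\le t\le iT}\mathbb{E}|Y(t)|^p + k_3.
\]

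I would then iterate, summing the geometric series in $k_3$, to obtain on the $(i+1)$-st window an estimate $\le e^{-(i+1)\gamma_2 T}\mathbb{E}|\xi(0)|^p + k_3/(1-e^{-\gamma_2 T})$; combining this with the direct bound on the initial window $[0,T-\Delta]$ coming from the first display, and rewriting $e^{-(i+1)\gamma_2 T}$ as $e^{-\gamma_2 t}$ up to the bounded constant $e^{\gamma_2 T}$, delivers \eqref{def-pes-EMgsde} with explicit $L_2,\gamma_2,k_2$. I expect the main difficulty to be discretisation bookkeeping rather than any new inequality: one must verify the flow/restart identity for the continuous interpolation exactly at the multiples $iT$, and must track that $T$ itself depends on $\Delta$. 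The latter is benign because $T\to \ln(2^{p-1}M_2/\delta)/\lambda_2$ as $\Delta\to0$, so the prefactors $e^{(\cdots)T}$ stay bounded and $U(\Delta)=O(\Delta^{p/2})\to 0$; this is what makes the standing hypothesis $U(\Delta)<1$ attainable for all sufficiently small $\Delta$.
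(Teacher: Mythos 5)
Your proposal follows essentially the same route as the paper's proof: restart the auxiliary $G$-SDE from the EM value $Y(iT)$ at the grid points $iT$ (legitimate because $T$ is a multiple of $\Delta$), split $\mathbb{E}|Y(t)|^p\le 2^{p-1}\mathbb{E}|y(t)|^p+2^{p-1}\mathbb{E}|y(t)-Y(t)|^p$, bound the first term by the assumed stability of \eqref{eq:2} and the second by the $O(\Delta^{p/2})$ comparison of Lemma \ref{lemma:3.6}, then iterate the resulting contraction $U(\Delta)=e^{-\gamma_2 T}$ and sum the geometric series. The only cosmetic difference is your $\Delta$-offset windows (the paper uses $[(i+1)T,(i+2)T]$ directly, since no delay segment is needed), and your observation that $T\to\ln(2^{p-1}M_2/\delta)/\lambda_2$ as $\Delta\to 0$ is a correct, if unstated-in-the-paper, justification that $U(\Delta)<1$ is attainable.
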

\begin{proof}
Since $T$ is the multiple of $\Delta$, according to the flow
property of  the $G$-EMSDE \eqref{eq:4}, for arbitrary $i=0,1,2,
\ldots$, we can define $Y(t)=Y(t ; 0, \xi(0))=Y(t ; i T, Y(i T))$
and $y(t)=y(t ; i T, Y(i T))$. Furthermore, due to the autonomous
property of the $G$-SDE \eqref{eq:2}, the practical exponential
stability \eqref{def-pes-gsde} implies
$$
\mathbb{E}|y(t)|^p \leq M_2 \mathbb{E}|Y(i T)|^p
e^{-\lambda_2(t-iT)}+d_2, \quad \forall t \geq iT .
$$
Using the elementary inequality and Lemma \ref{lemma:3.5}, we have
\begin{eqnarray}\label{ineq:16}
&&\mathbb{E}|Y(t)|^p \nonumber \\&\leq &  2^{p-1} \mathbb{E}|y(t)|^p+2^{p-1} \mathbb{E}|y(t)-Y(t)|^p \nonumber \\
&\leq & 2^{p-1} M_2 \mathbb{E}|Y(iT)|^p e^{-\lambda_2(t-iT)} + 2^{p-1}d_2 \nonumber \\
 &&+2^{p+1}D_1(L+\bar{\sigma}^2pL)e^{\left(\frac{p}{2}+4pL-4L+\bar{\sigma}^2(\frac{p}{2}+4p^2L-4pL)\right)(t-iT)}
 \nonumber \\
&&\times
\left[(t-iT)+2\left(\mathbb{E}|Y(iT)|^p+(\widehat{L}+\bar{\sigma}^2p\widehat{L})(t-iT)\right)
e^{\frac{p+3p\widehat{L}-2\widehat{L}+\bar{\sigma}^2p(1-2\widehat{L}+3p\widehat{L})}{2}(t-iT)}
\right]\Delta^{\frac{p}{2}}\nonumber \\
&\leq & 2^{p-1} M_2 \mathbb{E}|Y(iT)|^p e^{-\lambda_2(t-iT)} + 2^{p-1}d_2 \nonumber \\
 &&+2^{p+1}D_1(L+\bar{\sigma}^2pL)e^{\left(\frac{p}{2}+4pL-4L+\bar{\sigma}^2(\frac{p}{2}+4p^2L-4pL)\right)(t-iT)}
 \nonumber \\
&&\times
\left[\left(2\mathbb{E}|Y(iT)|^p+(1+2\widehat{L}+2\bar{\sigma}^2p\widehat{L})(t-iT)\right)
e^{\frac{p+3p\widehat{L}-2\widehat{L}+\bar{\sigma}^2p(1-2\widehat{L}+3p\widehat{L})}{2}(t-iT)}
\right]\Delta^{\frac{p}{2}},
\end{eqnarray}
By the definition of $T$, we get $$ 2^{p-1} M_2 e^{-\lambda_2 T}
\leq \delta.
$$
This, together with \eqref{ineq:16}, yields that
\begin{eqnarray*}
\sup_{(i+1) T \leq t \leq(i+2) T} \mathbb{E}|Y(t)|^p \leq
U(\Delta)\mathbb{E}|Y(iT)|^p+d_4,
\end{eqnarray*}
where
$$
\begin{aligned}
d_4=& 2^{p-1}d_2
+2^{p+1}D_1T\tau^{\frac{p}{2}}(L+\bar{\sigma}^2pL)(1+2\widehat{L}+2\bar{\sigma}^2p\widehat{L})
\\
&\times e^{\left(p+4pL-8L+\bar{\sigma}^2(p+8p^2L-8pL)\right)T}
e^{\left(p+3p\widehat{L}-2\widehat{L}+\bar{\sigma}^2(p-2p\widehat{L}+3p^2\widehat{L})\right)T}.
\end{aligned}
$$
Since $U(\Delta)<1$, there is a number $\gamma_2>0$ that makes
$U(\Delta)=e^{-\gamma_2 T}$. Thus, by iteration, we can get that
$$
\begin{aligned}
\sup _{(i+1) T \leq t \leq(i+2)T} \mathbb{E}|Y(t)|^p & \leq e^{-\gamma_2 T} \mathbb{E}|Y(iT)|^p +d_4\\
& \leq e^{-\gamma_2 T} \sup _{iT \leq t \leq(i+1)T} \mathbb{E}|Y(t)|^p +d_4\\
& \leq e^{-\gamma_2(i+1)T} \sup _{0 \leq t \leq
T}\mathbb{E}|Y(t)|^p+d_4(e^{-\gamma_2i
T}+e^{-\gamma_2(i-1)T}+\cdots+e^{-\gamma_2T}+1).
\end{aligned}
$$
Using \eqref{ineq:16} with $i=0$, we have
$$
\begin{aligned}
\sup _{0 \leq t \leq T} \mathbb{E}|Y(t)|^p \leq &
2^{p-1} M_2 \mathbb{E}|\xi(0)|^p  + 2^{p-1}d_2 \nonumber \\
 &+2^{p+1}D_1(L+\bar{\sigma}^2pL)e^{\left(\frac{p}{2}+4pL-4L+\bar{\sigma}^2(\frac{p}{2}+4p^2L-4pL)\right)T}
 \nonumber \\
&\times
\left[\left(2\mathbb{E}|\xi(0)|^p+T(1+2L+2\bar{\sigma}^2pL)\right)
e^{\frac{p+3pL-2L+\bar{\sigma}^2p(1-2L+3pL)}{2}T}
\right]\Delta^{\frac{p}{2}}
\\ \leq& (2^{p-1} M_2+ e^{-\gamma_2
T})\mathbb{E}|\xi(0)|^p+d_4.
\end{aligned}
$$
Hence
$$
\begin{aligned}
\sup _{(i+1) T \leq t \leq(i+2) T} \mathbb{E}|Y(t)|^p \leq&
\left(2^{p-1} M_2+e^{-\gamma_2 T}\right) \mathbb{E}|\xi(0)|^p
e^{-\gamma_2(i+1) T}
 \\& +d_4(e^{-\gamma_2(i+1)
T}+e^{-\gamma_2i
T}+e^{-\gamma_2(i-1)T}+\cdots+e^{-\gamma_2T}+1)\\
\leq& \left(2^{p-1} M_2e^{\gamma_2 T}+1\right) \mathbb{E}|\xi(0)|^p
e^{-\gamma_2(i+2) T}
\\
& +d_4(e^{-\gamma_2(i+1) T}+e^{-\gamma_2i
T}+e^{-\gamma_2(i-1)T}+\cdots+e^{-\gamma_2T}+1).
\end{aligned}
$$
Consequently, for any $t \geq 0$, we can deduce that
$$
\mathbb{E}|Y(t)|^p \leq L_2\mathbb{E}|\xi(0)|^p e^{-\gamma_2t}+k_2,
$$
where $L_2=(2^{p-1} M_2e^{\gamma_2 T}+1)$ and
$k_2=\frac{d_4}{1-e^{-\gamma_2 T}}$, then we complete the proof.
\end{proof}

\subsection{$G$-EMSDDE  shares stability with $G$-EMSDE}

\begin{lemma} \label{lemma:4.8} Let $X$ and $Y$ be the solution of the $G$-EMSDDE
\eqref{eq:3} and the $G$-EMSDE \eqref{eq:4}, respectively. Assume
that $s \geq 0$ is a multiple of $\Delta$. Under (H1), for any $T>s
\geq 0$, it holds that
$$
\begin{aligned}
 \sup _{s \leq t \leq T} \mathbb{E} |X(t; s, \eta)-Y(t; s, \eta(s))|^p  \leq
d_5\tau e^{\frac{p+5pL-4L+\bar{\sigma}^2(p+5p^2L-4pL)}{2}(T-s)},
\end{aligned}
$$
where $d_5=2(L+\bar{\sigma}^2pL) \left[\tau^{\frac{p}{2}-1}\left(K_1
\|\eta\|_{\mathbb{E}}^{p}
e^{\frac{p+2p\widehat{L}+\bar{\sigma}^2(p+p^2\widehat{L}+p\widehat{L})}{2}(T-s)}+N_1(T-s)\right)
+ (K_2\|\eta\|_{\mathbb{E}}^{p}+N_2)\right]$.
\end{lemma}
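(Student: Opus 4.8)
The plan is to reproduce, for the continuous EM interpolations, the argument used for the exact solutions in Lemma~\ref{lamma:3.4}; the only structural change is that all coefficients are now evaluated at the step processes $X_\Delta,Y_\Delta$ instead of at $X,Y$. Set $Z(t):=X(t)-Y(t)$ and
$$
\Delta f(r):=f(X_\Delta(r),X_\Delta(r-\tau))-f(Y_\Delta(r),Y_\Delta(r)),
$$
with $\Delta g,\Delta h$ defined analogously, so that $Z(t)=\int_s^t\Delta f\,dr+\int_s^t\Delta g\,d\langle B\rangle(r)+\int_s^t\Delta h\,dB(r)$. Applying the $G$-It\^o formula to $|Z(t)|^p$, taking $\mathbb{E}$ so that the $dB$-integral vanishes as a $G$-martingale, applying the elementary inequality to $\langle Z,\Delta f\rangle$, $\langle Z,\Delta g\rangle$, $\langle Z,\Delta h\rangle$, and bounding the $d\langle B\rangle$-integral by Lemma~\ref{lemma:2.1} (which produces the factor $\bar{\sigma}^2$), I obtain exactly the drift-plus-quadratic-variation expansion from the proof of Lemma~\ref{lamma:3.4}, now carrying $|\Delta f|^2,|\Delta g|^2,|\Delta h|^2$.

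I would then insert (H1), giving $|\Delta f(r)|^2\vee|\Delta g(r)|^2\vee|\Delta h(r)|^2\le L\big(|X_\Delta(r)-Y_\Delta(r)|^2+|X_\Delta(r-\tau)-Y_\Delta(r)|^2\big)$, and split the delay term via $|X_\Delta(r-\tau)-Y_\Delta(r)|^2\le 2|X_\Delta(r)-Y_\Delta(r)|^2+2|X_\Delta(r)-X_\Delta(r-\tau)|^2$. A Young inequality with exponents $\tfrac{p}{p-2},\tfrac{p}{2}$ on each product $|Z(r)|^{p-2}|\,\cdot\,|^2$ reduces the estimate to three groups: $|Z(r)|^p$, the step difference $|X_\Delta(r)-Y_\Delta(r)|^p$, and the delay shift $|X_\Delta(r)-X_\Delta(r-\tau)|^p$. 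A short computation gives the drift coefficients $\tfrac{p+5pL-10L}{2}$ and $3L$ for the first two groups (and $\tfrac{p+5p^2L-10pL}{2}$, $3pL$ for their $\bar{\sigma}^2$-counterparts), while the shift carries $2L$ in the drift and $2\bar{\sigma}^2pL$ in the quadratic variation.

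The genuinely new point, and the main obstacle, is that the integrands are the \emph{step} processes while the It\^o prefactor is the continuous $Z(r)$, so $|Z(r)|^p$ cannot be read off directly from the Lipschitz term. I would resolve this by the grid identity: since $s$ is a multiple of $\Delta$, for $r\in[t_n,t_{n+1})$ one has $X_\Delta(r)=X_n=X(t_n)$, $Y_\Delta(r)=Y(t_n)$ and $X_\Delta(r-\tau)=X(t_n-\tau)$, with $s\le t_n\le r$. Hence $\mathbb{E}|X_\Delta(r)-Y_\Delta(r)|^p=\mathbb{E}|Z(t_n)|^p\le\phi(r)$, where $\phi(t):=\sup_{s\le u\le t}\mathbb{E}|X(u)-Y(u)|^p$, so the step-difference group merges into the same running supremum as $\mathbb{E}|Z(r)|^p$ and the two coefficient contributions recombine to the single Gronwall exponent $\tfrac{p+5pL-4L+\bar{\sigma}^2(p+5p^2L-4pL)}{2}$ of Lemma~\ref{lamma:3.4}. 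This step invokes no interpolation-error estimate, which is exactly why no $D_1$ appears in $d_5$. For the forcing I use $\mathbb{E}|X_\Delta(r)-X_\Delta(r-\tau)|^p=\mathbb{E}|X(t_n)-X(t_n-\tau)|^p$ and bound it by Lemma~\ref{lemma:3.3}, treating $t_n\in[s,s+\tau]$ (producing the $K_2\|\eta\|_{\mathbb{E}}^p+N_2$ part of $d_5$) separately from $t_n\ge s+\tau$ (producing the $\tau^{p/2}(K_1\|\eta\|_{\mathbb{E}}^p e^{\cdots}+N_1)$ part).

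Collecting the three groups yields $\phi(t)\le\tfrac{p+5pL-4L+\bar{\sigma}^2(p+5p^2L-4pL)}{2}\int_s^t\phi(r)\,dr+(2L+2\bar{\sigma}^2pL)\,\mathbb{E}\int_s^t|X_\Delta(r)-X_\Delta(r-\tau)|^p\,dr$, and the Gronwall inequality then gives the asserted bound $d_5\tau\,e^{\frac{p+5pL-4L+\bar{\sigma}^2(p+5p^2L-4pL)}{2}(T-s)}$, with the shift integral handled by the two regimes of Lemma~\ref{lemma:3.3} exactly as in Lemma~\ref{lamma:3.4}. The remaining work is only the bookkeeping of the interval-length factors generated by those two regimes, not a new difficulty.
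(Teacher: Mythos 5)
Your proposal is correct and follows essentially the same route as the paper: the $G$-It\^{o} expansion with coefficients at the step processes, the Young/H\"{o}lder reduction to the three groups $|Z|^p$, $|X_\Delta-Y_\Delta|^p$ and $|X_\Delta(\cdot)-X_\Delta(\cdot-\tau)|^p$, the absorption of the step-difference term into the running supremum $\sup_{s\le u\le r}\mathbb{E}|X(u)-Y(u)|^p$ (recombining the drift constants into $\tfrac{p+5pL-4L+\bar{\sigma}^2(p+5p^2L-4pL)}{2}$), the two-regime application of Lemma~\ref{lemma:3.3} to the delay shift, and Gronwall. The grid identity you single out as the key point is exactly how the paper justifies merging the $3(L+\bar{\sigma}^2pL)$ term into the supremum, so no gap remains.
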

\begin{proof}
 Write
$X(t ; s, \eta)=X(t), Y(t ; s, \eta(s))=Y(t)$, we have
$$
\begin{aligned}
X(t)-Y(t)= & \int_s^t\left(f\left(X_{\Delta}(r),
X_{\Delta}(r-\tau)\right)- f\left(Y_{\Delta}(r),
Y_{\Delta}(r)\right)\right) dr \\&
+\int_s^t\left(g\left(X_{\Delta}(r),
X_{\Delta}(r-\tau)\right)-g\left(Y_{\Delta}(r),
Y_{\Delta}(r)\right)\right) d\langle B \rangle(r)
 \\& +\int_s^t\left(h\left(X_{\Delta}(r),
X_{\Delta}(r-\tau)\right)-h\left(Y_{\Delta}(r),
Y_{\Delta}(r)\right)\right) dB(r) .
\end{aligned}
$$
In the same way as in the proof of Lemma \ref{lemma:3.6} we can show
that
$$
\begin{aligned}
&\mathbb{E}|X(t)-Y(t)|^p
\\
\leq &
\frac{p+p\bar{\sigma}^2}{2}\mathbb{E}\int_s^t|X(r)-Y(r)|^{p}dr +
\frac{pL+p^2L\bar{\sigma}^2}{2}
\mathbb{E}\int_s^t|X(t)-Y(t)|^{p-2}|X_{\Delta}(r)-Y_{\Delta}(r)|^2dr \\
&+ \frac{pL+p^2L\bar{\sigma}^2}{2}
\mathbb{E}\int_s^t|X(t)-Y(t)|^{p-2}|X_{\Delta}(r-\tau)-Y_{\Delta}(r)|^2dr.
\end{aligned}
$$
Applying the H\"{o}lder inequality, we can get that
$$
\begin{aligned}
&\mathbb{E}|X(t)-Y(t)|^p
\\
\leq &
\frac{p+5pL-10L+\bar{\sigma}^2(p+5p^2L-10pL)}{2}\mathbb{E}\int_s^t|X(r)-Y(r)|^{p}dr
\\
&+
3(L+\bar{\sigma}^2pL)\mathbb{E}\int_s^t|X_{\Delta}(r)-Y_{\Delta}(r)|^pdr
+
2(L+\bar{\sigma}^2pL)\mathbb{E}\int_s^t|X_{\Delta}(r)-X_{\Delta}(r-\tau)|^pdr
\end{aligned}
$$
By Lemma \ref{lemma:3.3}, for any $t_1\in[s,T]$, we derive that
$$
\begin{aligned}
&\sup_{s\leq t\leq t_1}\mathbb{E}|X(t)-Y(t)|^p
\\
\leq &
\frac{p+5pL-4L+\bar{\sigma}^2(p+5p^2L-4pL)}{2}\mathbb{E}\int_s^{t_1}\sup_{s\leq
u\leq r}|X(u)-Y(u)|^{p}dr
\\
& +
2(L+\bar{\sigma}^2pL)\mathbb{E}\int_s^{t_1}|X_{\Delta}(r)-X_{\Delta}(r-\tau)|^pdr
\\
\leq &
\frac{p+5pL-4L+\bar{\sigma}^2(p+5p^2L-4pL)}{2}\mathbb{E}\int_s^{t_1}\sup_{s\leq
u\leq r}|X(u)-Y(u)|^{p}dr
\\
& + 2(L+\bar{\sigma}^2pL) \left[\tau^{\frac{p}{2}}\left(K_1
\|\eta\|_{\mathbb{E}}^{p}
e^{\frac{p+2p\widehat{L}+\bar{\sigma}^2(p+p^2\widehat{L}+p\widehat{L})}{2}(t_1-s)}+N_1(s,
t_1)(t_1-s)\right)+ \tau (K_2\|\eta\|_{\mathbb{E}}^{p}+N_2)\right].
\end{aligned}
$$
Applying the Gronwall inequality, we obtain
$$
\begin{aligned}
 \sup_{s\leq t\leq T}\mathbb{E}|X(t)-Y(t)|^p \leq
d_5\tau e^{\frac{p+5pL-4L+\bar{\sigma}^2(p+5p^2L-4pL)}{2}(T-s)}.
\end{aligned}
$$
The proof is complete.
\end{proof}

 In the following, we show that the $G$-EMSDDE \eqref{eq:3}
shares the $p$-th moment practical exponential stability with the
$G$-EMSDE \eqref{eq:4}, provided that the delay $\tau$ is
sufficiently small.

\begin{theorem}\label{th:9} Assume that the $G$-EMSDE \eqref{eq:4} is
practically exponential stable in $p$-th moment and that  (H1)
holds. For any given $\delta \in(0,1)$, let
$$
T=\left(\left\lfloor\frac{\ln \left(2^{p-1} L_2 /
\delta\right)}{\gamma_2 \tau}\right\rfloor+2\right) \tau .
$$
If $\tau$ satisfies
$$
V(\tau):=\delta+2^{p}(L+\bar{\sigma}^2pL)\left(\tau^{\frac{p}{2}}K_1
e^{\left(p+2p\widehat{L}+\bar{\sigma}^2(p+p^2\widehat{L}+p\widehat{L})\right)T}+\tau
K_2\right)e^{\left(p+5pL-4L+\bar{\sigma}^2(p+5p^2L-4pL)\right)T}<1,
$$
then the $G$-EMSDDE \eqref{eq:3} is also practically exponential
stable in $p$-th moment.
\end{theorem}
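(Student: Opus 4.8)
The plan is to mirror the proof of Theorem \ref{th:4}, but with the roles of the delayed and non-delayed objects interchanged: here the non-delayed $G$-EMSDE \eqref{eq:4} is the one assumed stable, and the delayed $G$-EMSDDE \eqref{eq:3} is the target. Since $T$ is a multiple of $\tau$ and $\tau=m\Delta$, every point $iT$ is a grid point, so the flow property of \eqref{eq:3} lets me write $X(t;0,\xi)=X(t;iT,X_{iT})$ for $t\ge iT$, where $X_{iT}$ is the discrete EM history. First I would fix $\delta\in(0,1)$, take $T$ as in the statement, and compare $X$ restarted at $iT$ with the non-delayed EM solution restarted from the single value $X(iT)$, namely $Y(t;iT,X(iT))$, through $\mathbb{E}|X(t)|^p\le 2^{p-1}\mathbb{E}|Y(t;iT,X(iT))|^p+2^{p-1}\mathbb{E}|X(t;iT,X_{iT})-Y(t;iT,X(iT))|^p$.

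For the first term I invoke the assumed practical stability \eqref{def-pes-EMgsde} of \eqref{eq:4}, giving $\mathbb{E}|Y(t;iT,X(iT))|^p\le L_2\,\mathbb{E}|X(iT)|^p e^{-\gamma_2(t-iT)}+k_2$. For the second term I apply Lemma \ref{lemma:4.8} with $s=iT$, $\eta=X_{iT}$ over the interval $[iT,(i+2)T-\tau]$ (so that $T-s=2T-\tau$ and $N_1$ is read as $N_1(iT,(i+2)T-\tau)$, per the remark after Lemma \ref{lemma:3.3}), bounding it by $d_5\tau\,e^{\frac{p+5pL-4L+\bar{\sigma}^2(p+5p^2L-4pL)}{2}(2T-\tau)}$ with $d_5$ affine in $\|X_{iT}\|_{\mathbb{E}}^p$. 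Then I restrict to $t\in[(i+1)T-\tau,(i+2)T-\tau]$. On this window the definition of $T$ yields $2^{p-1}L_2 e^{-\gamma_2(T-\tau)}\le\delta$ (this is where the floor in $T$ is consumed), and I bound each half-exponential $e^{\frac{\cdot}{2}(2T-\tau)}$ from above by the full exponential $e^{\cdot\,T}$ that appears in $V(\tau)$, using $\tfrac12(2T-\tau)\le T$. Collecting the $\|X_{iT}\|_{\mathbb{E}}^p$-proportional contributions and using $\mathbb{E}|X(iT)|^p\le\|X_{iT}\|_{\mathbb{E}}^p$, the coefficient of $\|X_{iT}\|_{\mathbb{E}}^p$ becomes exactly $\delta+2^{p}(L+\bar{\sigma}^2pL)(\tau^{\frac{p}{2}}K_1 e^{(\cdots)T}+\tau K_2)e^{(\cdots)T}=V(\tau)$, while the $N_1,N_2$- and $k_2$-pieces assemble into a constant $d_6$; thus $\sup_{(i+1)T-\tau\le t\le(i+2)T-\tau}\mathbb{E}|X(t)|^p\le V(\tau)\|X_{iT}\|_{\mathbb{E}}^p+d_6$.

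Because $T\ge\tau$ gives $[(i+1)T-\tau,(i+1)T]\subset[(i+1)T-\tau,(i+2)T-\tau]$, this upgrades to the clean one-step recursion $\|X_{(i+1)T}\|_{\mathbb{E}}^p\le V(\tau)\|X_{iT}\|_{\mathbb{E}}^p+d_6$. Since $V(\tau)<1$, I pick $\gamma_1>0$ with $V(\tau)=e^{-\gamma_1 T}$ and iterate from $\|X_0\|_{\mathbb{E}}^p=\|\xi\|_{\mathbb{E}}^p$ to get $\|X_{iT}\|_{\mathbb{E}}^p\le e^{-\gamma_1 iT}\|\xi\|_{\mathbb{E}}^p+d_6/(1-e^{-\gamma_1 T})$. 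Substituting back into the per-window estimate and converting $e^{-\gamma_1(i+1)T}$ into $e^{-\gamma_1 t}$ at the cost of the bounded factor $e^{\gamma_1(T-\tau)}$ delivers $\mathbb{E}|X(t)|^p\le L_1\|\xi\|_{\mathbb{E}}^p e^{-\gamma_1 t}+k_1$ for $t\ge T-\tau$; the starting window $[0,T-\tau]$ is covered by the boundedness Lemma \ref{lemma:3.1} (with $s=0$, $\eta=\xi$), and the two bounds are merged by enlarging $L_1,k_1$, which establishes \eqref{def-pes-EMgsdde}.

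The main obstacle is purely the bookkeeping of the second paragraph: one must split $d_5$ into its $\|X_{iT}\|_{\mathbb{E}}^p$-proportional part and its constant part, carry the factor $2^{p-1}$ and the exponential from Lemma \ref{lemma:4.8} through, and verify that after replacing the $(2T-\tau)$-half-exponentials by $T$-exponentials the proportional coefficient matches $V(\tau)-\delta$ exactly while the decay term contributes precisely $\delta$. The only genuinely delicate point is keeping the interval-length dependence of $N_1=N_1(s,T)$ consistent throughout the iteration (replacing $(s,T)$ by $(iT,(i+2)T-\tau)$); once the coefficient of $\|X_{iT}\|_{\mathbb{E}}^p$ is pinned to $V(\tau)$, the contraction-and-iteration machinery is identical to that of Theorems \ref{th:4} and \ref{th:7}.
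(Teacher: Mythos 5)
Your proposal is correct and follows essentially the same route as the paper's proof: the same decomposition $\mathbb{E}|X(t)|^p\le 2^{p-1}\mathbb{E}|Y(t;iT,X(iT))|^p+2^{p-1}\mathbb{E}|X(t)-Y(t)|^p$ combined with Lemma \ref{lemma:4.8}, the same use of $2^{p-1}L_2e^{-\gamma_2(T-\tau)}\le\delta$ to pin the coefficient of $\|X_{iT}\|_{\mathbb{E}}^p$ to $V(\tau)$, and the same contraction-and-iteration argument with $V(\tau)=e^{-\gamma_1 T}$. The only (immaterial) deviation is that you cover the initial window $[0,T-\tau]$ via the boundedness Lemma \ref{lemma:3.1}, whereas the paper reuses its main inequality with $i=0$; both yield the required affine bound there.
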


\begin{proof}
Note that $T$ is the multiple of $\tau$ and hence of $\Delta$. By
the flow property of the $G$-EMSDDE \eqref{eq:3}, we can denote
$X(t)=X(t ; 0, \xi)=X\left(t ; i T, X_{i T}\right)$ for arbitrary
$i=0,1,2, \ldots$, where $\|X_{iT}\|_{\mathbb{E}}^{p}=\sup_{iT-\tau
\leq u \leq iT} \mathbb{E}|X(u)|^p$. Moreover, let $Y(t)=Y(t ; i T,
X(i T))$, due to the autonomous property of the $G$-SDE
\eqref{eq:4}, the practical exponential stability
\eqref{def-pes-EMgsde} implies
$$\mathbb{E}|Y(t)|^p \leq L_2 \mathbb{E}|X(iT)|^p e^{-\gamma_2 (t-iT)}+k_2, \ \forall t\geq iT.$$
For any $t \geq i T$, using the elementary inequality and Lemma
 \ref{lemma:4.8}, we have
\begin{eqnarray}\label{ineq:3.10}
&&\mathbb{E}|X(t)|^p\nonumber
\\
&\leq & 2^{p-1} \mathbb{E}|Y(t)|^p+
2^{p-1} \mathbb{E}|X(t)-Y(t)|^p \nonumber\\
&\leq & 2^{p-1} L_2 \mathbb{E}|X(iT)|^p e^{-\gamma_2 (t-iT)}+2^{p-1} k_2+2^{p}(L+\bar{\sigma}^2pL) \nonumber\\
&&\times \left[\tau^{\frac{p}{2}}\left(K_1
\|X_{iT}\|_{\mathbb{E}}^{p}
e^{\frac{p+2p\widehat{L}+\bar{\sigma}^2(p+p^2\widehat{L}+p\widehat{L})}{2}(t-iT)}+N_1(iT,
t)(t-iT)\right) + \tau (K_2\|X_{iT}\|_{\mathbb{E}}^{p}+N_2)\right]
\nonumber\\
&&\times e^{\frac{p+5pL-4L+\bar{\sigma}^2(p+5p^2L-4pL)}{2}(t-iT)}.
\end{eqnarray}
 The definition of $T$ shows directly that
$$
2^{p-1} L_2 e^{-\gamma_2(T-\tau)} \leq \delta .
$$Thus
$$
\begin{aligned}
\sup_{(i+1) T-\tau \leq t \leq(i+2) T-\tau} \mathbb{E}|X(t)|^p \leq
& V(\tau)\|X_{iT}\|_{\mathbb{E}}^{p}+d_6,
\end{aligned}
$$
where $$d_6=2^{p-1}
k_2+2^{p}(L+\bar{\sigma}^2pL)(2\tau^{\frac{p}{2}}TN_1(0,T)+\tau N_2)
e^{(p+5pL-4L+\bar{\sigma}^2(p+5p^2L-4pL))T}.$$

Since $V(\tau)<1$, there exists a constant $\gamma_1>0$ such that
$V(\tau)=e^{-\gamma_1 T}$, then
$$
\sup _{(i+1)T-\tau \leq t \leq(i+2)T-\tau} \mathbb{E}|X(t)|^P \leq
e^{-\gamma_1 T} \|X_{iT}\|_{\mathbb{E}}^{p}+d_6.
$$
By iteration, we can get that
$$
\begin{aligned}
&\sup _{(i+1)T-\tau \leq t \leq(i+2)T-\tau} \mathbb{E}|X(t)|^P \\
& \leq e^{-\gamma_1 T} \sup_{iT-\tau \leq t \leq(i+1)T-\tau} \mathbb{E}|X(t)|^P+d_6 \\
& \leq e^{-\gamma_1 iT} \sup_{T-\tau \leq t \leq 2 T-\tau}
\mathbb{E}|X(t)|^P+d_6(e^{-\gamma_1(i-1)T}+e^{-\gamma_1(i-2)T}+\cdots+e^{-\gamma_1T}+1)\\
& \leq  e^{-\gamma_1 (i+1)T}
\|\xi\|_{\mathbb{E}}^{p}+d_6(e^{-\gamma_1iT}+e^{-\gamma_1(i-1)T}+\cdots+e^{-\gamma_1T}+1)
\\
& \leq  e^{\gamma_1T}
\|\xi\|_{\mathbb{E}}^{p}e^{-\gamma_1((i+2)T-\tau)}+\frac{d_6}{1-e^{-\gamma_1T}}.
\end{aligned}
$$
Using \eqref{ineq:3.10}, for $i=0$, we obtain that
$$
\begin{aligned}
& \sup_{0 \leq t \leq T-\tau}\mathbb{E}|X(t)|^p
\\
\leq & 2^{p-1} L_2 \mathbb{E}|\xi(0)|^p +2^{p-1} k_2 +2^{p} (L+\bar{\sigma}^2pL) \\
&\times  \left[\tau^{\frac{p}{2}}\left(K_1 \|\xi\|_\mathbb{E}^p
\times
e^{\frac{p+2p\widehat{L}+\bar{\sigma}^2(p+p^2\widehat{L}+p\widehat{L})}{2}(T-\tau)}+N_1(0,
T-\tau)(T-\tau)\right) + \tau (K_2\|\xi\|_\mathbb{E}^p+N_2)\right]
\\&\times e^{\frac{p+5pL-4L+\bar{\sigma}^2(p+5p^2L-4pL)}{2}(T-\tau)}
\\ \leq & 2^{p-1} L_2 \mathbb{E}|\xi(0)|^p +
V(\tau)\|\xi\|_\mathbb{E}^p + d_6
\\ \leq & (2^{p-1}L_2e^{\gamma_1 T} +1)\|\xi\|_\mathbb{E}^p e^{-\gamma_1 T}+
d_6.
\end{aligned}
$$
Hence
$$
\mathbb{E}|X(t)|^p \leq L_1\|\xi\|_\mathbb{E}^p e^{-\gamma_1
T}+\frac{d_6}{1-e^{-\gamma_1t}}, \ \forall t \geq 0,
$$
where $L_1=\left(2^{p-1} L_2+1\right) e^{\gamma_1 T}$. The proof is
complete.
\end{proof}

\subsection{$G$-SDDE  shares stability with
$G$-EMSDDE}
\begin{lemma}\label{lemma:4.9} Let $X$ be the solution of the $G$-EMSDDE \eqref{eq:3}
and let $s \geq 0$ be a multiple of $\Delta$. Under  (H1), it holds
that
$$
\sup _{s \leq t \leq T} \mathbb{E}\left|X(t ; s, \eta)-X_{\Delta}(t
; s, \eta)\right|^p \leq d_7\Delta^{\frac{p}{2}}, \quad \forall T>s,
$$
where
$$
\begin{aligned}d_7=&3^{\frac{3p}{2}-2}\widehat{L}^{\frac{p}{2}}\left[\tau^{\frac{p}{2}}+\bar{\sigma}^{2p}\tau^{\frac{p}{2}}
+\left(\frac{p^{p+1}}{2(p-1)^{p-1}}\right)^{\frac{p}{2}}\bar{\sigma}^{p}\right]\Delta^{\frac{p}{2}}
\\
 &
\times\left[1+2\left((\widehat{L}+\bar{\sigma}^2p\widehat{L})(T-s)+
(1+\widehat{L}\tau+\bar{\sigma}^2p\widehat{L}\tau)\|\eta\|_\mathbb{E}^p\right)
e^{\frac{p+3p\widehat{L}-2\widehat{L}+\bar{\sigma}^2(p+2p^2\widehat{L}-p\widehat{L})}{2}(T-s)}\right].
\end{aligned}
$$
\end{lemma}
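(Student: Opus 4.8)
The plan is to mirror the proof of Lemma~\ref{lemma:3.5} almost verbatim, replacing the non-delay frozen coefficients by their delayed counterparts and invoking the moment bound of Lemma~\ref{lemma:3.1} in place of Lemma~\ref{lemma:3.2}. First I would fix $t\in[s,T]$ and, writing $s=n_s\Delta$ with $n_s\in\mathbb{N}$, let $n$ be the integer with $t\in[s+n\Delta,s+(n+1)\Delta)$, so that $X_\Delta(t)=X_{n+n_s}$ and $X_\Delta(t-\tau)=X_{n+n_s-m}$. On this subinterval the integrands in the continuous scheme \eqref{eq:3} are frozen at the $\mathcal{H}_{s+n\Delta}$-measurable values $f(X_{n+n_s},X_{n+n_s-m})$, $g(X_{n+n_s},X_{n+n_s-m})$ and $h(X_{n+n_s},X_{n+n_s-m})$, whence
\[
X(t)-X_\Delta(t)=\int_{s+n\Delta}^t f\,dr+\int_{s+n\Delta}^t g\,d\langle B\rangle(r)+\int_{s+n\Delta}^t h\,dB(r),
\]
with constant integrands.

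Next I would take $p$-th moments, apply $|a+b+c|^p\le 3^{p-1}(|a|^p+|b|^p+|c|^p)$, and estimate the three pieces separately. For the drift the H\"older inequality produces a factor $(t-(s+n\Delta))^p\le\Delta^p$; for the $d\langle B\rangle$ integral Lemma~\ref{lemma:2.1} produces $\bar{\sigma}^{2p}(t-(s+n\Delta))^p\le\bar{\sigma}^{2p}\Delta^p$; and for the $dB$ integral Lemma~\ref{lemma:2.2} in its $p\ge 2$ form produces $C(p)\bar{\sigma}^p(t-(s+n\Delta))^{p/2}\le\big(\tfrac{p^{p+1}}{2(p-1)^{p-1}}\big)^{p/2}\bar{\sigma}^p\Delta^{p/2}$. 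Because the integrands are constant on an interval of length at most $\Delta$, in each case the remaining $\int_{s+n\Delta}^t\mathbb{E}|\cdot|^p\,dr$ is bounded by $\Delta$ times the $p$-th moment of the corresponding frozen coefficient. The essential observation is that the stochastic ($dB$) term carries only $\Delta^{p/2}$, while the drift and quadratic-variation terms carry $\Delta^{p}$; this is the governing order for small $\Delta$ and is precisely what yields the $\Delta^{p/2}$ rate.

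I would then bound the three frozen $p$-th moments uniformly by \eqref{pthlg}, namely $\mathbb{E}|f|^p\vee\mathbb{E}|g|^p\vee\mathbb{E}|h|^p\le 3^{p/2-1}\widehat{L}^{p/2}\big(1+2\sup_{s-\tau\le u\le T}\mathbb{E}|X(u)|^p\big)$, using here that the delayed index $n+n_s-m$ corresponds to a time in $[s-\tau,T]$. Combining the $3^{p-1}$ from the elementary inequality with the $3^{p/2-1}$ from \eqref{pthlg} gives the constant $3^{3p/2-2}$, and using $\Delta\le\tau$ to write $\Delta^p=\Delta^{p/2}\Delta^{p/2}\le\tau^{p/2}\Delta^{p/2}$ factors a common $\Delta^{p/2}$ out of all three terms, producing the bracket $\tau^{p/2}+\bar{\sigma}^{2p}\tau^{p/2}+\big(\tfrac{p^{p+1}}{2(p-1)^{p-1}}\big)^{p/2}\bar{\sigma}^p$. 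Finally I would substitute the explicit bound of Lemma~\ref{lemma:3.1} for $\sup_{s-\tau\le u\le T}\mathbb{E}|X(u)|^p$. Since the resulting estimate is independent of the subinterval index $n$, it holds uniformly for $t\in[s,T]$ and yields the asserted bound, with $d_7$ collecting the combinatorial constant $3^{3p/2-2}\widehat{L}^{p/2}$, the bracket above, and the moment factor furnished by Lemma~\ref{lemma:3.1}.

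As every estimate is uniform in $n$, there is no genuinely hard step: the argument is a routine adaptation of Lemma~\ref{lemma:3.5}. The only points requiring care are (i) keeping the delayed argument inside the range $[s-\tau,T]$ on which Lemma~\ref{lemma:3.1} supplies a moment bound, and (ii) applying Lemma~\ref{lemma:2.2} correctly on the shifted interval $[s+n\Delta,t]$, so that the $dB$ term contributes the governing $\Delta^{p/2}$ factor rather than $\Delta^{p}$.
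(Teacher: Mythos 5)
Your proposal is correct and follows essentially the same route as the paper's own proof of Lemma~\ref{lemma:4.9}: the same decomposition of $X(t)-X_{\Delta}(t)$ over the subinterval $[s+n\Delta,t)$ on which the integrands of \eqref{eq:3} are frozen, the elementary inequality $|a+b+c|^p\le 3^{p-1}(|a|^p+|b|^p+|c|^p)$ combined with the H\"{o}lder inequality and Lemmas~\ref{lemma:2.1} and~\ref{lemma:2.2}, the coefficient bound \eqref{pthlg}, the moment bound of Lemma~\ref{lemma:3.1} (valid on $[s-\tau,T]$, which covers the delayed index), and $\Delta\le\tau$ to extract the common factor $\Delta^{\frac{p}{2}}$. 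As a minor remark, the bound you obtain (a constant times $\Delta^{\frac{p}{2}}$) is exactly what the paper's final display establishes; the additional $\Delta^{\frac{p}{2}}$ appearing inside the stated expression for $d_7$ is evidently a typographical slip in the lemma's statement rather than a discrepancy in your argument.
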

\begin{proof}
  Write $X(t ; s, \eta)=X(t), X_{\Delta}(t ; s, \eta)=X_{\Delta}(t)$. Let $n_s \in
\mathbb{N}$ such that $s=n_s \Delta$, then for arbitrary $t \geq s$,
there exists an $n \in \mathbb{N}$ such that $t \in$ $[(n_s+n)
\Delta, (n_s+n+1) \Delta)$. By \eqref{eq:d1} and \eqref{eq:3}, we
have
$$
\begin{aligned}
X(t)-X_{\Delta}(t) =& X(t)-X_{n+n_s} \\
=& \int_{s+n \Delta}^t f\left(X_{\Delta}(r),
X_{\Delta}(r-\tau)\right) dr+\int_{s+n \Delta}^t
g\left(X_{\Delta}(r), X_{\Delta}(r-\tau)\right) d\langle B\rangle(r)
\\
& +\int_{s+n \Delta}^t h\left(X_{\Delta}(r),
X_{\Delta}(r-\tau)\right) dB(r) .
\end{aligned}
$$
By the similar procedure in the proof of Lemma \ref{lemma:3.5}, one
can arrive at
$$
\begin{aligned}
\mathbb{E}\left|X(t)-X_{\Delta}(t)\right|^p \leq & 3^{p-1}
\Delta^{p-1} \mathbb{E}\int_{s+n \Delta}^t
\left|f\left(X_{\Delta}(r),
X_{\Delta}(r-\tau)\right)\right|^p dr \\
& +3^{p-1}\bar{\sigma}^{2p}\Delta^{p-1} \mathbb{E}\int_{s+n
\Delta}^t \left|g\left(X_{\Delta}(r),
X_{\Delta}(r-\tau)\right)\right|^p dr \\
&+3^{p-1}\left(\frac{p^{p+1}}{2(p-1)^{p-1}}\right)^{\frac{p}{2}}\bar{\sigma}^{p}\Delta^{\frac{p}{2}-1}
\mathbb{E}\int_{s+n \Delta}^t \left|h\left(X_{\Delta}(r),
X_{\Delta}(r-\tau)\right)\right|^pdr.
\end{aligned}
$$
Using \eqref{pthlg} and Lemma \ref{lemma:3.1}, we get
$$
\begin{aligned}
&\mathbb{E}\left|X(t)-X_{\Delta}(t)\right|^p
\\ \leq &
3^{\frac{3p}{2}-2}\widehat{L}^{\frac{p}{2}}\left[\Delta^{p-1}+\bar{\sigma}^{2p}\Delta^{p-1}
+\left(\frac{p^{p+1}}{2(p-1)^{p-1}}\right)^{\frac{p}{2}}\bar{\sigma}^{p}\Delta^{\frac{p}{2}-1}\right]
\\
 &\times\mathbb{E}\int_{s+n \Delta}^t
\left[1+2\left((\widehat{L}+\bar{\sigma}^2p\widehat{L})(r-s)+
(1+\widehat{L}\tau+\bar{\sigma}^2p\widehat{L}\tau)\|\eta\|_\mathbb{E}^p\right)
e^{\frac{p+3p\widehat{L}-2\widehat{L}+\bar{\sigma}^2(p+2p^2\widehat{L}-p\widehat{L})}{2}(r-s)}\right]
dr.
\end{aligned}
$$
Thus
$$
\begin{aligned}
&\sup _{s \leq t \leq T}\mathbb{E}\left|X(t)-X_{\Delta}(t)\right|^p
\\ \leq & 3^{\frac{3p}{2}-2}\widehat{L}^{\frac{p}{2}}\left[\tau^{\frac{p}{2}}+\bar{\sigma}^{2p}\tau^{\frac{p}{2}}
+\left(\frac{p^{p+1}}{2(p-1)^{p-1}}\right)^{\frac{p}{2}}\bar{\sigma}^{p}\right]\Delta^{\frac{p}{2}}
\\
 &
\times\left[1+2\left((\widehat{L}+\bar{\sigma}^2p\widehat{L})(T-s)+
(1+\widehat{L}\tau+\bar{\sigma}^2p\widehat{L}\tau)\|\eta\|_\mathbb{E}^p\right)
e^{\frac{p+3p\widehat{L}-2\widehat{L}+\bar{\sigma}^2(p+2p^2\widehat{L}-p\widehat{L})}{2}(T-s)}\right].
\end{aligned}
$$
The proof is completed.
\end{proof}
\begin{lemma} \label{lemma:4.10} Let $x$ and $X$ be the solution of the $G$-SDDE \eqref{eq:1} and the
$G$-EMSDDE \eqref{eq:3}, respectively. Assume that $s \geq 0$ is a
multiple of $\Delta$. Under (H1), it holds that
$$
\begin{aligned}
&\sup_{s \leq t \leq T} \mathbb{E}|x(t)-X(t)|^p \\ \leq
&3^{\frac{3p}{2}-2}\widehat{L}^{\frac{p}{2}}(4L+4\bar{\sigma}^2pL)
\left[\tau^{\frac{p}{2}}+\bar{\sigma}^{2p}\tau^{\frac{p}{2}}
+\left(\frac{p^{p+1}}{2(p-1)^{p-1}}\right)^{\frac{p}{2}}\bar{\sigma}^{p}\right]\Delta^{\frac{p}{2}}
\\
 &
\times\left[1+2\left((\widehat{L}+\bar{\sigma}^2p\widehat{L})(T-s)+
(1+\widehat{L}\tau+\bar{\sigma}^2p\widehat{L}\tau)\|\eta\|_\mathbb{E}^p\right)
e^{\frac{p+3p\widehat{L}-2\widehat{L}+\bar{\sigma}^2(p+2p^2\widehat{L}-p\widehat{L})}{2}(T-s)}\right]
\\
 & \times e^{\frac{p+8pL-8L+\bar{\sigma}^2(p+8p^2L-8pL)}{2}(T-s)}, \  \forall
T>s.
\end{aligned}
$$
\end{lemma}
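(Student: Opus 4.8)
The plan is to follow the template of Lemma~\ref{lemma:3.6}, treating the delay exactly as in Lemma~\ref{lemma:4.8}, and to feed in the one-step estimate of Lemma~\ref{lemma:4.9} at the very end. Writing $x(t;s,\eta)=x(t)$ and $X(t;s,\eta)=X(t)$, I would first subtract \eqref{eq:3} from the integral form of \eqref{eq:1} to obtain
$$
x(t)-X(t)=\int_s^t\big(f(x(r),x(r-\tau))-f(X_\Delta(r),X_\Delta(r-\tau))\big)\,dr+\cdots,
$$
collecting the analogous $d\langle B\rangle(r)$ and $dB(r)$ integrals built from $g$ and $h$. Applying the $G$-It\^o formula to $|x(r)-X(r)|^p$, taking $\mathbb{E}$ so that the $dB(r)$ martingale term drops, and using the elementary inequality together with Lemma~\ref{lemma:2.1} just as in Lemma~\ref{lemma:3.6}, the right-hand side is controlled by $\tfrac{p}{2}(1+\bar{\sigma}^2)\int_s^t|x(r)-X(r)|^p\,dr$ plus terms of the shape $\int_s^t|x(r)-X(r)|^{p-2}|D(r)|^2\,dr$, where $D(r)$ ranges over the three coefficient differences.

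Next I would invoke (H1) to bound each $|D(r)|^2$ by $L\big(|x(r)-X_\Delta(r)|^2+|x(r-\tau)-X_\Delta(r-\tau)|^2\big)$, and split both arguments through the continuous EM solution via
$$
|x(r)-X_\Delta(r)|^2\le 2|x(r)-X(r)|^2+2|X(r)-X_\Delta(r)|^2
$$
and the analogous inequality for the delayed pair. A Young step (valid since $p\ge2$) turns each mixed product $|x(r)-X(r)|^{p-2}|\cdot|^2$ into $p$-th powers. Two facts then route every term to its correct destination: on $[s-\tau,s]$ both $x$ and $X$ coincide with the common initial datum $\eta$, so $\mathbb{E}|x(r-\tau)-X(r-\tau)|^p$ vanishes for $r\in[s,s+\tau]$ and is otherwise dominated by $\sup_{s\le u\le r}\mathbb{E}|x(u)-X(u)|^p$; and the differences $\mathbb{E}|X(r)-X_\Delta(r)|^p$ and $\mathbb{E}|X(r-\tau)-X_\Delta(r-\tau)|^p$ are exactly the quantities bounded in Lemma~\ref{lemma:4.9} (and vanish when the argument drops below $s$). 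Gathering the coefficients I expect
$$
\sup_{s\le t\le t_1}\mathbb{E}|x(t)-X(t)|^p\le C_1\int_s^{t_1}\sup_{s\le u\le r}\mathbb{E}|x(u)-X(u)|^p\,dr+C_2\int_s^{t_1}\mathbb{E}|X(r)-X_\Delta(r)|^p\,dr,
$$
with $C_1=\tfrac{p+8pL-8L+\bar{\sigma}^2(p+8p^2L-8pL)}{2}$, the exponent appearing in the statement, and $C_2=4L+4\bar{\sigma}^2pL$.

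Finally I would bound the last integral with Lemma~\ref{lemma:4.9} and apply the Gronwall inequality to $\sup_{s\le u\le r}\mathbb{E}|x(u)-X(u)|^p$, which introduces the factor $e^{C_1(T-s)}$; substituting the explicit value of $d_7$ then yields the asserted estimate, essentially $(4L+4\bar{\sigma}^2pL)$ times the Lemma~\ref{lemma:4.9} bound times $e^{C_1(T-s)}$. The main obstacle is the bookkeeping of the two kinds of delayed differences in the middle step: one must be sure that the exact-versus-EM delayed difference is absorbed into the Gronwall integral---using the agreement of $x$ and $X$ on the initial segment $[s-\tau,s]$---while the continuous-versus-step delayed difference is handed to Lemma~\ref{lemma:4.9}, since mishandling either would leave an uncontrolled delayed term and break the Gronwall closure. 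Tracking the precise constants $C_1,C_2$ and reproducing the exact $(T-s)$-dependence carried by $d_7$ is tedious but entirely routine.
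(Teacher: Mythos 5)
Your proposal is correct and follows essentially the same route as the paper: the $G$-It\^o formula applied to $|x-X|^p$, the plus--minus splitting of $|x(r)-X_\Delta(r)|^2$ and its delayed counterpart through $X$, absorption of the delayed exact-versus-EM difference into the running supremum (using agreement of $x$ and $X$ on the initial segment), Lemma~\ref{lemma:4.9} for the two $X-X_\Delta$ terms, and Gronwall, with exactly the constants $C_1=\frac{p+8pL-8L+\bar{\sigma}^2(p+8p^2L-8pL)}{2}$ and $C_2=4L+4\bar{\sigma}^2pL$ that appear in the paper's proof.
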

\begin{proof} Let $x(t)=x(t ; s, \eta), X(t)=X(t ; s, \eta)$, then
$$
\begin{aligned}
x(t)-X(t)= & \int_s^t\left(f(x(r), x(r-\tau))-f(X_{\Delta}(r), X_{\Delta}(r-\tau))\right) dr \\
& +\int_s^t\left(g(x(r), x(r-\tau))-g(X_{\Delta}(r),
X_{\Delta}(r-\tau))\right) d\langle B \rangle (r)
\\
& +\int_s^t\left(h(x(r), x(r-\tau))-h(X_{\Delta}(r),
X_{\Delta}(r-\tau))\right) dB(r)  .
\end{aligned}
$$
Using the $G$-It\^{o}'s formula, the elementary inequality,
 (H1), Lemmas \eqref{lemma:2.1} and \eqref{lemma:2.2},  we
have
$$\begin{aligned}
&\mathbb{E}|x(t)-X(t)|^p\\
\leq &
\frac{p(1+\bar{\sigma}^2)}{2}\mathbb{E}\int_s^t|x(r)-X(r)|^{p}dr
+\left(\frac{pL}{2}+\bar{\sigma}^2\frac{p^2L}{2}\right)\mathbb{E}\int_s^t|x(r)-X(r)|^{p-2}|x(r)-X_{\Delta}(r)|^2dr
\\
&
+\left(\frac{pL}{2}+\bar{\sigma}^2\frac{p^2L}{2}\right)
\mathbb{E}\int_s^t|x(r)-X(r)|^{p-2}|x(r-\tau)-X_{\Delta}(r-\tau)|^2dr.
\end{aligned}$$
With the help of the plus-minus technique, the elementary inequality
and the H\"{o}lder inequality, one can arrive at
$$\begin{aligned}
&\mathbb{E}|x(t)-X(t)|^p
\\ \leq &
\frac{p+\bar{\sigma}^2p}{2}\mathbb{E}\int_s^t|x(r)-X(r)|^{p}dr
\\
& +(pL+\bar{\sigma}^2p^2L) \mathbb{E}\int_s^t|x(r)-X(r)|^{p-2}
(|x(r)-X(r)|^2+|X(r)-X_{\Delta}(r)|^2)dr
\\
& +(pL+\bar{\sigma}^2p^2L)
\mathbb{E}\int_s^t|x(r)-X(r)|^{p-2}(|x(r-\tau)-X(r-\tau)|^2+|X(r-\tau)-X_{\Delta}(r-\tau)|^2)dr
\\ & \leq
\frac{p+8pL-12L+\bar{\sigma}^2(p+8p^2L-12pL)}{2}\mathbb{E}\int_s^t|x(r)-X(r)|^{p}dr
\\
& +(2L+2\bar{\sigma}^2pL)\mathbb{E}\int_s^t|X(r)-X_{\Delta}(r)|^pdr
+(2L+2\bar{\sigma}^2pL)\mathbb{E}\int_s^t|x(r-\tau)-X(r-\tau)|^pdr
\\
& +(2L+2\bar{\sigma}^2pL)
\mathbb{E}\int_s^t|X(r-\tau)-X_{\Delta}(r-\tau)|^pdr.
\end{aligned}$$
Then, for any $t_1 \in[s, T]$, using Lemma \ref{lemma:4.9} , one can
obtain that
$$\begin{aligned}
&\sup _{s \leq t \leq {t_1}} \mathbb{E}|x(t)-X(t)|^p
\\ \leq &
\frac{p+8pL-8L+\bar{\sigma}^2(p+8p^2L-8pL)}{2}\mathbb{E}\int_s^{t_1}|x(r)-X(r)|^{p}dr
\\
& +(4L+4\bar{\sigma}^2pL) \mathbb{E}\int_s^{t_1}(\sup _{s \leq u\leq
r}|X(u)-X_{\Delta}(u)|^p)dr
\\ \leq &
\frac{p+8pL-8L+\bar{\sigma}^2(p+8p^2L-8pL)}{2}\mathbb{E}\int_s^{t_1}|x(r)-X(r)|^{p}dr
\\
& +3^{\frac{3p}{2}-2}\widehat{L}^{\frac{p}{2}}(4L+4\bar{\sigma}^2pL)
\left[\tau^{\frac{p}{2}}+\bar{\sigma}^{2p}\tau^{\frac{p}{2}}
+\left(\frac{p^{p+1}}{2(p-1)^{p-1}}\right)^{\frac{p}{2}}\bar{\sigma}^{p}\right]\Delta^{\frac{p}{2}}
\\
 &
\times\left[1+2\left((\widehat{L}+\bar{\sigma}^2p\widehat{L})(t_1-s)+
(1+\widehat{L}\tau+\bar{\sigma}^2p\widehat{L}\tau)\|\eta\|_\mathbb{E}^p\right)
e^{\frac{p+3p\widehat{L}-2\widehat{L}+\bar{\sigma}^2(p+2p^2\widehat{L}-p\widehat{L})}{2}(t_1-s)}\right],
\end{aligned}$$
whence the desired assertion follows from the Gronwall inequality.
\end{proof}

The theorem below shows that $G$-SDDE can preserve the practical
exponential stability of the corresponding $G$-EMSDDE  providing
that the step size $\Delta$ is small enough.

\begin{theorem}\label{th:12} Let (H1) holds and assume that the $G$-EMSDDE \eqref{eq:3} is practically exponential
stable in $p$-th moment. For any given $\delta \in(0,1)$, let
$$
T=\left(\left\lfloor\frac{\ln \left(2^{p-1} L_1 /
\delta\right)}{\gamma_1 \tau}\right\rfloor+3\right) \tau .
$$
If $\Delta>0$ satisfies
$$\begin{aligned}
W(\Delta):=&\delta+2^{p+2}
3^{\frac{3p}{2}-2}\widehat{L}^{\frac{p}{2}}(L+\bar{\sigma}^2pL)\left[\tau^{\frac{p}{2}}+\bar{\sigma}^{2p}\tau^{\frac{p}{2}}
+\left(\frac{p^{p+1}}{2(p-1)^{p-1}}\right)^{\frac{p}{2}}\bar{\sigma}^{p}\right]
(1+\widehat{L}\tau+\bar{\sigma}^2p\widehat{L}\tau)
\\ &\times
e^{p+3p\widehat{L}-2\widehat{L}+\bar{\sigma}^2(p+2p^2\widehat{L}-p\widehat{L})(T-\tau)}
e^{p+8pL-8L+\bar{\sigma}^2(p+8p^2L-8pL)(T-\tau)}\Delta^{\frac{p}{2}}
<1,
\end{aligned}$$
then the $G$-SDDE \eqref{eq:1} is also practically exponential
stable in $p$-th moment.
\end{theorem}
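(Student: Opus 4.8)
The plan is to follow the same template as the three preceding theorems, most closely Theorem~\ref{th:9}: exploit the flow property of the $G$-SDDE to reduce the global estimate to a one-step contraction over windows of length $T$, compare the exact solution $x$ with the Euler--Maruyama solution $X$ launched from the \emph{same} delayed segment, and then iterate. Since $T$ is a multiple of $\tau$ and hence of $\Delta$, I would write $x(t)=x(t;0,\xi)=x(t;iT,x_{iT})$ for each $i=0,1,2,\dots$ using the flow property, and introduce the companion process $X(t)=X(t;iT,x_{iT})$, i.e.\ the $G$-EMSDDE \eqref{eq:3} started at time $iT$ from the exact segment $x_{iT}$, so that $\|x_{iT}\|_{\mathbb{E}}^p=\sup_{iT-\tau\le u\le iT}\mathbb{E}|x(u)|^p$. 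The assumed practical exponential stability of the $G$-EMSDDE then controls $\mathbb{E}|X(t)|^p\le L_1\|x_{iT}\|_{\mathbb{E}}^p e^{-\gamma_1(t-iT)}+k_1$, while Lemma~\ref{lemma:4.10} (applied with $s=iT$, $\eta=x_{iT}$, which is legitimate because $iT$ is a multiple of $\Delta$) controls $\mathbb{E}|x(t)-X(t)|^p$.

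The second step is to combine these through the elementary inequality $\mathbb{E}|x(t)|^p\le 2^{p-1}\mathbb{E}|X(t)|^p+2^{p-1}\mathbb{E}|x(t)-X(t)|^p$ and take the supremum over the window $(i+1)T-\tau\le t\le(i+2)T-\tau$. On this window $t-iT$ ranges only over the bounded interval $[T-\tau,2T-\tau]$, so the exponential growth factors appearing in Lemma~\ref{lemma:4.10} are frozen at fixed constants and do not accumulate in $i$. The definition of $T$ guarantees $2^{p-1}L_1 e^{-\gamma_1(T-\tau)}\le\delta$, which absorbs the decaying EMSDDE contribution into the constant $\delta$ of $W(\Delta)$; separating the $\|x_{iT}\|_{\mathbb{E}}^p$-proportional part of Lemma~\ref{lemma:4.10} from its $\eta$-free remainder yields the one-step bound
\[
\sup_{(i+1)T-\tau\le t\le(i+2)T-\tau}\mathbb{E}|x(t)|^p\le W(\Delta)\,\|x_{iT}\|_{\mathbb{E}}^p+d_8,
\]
where $d_8$ collects $k_1$, the $N_1,N_2$ terms of Lemmas~\ref{lemma:4.9}--\ref{lemma:4.10}, and the frozen exponential factors, uniformly in $i$.

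The final step is the iteration. Since $W(\Delta)<1$ by hypothesis, I would choose $\lambda_1>0$ with $W(\Delta)=e^{-\lambda_1 T}$ and unroll the recursion, geometrically summing the constant into $d_8/(1-e^{-\lambda_1 T})$ while carrying the leading term $e^{-(i+1)\lambda_1 T}\|\xi\|_{\mathbb{E}}^p$. A separate application of the one-step estimate with $i=0$ handles the initial window $0\le t\le T-\tau$, and combining the two gives $\mathbb{E}|x(t)|^p\le M_1\|\xi\|_{\mathbb{E}}^p e^{-\lambda_1 t}+d_1$ for all $t\ge0$, with an explicit $M_1=(2^{p-1}L_1+1)e^{\lambda_1 T}$-type constant and $d_1=d_8/(1-e^{-\lambda_1 T})$, which is exactly \eqref{def-pes-gsdde}.

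I expect the only genuinely delicate point to be the bookkeeping in the second step rather than any new analytic input: one must (i) keep the contraction factor attached solely to $\|x_{iT}\|_{\mathbb{E}}^p$ so that it equals $W(\Delta)<1$, while the $\eta$-independent part of Lemma~\ref{lemma:4.10} remains in the additive constant $d_8$; (ii) verify that $d_8$ is uniform in $i$, which hinges on the bound $t-iT\le 2T-\tau$ controlling the growth factors; and (iii) track the interval-length dependence of $N_1=N_1(s,T)$ exactly as flagged in the Remark, substituting the current window endpoints for $(s,T)$. All three are mechanical once the window structure is fixed, and no step requires an estimate beyond Lemmas~\ref{lemma:4.9} and~\ref{lemma:4.10} together with the assumed stability of the $G$-EMSDDE.
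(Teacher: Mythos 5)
Your proposal follows the same route as the paper: flow property, one-step contraction over windows of length $T$ obtained by combining the assumed stability of the $G$-EMSDDE with Lemma \ref{lemma:4.10} through the elementary inequality, then iteration and a geometric series. The one place where your bookkeeping diverges from the stated theorem is the anchor point of the comparison. The paper launches the EM solution from time $\tau$ with the exact segment $x_\tau$ (and, in the iteration, effectively from $iT+\tau$ with $x_{iT+\tau}$), so that the interval fed into Lemma \ref{lemma:4.10} for the window $[(i+1)T-\tau,(i+2)T-\tau]$ has length $2(T-\tau)$; after the factor $\tfrac12$ in the exponents of Lemma \ref{lemma:4.10} this produces exactly the $e^{[\cdots](T-\tau)}$ factors appearing in $W(\Delta)$, and the shift by $\tau$ is also why $T$ is defined with ``$+3$'' so that $2^{p-1}L_1e^{-\gamma_1(T-2\tau)}\le\delta$. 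Anchoring at $iT$ with $x_{iT}$ as you propose makes the relevant interval length $2T-\tau$ rather than $2T-2\tau$, so your one-step constant is $e^{(a+b)\tau/2}$ times larger than the stated $W(\Delta)$ and the hypothesis $W(\Delta)<1$ no longer literally closes the contraction; this is repairable by shifting the anchor as the paper does, but as written your constants do not match the theorem. Conversely, the paper's choice forces one extra step you omit: since its iteration is based on $\|x_\tau\|_{\mathbb{E}}^p$, it must invoke Lemma \ref{lemma:3.1} at the end (inequality \eqref{ineq:4.13}) to convert $\|x_\tau\|_{\mathbb{E}}^p$ back into $\|\xi\|_{\mathbb{E}}^p$ and so obtain \eqref{def-pes-gsdde}; your anchoring at $0$ for $i=0$ would make that conversion unnecessary, but only if the window arithmetic above is fixed first.
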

\begin{proof}
For any initial value $\xi$, due to the flow property of the
$G$-SDDE \eqref{eq:1}, we write $x\left(t ; \tau, x_\tau\right)=x(t
; 0, \xi)=x(t)$ and  $X\left(t ; \tau, x_\tau\right)=X(t)$. The
autonomous property and  the practical exponential stability
\eqref{def-pes-EMgsdde} of the $G$-EMSDDE \eqref{eq:3} implies that
$$
\mathbb{E}|X(t)|^p \leq L_1\left\|x_\tau\right\|_\mathbb{E}^p
e^{-\gamma_1(t-\tau)}+k_1, \quad \forall t \geq \tau.
$$
Using Lemma \ref{lemma:4.10}, we get
\begin{eqnarray}\label{ineq:3.11}
\mathbb{E} |x(t)|^p &\leq&  2^{p-1} \mathbb{E}|X(t)|^p+2^{p-1} \mathbb{E}|x(t)-X(t)|^p \nonumber \\
&\leq & 2^{p-1} L_1\left\|x_\tau\right\|_\mathbb{E}^p
e^{-\gamma_1(t-\tau)}+2^{p-1}k_1
\nonumber\\
 && +2^{p+1}
3^{\frac{3p}{2}-2}\widehat{L}^{\frac{p}{2}}(L+\bar{\sigma}^2pL)\left[\tau^{\frac{p}{2}}+\bar{\sigma}^{2p}\tau^{\frac{p}{2}}
+\left(\frac{p^{p+1}}{2(p-1)^{p-1}}\right)^{\frac{p}{2}}\bar{\sigma}^{p}\right]\Delta^{\frac{p}{2}}
\nonumber\\ &&
\times\left[1+2\left((\widehat{L}+\bar{\sigma}^2p\widehat{L})(t-\tau)+
(1+\widehat{L}\tau+\bar{\sigma}^2p\widehat{L}\tau)\|x_\tau\|_\mathbb{E}^p\right)
e^{\frac{p+3p\widehat{L}-2\widehat{L}+\bar{\sigma}^2(p+2p^2\widehat{L}-p\widehat{L})}{2}(t-\tau)}\right]
\nonumber\\&& \times
e^{\frac{p+8pL-8L+\bar{\sigma}^2(p+8p^2L-8pL)}{2}(t-\tau)}.
\end{eqnarray}
 According to the definition of $T$, we have
$ 2^{p-1} L_1 e^{-\gamma_1(T-2\tau)} \leq \delta \ \ \text{and} \
T\geq 2\tau. $ Hence, by a straightforward computation, we deduced
from  \eqref{ineq:3.11} that
$$
\begin{aligned}
 \sup_{T-\tau \leq t \leq 2T-\tau} \mathbb{E}|x(t)|^p
 \leq  W(\Delta)\left\|x_\tau\right\|_\mathbb{E}^p+d_8,
\end{aligned}
$$
where
$$
\begin{aligned}
d_8=& 2^{p-1}k_1 + 2^{p+1}
3^{\frac{3p}{2}-2}\widehat{L}^{\frac{p}{2}}(L+\bar{\sigma}^2pL)\left[\tau^{\frac{p}{2}}+\bar{\sigma}^{2p}\tau^{\frac{p}{2}}
+\left(\frac{p^{p+1}}{2(p-1)^{p-1}}\right)^{\frac{p}{2}}\bar{\sigma}^{p}\right]\Delta^{\frac{p}{2}}
\\ &
\times\left[1+4(\widehat{L}+\bar{\sigma}^2p\widehat{L})(T-\tau)
e^{[p+3p\widehat{L}-2\widehat{L}+\bar{\sigma}^2(p+2p^2\widehat{L}-p\widehat{L})](T-\tau)}\right]
e^{[p+8pL-8L+\bar{\sigma}^2(p+8p^2L-8pL)](T-\tau)}.
\end{aligned}
$$
Since $W(\Delta)<1$, choosing $\lambda_1>0$ such that
$W(\Delta)=e^{-\lambda_1 T}$, we have
$$
\begin{aligned}
 \sup_{T-\tau \leq t \leq 2T-\tau} \mathbb{E}|x(t)|^p  \leq
e^{-\lambda_1 T}\|x_\tau\|_\mathbb{E}^p+d_8.
\end{aligned}
$$
Recalling that $T$ is a multiple of $\tau$, repeating this
procedure, we get that
\begin{eqnarray}\label{ineq:4.11}
\sup_{(i+1) T-\tau \leq t \leq(i+2) T-\tau} \mathbb{E}|x(t)|^p &\leq
& e^{-(i+1) \lambda_1 T}\|x_\tau\|_\mathbb{E}^p+d_8(e^{-\gamma_1i
T}+e^{-\gamma_1(i-1)T}+\cdots+e^{-\gamma_1T}+1) \nonumber\\ &\leq
&e^{\lambda_1(T-\tau)}\|x_\tau\|_\mathbb{E}^pe^{-\lambda_1t}+\frac{d_8}{1-e^{-\gamma_1T}},
\ \forall i \geq 0.
\end{eqnarray}
By \eqref{ineq:3.11}, we  can deduce that
$$
\begin{aligned}
\sup_{\tau \leq t \leq T-\tau} \mathbb{E}|x(t)|^p \leq  (2^{p-1}
L_1+W(\Delta))\|x_\tau\|_\mathbb{E}^p+d_8,
\end{aligned}
$$
which gives
\begin{eqnarray}\label{ineq:4.12}
\sup _{0 \leq t \leq T-\tau} \mathbb{E}\|x(t)\|^p \leq (1+2^{p-1}
L_1)e^{\lambda_1(T-\tau)}\|x_\tau\|_\mathbb{E}^pe^{-\lambda_1t}+d_8.
\end{eqnarray}
Furthermore, using Lemma \ref{lemma:3.1} gives
\begin{eqnarray}\label{ineq:4.13}
\|x_\tau\|_\mathbb{E}^p \leq
 \left[(\widehat{L}+\bar{\sigma}^2p\widehat{L})\tau+
(1+\widehat{L}\tau+\bar{\sigma}^2p\widehat{L}\tau)\|\xi\|_{\mathbb{E}}^{p}\right]
e^{\frac{p+3p\widehat{L}-2\widehat{L}+\bar{\sigma}^2(p+2p^2\widehat{L}-p\widehat{L})}{2}\tau}.
\end{eqnarray}
Then, combining \eqref{ineq:4.11} and \eqref{ineq:4.12} with
\eqref{ineq:4.13}, we obtain that
$$
\begin{aligned}
\mathbb{E}|x(t)|^p \leq M_1\|\xi\|_{\mathbb{E}}^{p} +d_1, \ \forall
t\geq 0.
\end{aligned}
$$
where $M_1=(1+2^{p-1} L_1)
(1+\widehat{L}\tau+\bar{\sigma}^2p\widehat{L}\tau)
e^{\frac{p+3p\widehat{L}-2\widehat{L}+\bar{\sigma}^2(p+2p^2\widehat{L}-p\widehat{L})+2\lambda_1}{2}(T-\tau)}$
 and $d_1=(1+2^{p-1} L_1)
(\widehat{L}\tau+\bar{\sigma}^2p\widehat{L}\tau)
e^{\frac{p+3p\widehat{L}-2\widehat{L}+\bar{\sigma}^2(p+2p^2\widehat{L}-p\widehat{L})+2\lambda_1}{2}(T-\tau)}
+\frac{d_8}{1-e^{-\gamma_1T}}$. The proof is complete.
\end{proof}
\section{Conclusion}
Combining Theorems \ref{th:4}, \ref{th:7}, \ref{th:9} and
\ref{th:12}, we can give a summary of the relationships among
$G$-SDDEs, $G$-SDEs and their EM methods as the following theorem.
\begin{theorem}
Let (H1) holds, if either of the $G$-SDDE \eqref{eq:1}, $G$-SDE
\eqref{eq:2}, $G$-EMSDDE \eqref{eq:3} and $G$-EMSDE \eqref{eq:4} is
 practically exponential stable in $p$-th moment, then also the
 other three are practically exponential stable in $p$-th moment,
 provided that the delay $\tau$ and the step size $\Delta$ are small
 enough.
\end{theorem}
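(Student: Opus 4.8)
The plan is to read the four preceding theorems as a single directed cycle of implications and simply to traverse it. Theorem~\ref{th:4} shows that practical exponential stability in $p$-th moment of the $G$-SDDE \eqref{eq:1} forces the same property for the auxiliary $G$-SDE \eqref{eq:2}; Theorem~\ref{th:7} propagates it from \eqref{eq:2} to the $G$-EMSDE \eqref{eq:4}; Theorem~\ref{th:9} carries it from \eqref{eq:4} to the $G$-EMSDDE \eqref{eq:3}; and Theorem~\ref{th:12} returns it from \eqref{eq:3} back to \eqref{eq:1}. Thus the four systems lie on a closed loop \eqref{eq:1} $\Rightarrow$ \eqref{eq:2} $\Rightarrow$ \eqref{eq:4} $\Rightarrow$ \eqref{eq:3} $\Rightarrow$ \eqref{eq:1}, in which every arrow is one of the four established theorems and no further analytic estimate is required.

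Given this loop, I would argue by cases on which of the four systems is assumed stable. Because the cycle visits each of its four vertices exactly once before closing, starting at the hypothesised system and applying the three consecutive theorems encountered along the cycle produces the practical exponential stability in $p$-th moment of the remaining three. For example, if the $G$-SDE \eqref{eq:2} is the stable one, I apply Theorem~\ref{th:7}, then Theorem~\ref{th:9}, then Theorem~\ref{th:12} to obtain stability of \eqref{eq:4}, \eqref{eq:3} and \eqref{eq:1} in turn; the other three entry points are handled identically after relabelling where the cycle is entered. The only content of this step is the bookkeeping of the stability constants $(M_i,\lambda_i,d_i)$ and $(L_i,\gamma_i,k_i)$ generated at successive arrows.

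The genuinely delicate point --- and the step I expect to be the main obstacle --- is the simultaneous satisfiability of the four smallness hypotheses $R(\tau)<1$, $U(\Delta)<1$, $V(\tau)<1$ and $W(\Delta)<1$ attached to the arrows. These are coupled, because the stability constants delivered by one theorem enter the threshold of the next along the cycle; in particular the decay rate and bound produced at a step-size arrow (e.g.\ $\gamma_2,L_2$ in Theorem~\ref{th:7}) blow up as $\Delta\downarrow 0$, and one must check that this does not defeat the smallness condition of the following delay arrow. The plan is to exploit two structural features that were built into the statements for exactly this purpose: first, the horizons $T$ are defined through floor expressions of the form $\lfloor \ln(\cdot)/(\lambda\,\cdot)\rfloor$, which keep each $T$ bounded even when the inherited rate diverges; second, each threshold carries an explicit polynomial damping factor $\tau^{p/2}$ or $\Delta^{p/2}$, so that once the horizons are controlled every condition reduces to ``$\delta$ plus a term that vanishes as $\tau,\Delta\downarrow 0$''. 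Setting $\Delta=\tau/m$ and tracking the constants around the finite chain should then yield a threshold $\tau^{*}>0$ (and a corresponding minimal step-ratio $m$) below which all three conditions relevant to the chosen starting vertex hold at once; the careful verification that the inherited constants never overwhelm the $\tau^{p/2},\Delta^{p/2}$ damping is where the real work lies. Assembling these finitely many constraints produces a single admissible pair $(\tau,\Delta)$ for which the entire chain of implications is valid, and the claimed equivalence follows.
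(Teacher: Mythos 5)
Your proposal coincides with the paper's own treatment: the paper gives no separate argument for this theorem beyond the single sentence ``Combining Theorems \ref{th:4}, \ref{th:7}, \ref{th:9} and \ref{th:12}\ldots'', i.e.\ exactly the cyclic chain \eqref{eq:1} $\Rightarrow$ \eqref{eq:2} $\Rightarrow$ \eqref{eq:4} $\Rightarrow$ \eqref{eq:3} $\Rightarrow$ \eqref{eq:1} that you traverse. Your additional discussion of the simultaneous satisfiability of $R(\tau)<1$, $U(\Delta)<1$, $V(\tau)<1$ and $W(\Delta)<1$ addresses a coupling that the paper passes over in silence, and your resolution (each threshold is $\delta$ plus a term damped by $\tau^{p/2}$ or $\Delta^{p/2}$, so one fixes $\tau$ first and then the step ratio $m$) is sound and, if anything, more careful than the source.
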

So far as we know, related works on the topic of practical stability
equivalence are very few. In this paper, enlightened by the recent
work \cite{zhsongLiu23} due to Zhang et al., we have investigated
the equivalence of practical exponential stability among $G$-SDDEs,
$G$-SDEs and their EM methods under the Lipschitz assumption. It
should be pointed out that the model and computations are nontrivial
due to the uncertainty of $G$-Brownian motion. We hope that the
results of this paper will play a fundamental role in future
research.

\label{lastpage-01}

\begin{thebibliography}{99}

\footnotesize


\bibitem{Bao21} Z. Bao, J. Tang, Y. Shen, Equivalence of $p$-th moment stability between stochastic
differential delay equations and their numerical methods, Stat.
Probabil. Lett. 168 (2021) 108952.


\bibitem{Caraballo23} T. Caraballo,  F. Ezzine, M. A.
Hammami., Practical stability with respect to a part of the
variables of stochastic differential equations driven by
$G$-Brownian motion, J. Dyn. Control Syst. 29 (2023) 1-19.

\bibitem{Caraballo14}  T. Caraballo, M. A. Hammami, L. Mchiri, Practical asymptotic
stability of nonlinear stochastic evolution equations. Stoch. Anal.
Appl. 32 (2014) 77-87.

\bibitem{Caraballo17} T. Caraballo, M. A. Hammami, L. Mchiri, Practical exponential
stability of impulsive stochastic functional differential equations,
Syst. Cont. Lett. 109 (2017) 43-48.

\bibitem{DFFM19}  S. Deng, C. Fei, W. Fei, X. Mao, Stability equivalence between the stochastic
differential delay equations driven by $G$-Brownian motion and the
Euler-Maruyama method,  Appl. Math. Lett. 96 (2019) 138-146.

\bibitem {DHP11} L. Denis, M. Hu, and S. Peng,  Function spaces and capacity related to a sublinear expectation:
application to $G$-Brownian motion pathes, Potential Anal.  34
(2011) 139-161.


\bibitem{Dlala07}M. Dlala, M.A. Hammami, Uniform exponential practical stability of
impulsive perturbed systems, J. Dyn. Control Syst. 13 (2007)
373-386.

\bibitem{FFY19}  C. Fei, W. Fei, L. Yan, Existence and Stability of
Solutions to Highly Nonlinear Stochastic Differential Delay
Equations Driven by $G$-Brownian Motion, Appl. Math. J. Chinese
Univ. 34 (2019) 184-204.

\bibitem{FFMY22} C. Fei, W. Fei, X. Mao, L. Yan, Delay-dependent Asymptotic
Stability of Highly Nonlinear Stochastic Differential Delay
Equations Driven by G-Brownian Motion, J. Franklin I. 359 (2022)
4366-4392.

\bibitem{FengLiu94} Z. Feng, Y. Liu, Practical stability analysis for stochastic
time-delay systems, Appl. Math. Comput. 63 (1994) 265-279.

\bibitem{Gao09} F. Gao, Pathwise properties and homeomorphic flows for stochastic
differential equations driven by $G$-Brownian motion, Stochastic
Process. Appl. 119 (2009) 3356-3382.

\bibitem{HuPeng09} M. Hu,  S. Peng, On the representation theorem of $G$-expectations and
 paths of $G$-Brownian motion, Acta Math. Appl.
Sin. Engl. Ser. 25(2009) 539-546.


\bibitem{Higham02} D.J. Higham, X. Mao, A.M. Stuart,  Strong convergence of
Euler-type methods for nonlinear stochastic differential equations,
SIAM J. Numer. Anal. 40 (2002) 1041-1063.

\bibitem{Higham03} D.J. Higham, X. Mao, A.M. Stuart, Exponential mean-square
stability of numerical solutions to stochastic differential
equations. LMS J. Comput. Math. 6 (2003) 297-313.

\bibitem{Hasm12} R. Hasminskii, Stochastic Stability of Differential
Equations, Second edition, Springer-Verlag, Berlin, Heidel-berg,
2012.

\bibitem{Hu14} L. Hu, Y. Ren, T. Xu, $p$-Moment stability of solutions to stochastic di?erential equations driven by G-Brownian motion,
Appl. Math. Comput. 230 (2014) 231-237.

\bibitem{Hu20} L. Hu, Y. Ren, R. Sakthivel, Stability of square-mean almost automorphic mild solutions to impulsive stochastic differential equations
driven by $G$-Brownian motion, Internat. J. Control 93 (2020)
3016-3025.

\bibitem{Hu96} Y. Hu, Semi-implicit Euler-Maruyama scheme for stiff stochastic
equations, in: H. Koerezlioglu (Ed.), Stochastic Analysis and
Related Topics V: The Silvri Workshop, Progress in Probability, vol.
38, Birkhauser, Boston, 1996, pp. 183-202.

\bibitem{Kloeden92} P.E. Kloeden, E. Platen, Numerical Solution of Stochastic Differential Equations,
 Applications of Mathematics, Springer, Berlin, 1992.

\bibitem{LLM90} V. Lakshmikantham, S. Leela and A. Martyuk, Practical stability of
nonlinear systems, World Scientific Publishing Co. Pte. Ltd.,
Singapore, 1990.

\bibitem{LiYang18} G. Li, Q. Yang,  Convergence and asymptotical stability of numerical
solutions for neutral stochastic delay differential equations driven
by $G$-Brownian motion, Comput. Appl. Math., 37 (2018) 4301-4320.

\bibitem{LLL16} X. Li,  X. Lin, Y. Lin, Lyapunov-type conditions and
stochastic differential equations driven by $G$-Brownian motion, J.
Math. Anal. Appl., 439 (2016) 235-255.

\bibitem {LiP11} X. Li, S. Peng, Stopping times and related It\^{o}'s calculus with $G$-Brownian motion,
 Stochastic Process. Appl., 121 (2011) 1492-1508.

\bibitem{LMY19} X. Li, X. Mao, G. Yin, Explicit numerical approximations for
stochastic differential equations in finite and infinite horizons:
truncation methods, convergence in $p$th moment and stability, IMA J
Numer. Anal., 39 (2019) 847-892.

\bibitem{LiuLu24} C. Liu, W. Lu, Convergence of the
Euler-Maruyama method for Stochastic Differential Equations driven
by $G$-Brownian motion, preprint, 2024.

\bibitem{LLD2018} L. Liu,, M. Li, F. Deng, Stability equivalence between the
neutral delayed stochastic differential equations and the
Euler-Maruyama numerical scheme. Appl. Numer. Math. 127 (2018)
370-386.

\bibitem{Mao2007} X. Mao, Exponential stability of equidistant Euler-Maruyama
approximations of stochastic differential delay equations. J.
Comput. Appl. Math. 200 (2007) 297-316.

\bibitem{Mao07} X. Mao, Stochastic Differential Equations and Their
Applications, 2nd Edition, Chichester, Horwood Pub, 2007.

\bibitem{MaoYuan06} X. Mao, C. Yuan, Stochastic Differential Equations with Markovian
Swtching, Imperial College Press, London, 2006.

\bibitem{Mao2015} X. Mao, Almost sure exponential stability in the numerical
simulation of stochastic differential equations. SIAM J. Numer.
Anal. 53 (2015) 370-389.

\bibitem{Peng07} S. Peng, $G$-expectation, $G$-Brownian motion and related stochastic
calculus of It\^{o} type. In: Stochastic analysis and applications,
Abel Symp., Vol.2, ed. by F.E., Benth, G. Di Nunno, T. Lindstrom, B.
{\O}ksendal, T. Zhang, Springer-Verlag, Berlin, 2007, 541-567


\bibitem{Peng08} S. Peng,  Multi-dimensional $G$-Brownian motion and related stochastic calculus
under $G$-expectation, Stochastic Process. Appl.,  118 (2008)
2223-2253.

\bibitem{Peng10} S. Peng, Nonlinear expectations and stochastic calculus under uncertainty with robust
CLT and G-Brownian motion, Springer, Berlin, Heidelberg, 2019.

\bibitem{ren17} Y. Ren, X. Jia, R. Sakthivel. The $p$-th moment stability of
solutions to impulsive stochastic differential equations driven by
G-Brownian motion,Applicable Analysis, 96 (2017) 988-1003.

\bibitem{ren18}
Y Ren, W Yin, R Sakthivel. Stabilization of stochastic differential
equations driven by G-Brownian motion with feedback control based on
discrete-time state observation, Automatica, 95 (2018) 146-151.

\bibitem{ren18} Y. Ren, W. Yin, D. Zhu, Exponential stability of SDEs driven by
G-Brownian motion with delayed impulsive effects: average impulsive
interval approach, Discrete Contin. Dyn. Syst.-B 23 (8) (2018)
3347-3360,

\bibitem{Soner11} H. Soner, N. Touzi, J. Zhang,  Martingale representation theorem under $G$-expectation, Stochastic
Process. Appl., 121 (2011) 265-287.

\bibitem{Song11}  Y. Song,  Some properties on $G$-evaluation and its applications
to G-martingale decomposition, Sci. China  Math., 54 (2011) 287-300.

\bibitem{Song12}  Y. Song, Uniqueness of the representation for $G$-martingales
with finite variation, Electron. J. Probab., 17 (2012) 1-15.

\bibitem{Sucec87} J. Sucec, Practical stability analysis of finite difference equations
by the matrix method, Int. J. Numer. Methods Eng. 24 (1987) 679-687.

\bibitem{RF17} R. Ullah and F. Faizullah,  On existence and approximate solutions for stochastic differential
equations in the framework of $G$-Brownian motion, Eur. Phys. J.
Plus,  132 (2017): 435.

\bibitem{LiYang19} Q. Yang,  G. Li, Exponential stability of $\theta$-method for
stochastic differential equations in the $G$-framework. J. Comput.
Appl. Math. 350 (2019) 195-211.

\bibitem{YLWW20} Q. Yao, P. Lin, L. Wang, Y. Wang, Practical exponential stability of impulsive stochastic reaction-diffusion systems with delays, IEEE
Trans. Cybern. 52 (2020) 1-11.

\bibitem{YZ20} Z. Yao, X. Zong, Delay-dependent stability of a class of stochastic delay systems driven by $G$-Brownian motion,
IET Control Theory \& Applications 14 (2020) 834-842.

\bibitem{Yuan06} C. Yuan, W. Glover, Approximate solutions of stochastic differential delay equations
with Markovian switching, J. Comput. Appl. Math. 194 (2006) 207-226.

\bibitem{Yhy21} H. Yuan, Some Properties of Numerical Solutions for Semilinear Stochastic
Delay Differential Equations Driven by $G$-Brownian Motion, Math.
Probl. Eng. 30 (2021) 1-26.

\bibitem{YuanZhu24} H. Yuan, Q. Zhu, Practical stability of the analytical and numerical
solutions of stochastic delay differential equations driven by
G-Brownian motion via some novel techniques,  Chaos Soliton. Fract.
183 (2024) 114920.

\bibitem{ZhCh12} D. Zhang, Z. Chen, Exponential stability for
stochastic differential equation driven by $G$-Brownian motion,
Appl. Math. Lett. 25 (2012) 1906-1910.

\bibitem{zhao08} P. Zhao, Practical stability, controllability and optimal control of
stochastic Markovian jump systems with time-delays, Automatica 44
(2008) 3120-3125.

\bibitem{zhsongLiu23} Y. Zhang, M. Song, M. Liu, Equivalence of stability among stochastic differential
equations, stochastic differential delay equations, and their
corresponding Euler-Maruyama methods, Discrete Contin. Dyn. Syst.-B
28 (2023) 4761-4779.

\bibitem{Zhu22} D. Zhu, Practical exponential stability of stochastic
delayed systems with $G$-Brownian motion via vector $G$-Lyapunov
function. Math. Comput. Simulat. 199 (2022) 307-316.

\bibitem{ZhuHuang20} Q. Zhu, T. Huang, Stability analysis for a class of
stochastic delay nonlinear systems driven by G-Brownian motion,
Syst. Control Lett. 140 (2020) 104699.

\bibitem{ZYL22} D. Zhu, J. Yang, X. Liu, Practical stability of impulsive
stochastic delayed systems driven by $G$-Brownian motion. J.
Franklin I. 359 (2022) 3749-3767.
\end{thebibliography}
\end{document}